\numberwithin{equation}{section}
\theoremstyle{plain}
\newtheorem{theorem}{Theorem}[section]
\newtheorem{lemma}[theorem]{Lemma}
\newtheorem{proposition}[theorem]{Proposition}
\theoremstyle{definition}
\newtheorem{definition}[theorem]{Definition}
\title[Diffusion-Reaction Epidemic Model with a Free Boundary]{Diffusion-Reaction Epidemic Model with a Free Boundary}
\author{Aesol Jeon}
\address{The National Institute for Mathematical Sciences, Daejeon 34047, Korea}
\email{aesol@nims.re.kr}
\author{Ki-Ahm Lee}
\address{Department of Mathematical Sciences, Seoul National University, Seoul 08826, Korea}
\email{ki-ahm@snu.ac.kr}
\subjclass[2020]{Primary 35Q92 Secondary 35K57, 35R35}
\keywords{Epidemic model, SEIS model, Free boundary problem, Reaction-diffusion systems, Dynamics, Reproductive number}
\thanks{This work was supported by the Ministry of Education of the Republic of Korea and the National Research Foundation of Korea (RS-2025-00515707)}
\begin{document}
\begin{abstract}
	This study investigates an SEIS PDE model with a free boundary, which captures the dynamics of epidemic transmission, including diseases like COVID-19. This parabolic PDE system is analyzed in a rotationally symmetric domain, and the existence and uniqueness of the local solution are established through the straightening lemma. Furthermore, the existence and uniqueness of the global solution are established under specific conditions on the diffusion coefficients. Then the model introduces the basic reproductive number, $R_0$, which provides sufficient conditions for determining whether the disease will vanish or spread. Notably, when $R_0<1$, the disease-free equilibrium(DFE) is shown to be globally stable, and when $R_0>1$, the DFE is unstable. Lastly, we investigate the convergence speed of solutions by applying nonlinear elliptic eigenvalue techniques to the associated parabolic PDE system.
\end{abstract}

\maketitle

%
%
\section{Introduction}
Nowadays, many infectious diseases occur, such as SARS and malaria. Thus, mathematical modeling of epidemics and their quantitative and qualitative prediction have received much attention in various areas, especially in mathematical biology. 
Furthermore, about four years ago, there was a great pandemic that threatened the whole world, from Asia to America. The name of the disease is COVID-19, and it affected our whole lives, for example, almost all offices and factories were shut down, so it became quite a significant loss of human and material resources. We had a great interest in predicting this disease and thought that mathematical modeling would make our lives easier. In contrast to earlier epidemics, the COVID-19 period provided abundant data suitable for modeling and prediction, which encouraged extensive mathematical and theoretical research by scholars from diverse disciplines. Moreover, we expect that it can evaluate the impact of interventions such as vaccination or social distancing.

A variety of models, commonly referred to as compartmental systems in epidemiology, have been proposed and applied \cite{Kermack_1991}. Each model has its own characteristics, as outlined in the following table. Before proceeding with research about the COVID-19 model, we briefly summarize what work has been done so far.
By going through this process, we conclude that SEIS is the most appropriate model for the COVID-19 virus, since the virus has no immunity after recovery and a latent period before infection.
	\begin{table}[h] 
	\centering
	\captionsetup{position=below}
	\caption{Various epidemic models}
		\begin{tabular}{ccc}
			\hline
Model name & Special Property                                                                                  & Example of disease \\ \hline
SI         & No recovery                                                                                       & AIDS               \\
SIR        & Permanent immunity after recovery                                                                 & Influenza          \\
SIS        & No immunity after recovery                                                                        & H1N5               \\
SEIR       & \begin{tabular}[c]{@{}c@{}}Latent period where the infected\\  is not infectious yet\end{tabular} & Measles            \\
SEIS       & SIS+latent period                                                                                 & Malaria            \\ \hline
		\end{tabular}
\end{table}
			
Until now, most of the epidemic models--the well-known compartmental epidemic model--have assumed that the spread of infectious diseases depends only on time, which leads to a simple system of ODEs. However, in the real world, disease transmission is also affected by the spatial distribution of pathogens and hosts. Thus, we constructed an epidemic model considering both temporal and spatial differences. Although our ultimate goal of this research is to deal with the model in general $n$-dimensional space, this first attempt only introduces the rotational symmetric case.
	
To construct a spatio-temporal SEIS model, we first examined the classical SEIS model, which accounts only for temporal dynamics. Spatial effects were incorporated by adding diffusion terms to each compartment. In addition, to control the spread rate of the infection, a free boundary was introduced to restrict the region in which infected and exposed individuals can exist. Without such a boundary, the disease would rapidly spread over the entire domain due to the properties of the heat equation. Subsequently, we analyzed the proposed model to establish the existence and uniqueness of solutions, which is a standard objective in PDE. To solve the diffusion-reaction model with a free boundary, we followed previous studies on the SIR diffusion-reaction model \cite{Kim_2013}, as the SIR framework remains one of the most fundamental and widely used epidemic models. Furthermore, many mathematical biologists have focused on the asymptotic behavior of such systems, since it provides valuable insights into disease persistence and extinction. Accordingly, we defined the reproductive number for our model, computed its value, and demonstrated that the asymptotic behavior depends on this parameter. 
	
\subsection{Organization of Paper}
The remainder of this paper is organized as follows. In section 2, we introduce the SEIS model and its generalization to a spatially dependent PDE, including the incorporation of a free boundary. The process of construction of a free boundary is explained in the context. Section 3 discusses the existence and uniqueness of solutions, demonstrating the global existence and uniqueness under specific conditions on the diffusion coefficients by using uniform estimates. In section 4, we analyze the asymptotic behavior of the model by defining the reproductive number as the bifurcation parameter for disease transmission and examining the equilibrium states based on this number. This section also contains the convergence speed of the convergent solution by using elliptic eigenvalue problems and energy minimizer.
	
\subsection{Notation}
Prior to the main results, we introduce some useful notation that will be used throughout the paper. It is well known that H\"older continuity of solutions means uniform continuity with respect to
\begin{equation*}
	\rho(X,Y)=\sqrt{\lvert t-s \rvert}+\lvert x-y \lvert
\end{equation*}
where $X=(t,x)$ and $Y=(s,y)$. It means that solutions are always twice more regular with respect to the space variable than with respect to the time variable.
Thus following \cite{Krylov_1996} and \cite{Boucksom_2013}, we will write $u\in C^{0,\alpha}$ for functions that are $\alpha$-H\"older continuous in $x$ and $\frac{\alpha}{2}$-H\"older continuous in $t$.

Usually, parabolic equations are considered in the cylindrical domain. Therefore let $D_T=D \times (0,T]$ for time $T \in (0,\infty]$ be a time-space domain and $\alpha \in (0,1]$. Then $u \in C^{\alpha}(\overline{D_T})$ means that there exists $C>0$ such that for all $(t,x), (s,y) \in D$,
\begin{equation*}
	\lvert u(t,x)-u(s,y)\rvert \leq C(\lvert t-s \rvert ^\frac{\alpha}{2}+\lvert x-y \rvert ^\alpha).
\end{equation*}

And $u \in C^{1+\alpha}(\overline{D_T})$ if $u$ is $\frac{\alpha +1}{2}$-H\"older continuous only in $t$ and $Du$ is $\alpha$-H\"older continuous. Similarly, $u \in C^{2+\alpha}(\overline{D_T})$ if $\frac{\partial u}{\partial t}$ and $D^2u$ are $\alpha$-H\"older continuous.
Likewise, define H\"older semi-norms,
\begin{equation*}
\begin{aligned}
	&[u]_{C^{\alpha}(\overline{D_T})}=\sup_{X,Y\in D_T,\ X \neq Y} \frac{\lvert u(X)-u(Y) \rvert}{{\rho (X,Y)}^\alpha},
	&&[u]_{C_t^{\alpha}(\overline{D_T})}=\sup_{\substack{t,s \in (0,T),\ t\neq s,\\ x\in D}}\frac{\lvert u(t,x)-u(s,x) \rvert}{\lvert t-s \rvert ^\frac{\alpha}{2}},\\
	&[u]_{C^{1+\alpha}(\overline{D_T})}=[u]_{C^{1+\alpha}_t(\overline{D_T})}+[Du]_{C^\alpha(\overline{D_T})},
	&&[u]_{C^{2+\alpha}(\overline{D_T})}=\left[\frac{\partial u}{\partial t}\right]_{C^\alpha(\overline{D_T})}+[D^2u]_{C^\alpha(\overline{D_T})},
\end{aligned}
\end{equation*}
and H\"older norms,
\begin{equation*}
\begin{gathered}
	\lVert u \rVert_{C^0(\overline{D_T})}=\sup_{X \in D_T} \lvert u(X) \rvert, \\
	\lVert u \rVert_{C^{1+\alpha}(\overline{D_T})}=\lVert Du \rVert_{C^0(\overline{D_T})}+\lVert u\rVert_{C^0(\overline{D_T})}+[u]_{C^{1+\alpha}(\overline{D_T})}, \\
	\lVert u \rVert_{C^{2+\alpha}(\overline{D_T})}=\lVert D^2u \rVert_{C^0(\overline{D_T})}+\lVert Du\rVert_{C^0(\overline{D_T})}+\left \lVert \frac{\partial u}{\partial t} \right \rVert_{C^0(\overline{D_T})}+\lVert u\rVert_{C^0(\overline{D_T})}+[u]_{C^{2+\alpha}(\overline{D_T})}.
\end{gathered}
\end{equation*}

However, since we will usually deal with domains either inside the free boundary or in the whole space, we use
\begin{equation*}
\begin{alignedat}{6}
&\overline{\Omega_{hT}} &&= [0,h(t)]\times[0,T],\qquad
&&\overline{\Omega_{\infty T}} &&= [0,\infty)\times[0,T],\qquad
&&\overline{\Omega_{hT}} &&= [0,h(t))\times(0,T],\\[6pt]
&\overline{\Omega_{\infty T}} &&= [0,\infty)\times(0,T],\qquad
&&\Omega_{hT} &&= (0,h(t))\times(0,T],\qquad
&&\Omega_{\infty T} &&= (0,\infty)\times(0,T],\\[6pt]
&\Omega_h &&= (0,h(t))\times(0,\infty),\qquad
&&\Omega_{\infty} &&= (0,\infty)\times(0,\infty),\qquad
&&\Gamma_h &&= \{h(t)\}\times(0,\infty),\\[6pt]
&\Lambda_h &&= (0,h(t))\times[T_0,\infty),\qquad
&&\overline{\Lambda_\infty} &&= [0,\infty)\times[T_0,\infty).
\end{alignedat}
\end{equation*}
At this point, since the boundary data plays an important role on each parabolic domain, one must carefully examine the statements of theorems with respect to each domain.
Since the model system of equations to be addressed later is a parabolic-type PDE, we will use the Hölder norms defined earlier.

\section{SEIS Diffusion-Reaction Model with Free Boundary}
\

It is a well-known fact that the classical SEIS epidemic model is constructed by 
	\begin{equation} \label{SEIS ODE}
		\left \{ \begin{aligned}
			S'(t)&=A-\alpha I(t)S(t)-\mu_1 S(t)+r_2 I(t)+r_1 E(t),
			\\
			E'(t)&=(1-p)\alpha I(t)S(t)-\beta_1 E(t)-r_1 E(t) -\mu_2 E(t),
			\\
			I'(t)&=p \alpha I(t)S(t)+\beta_1 E(t)-r_2 I(t)-\mu_3 I(t),
			\\
			S(0)&=N, \ E(0)\geq 0, \ I(0)\geq 0
		\end{aligned} \right.
	\end{equation}
	in the biological meaning region $D=\{(S,E,I) \in \mathbb{R}^3 :S>0,E \geq 0,I \geq 0\}$ \vspace{0.5cm} where the coefficient of the equation is that\\	 
	\
	
	\begin{center} \begin{tabular}{c|l}
	$\mu_1,\mu_2, \mu_3$      & death rate for physical disease of the exposed and the infected                                       \\ \hline
	$N$          & original population size                                              \\ \hline
	$A$          & influx or recruitment of the susceptible and the exposed              \\ \hline
	$r_1, r_2$ & treatment cure rate of latent and active disease                      \\ \hline
	$\beta_1$    & breakdown rate from latent to an active condition                        \\ \hline
	$\alpha$   & incidence of meeting                                                  \\ \hline
	$p$          & a portion of infection instantaneously degeneration in an active condition
	\end{tabular}\end{center}
	followed by \cite{MENG_2013}.
	
	\begin{center} \begin{tikzpicture}
            \begin{scope}[every node/.style={circle,thick,draw,minimum size=10mm,font=\large}]
                \node (A) at (-4,-1) [draw=none] { };
                \node (S) at (-2,-1) {S};
                \node (E) at (1.5,-1) {E};
                \node (I) at (5,-1) {I};
                \node (S2) at (-2,-3.5) [draw=none] { };
                \node (E2) at (1.5,-3.5) [draw=none] { };
                \node (I2) at (5,-3.5) [draw=none] { };
            \end{scope}

            \begin{scope}[>={Stealth[black]},
                          every node/.style={fill=white,rectangle,minimum size=0.5mm,font=\small},
                          every edge/.style={draw=black, very thick}]
                \path [->] (A) edge node [below] {$A$} (S);
                \path [->] (S) edge node [below]{$(1-p)\alpha SI$} (E);
                \path [->] (E) edge node [below] {$\beta_1 I$} (I);
                \path [->] (S) edge node [left] {$\mu_1 S$} (S2);
                \path [->] (E) edge node [right]{$\mu_2 E$} (E2);
                \path [->] (I) edge node [left]{$\mu_3 I$} (I2);
            \end{scope}
            \begin{scope}[>={Stealth[black]},
                          every node/.style={circle,minimum size=0.5mm,font=\small},
                          every edge/.style={draw=black, very thick}]
                          \path [->] (I) edge [bend right=20] node [above,yshift=-1mm] {$r_2I$} (S);
                \path [->] (S) edge [out=40,in=140] node [above,yshift=-1.5mm]  {$p\alpha SI$} (I);
                \path [->] (E) edge [bend left=50] node [below,yshift=1.5mm]{$r_1E$} (S);
                \end{scope}

        \end{tikzpicture} \end{center}
	
	If we assume that $\mu_1=\mu_2=\mu_3=\mu$ for some constant $\mu>0$, i.e., death rates of individuals of other groups are equal, we can get
	\begin{equation*}
		(S+E+I)'=\mu\left(\frac{A}{\mu}-S-E-I\right),
	\end{equation*}
	so it follows that
	\begin{equation*}
		\lim_{t\to \infty}(S+E+I)=\frac{A}{\mu}.
	\end{equation*}
	
	Then the system of equations \cref{SEIS ODE} is restricted in $\Omega \in \mathbb{R}_+^3$ where
	\begin{equation} \label{omega}
		\Omega = \left\{ (S,E,I)\in \mathbb{R}_+^3 : S>0, E\geq 0, I \geq 0, S+E+I\leq \frac{A}{\mu}\right\}.
	\end{equation}
	
	We want to generalize our ODE to PDE with diffusion terms since we want to consider an additional spatial change. So we can construct the equations

	\begin{equation} \label{diffusion SEIS}
		\left \{ \begin{aligned}	
		&S_t-d_1\Delta S=A-\alpha IS-\mu_1 S+r_2 I+r_1E,&&x \in \Omega,\\
		&E_t-d_2\Delta E=(1-p)\alpha IS-\beta_1 E-r_1E-\mu_2 E,&&x \in \Omega,\\
		&I_t-d_3\Delta I=p\alpha IS+\beta_1 E-r_2I-\mu_3 I, &&x\in \Omega,\\
		\end{aligned} \right.
 	\end{equation}
 	for $t>0$, with initial conditions
 	\begin{equation} \label{cond1}
 		S(x,0)=S_0(x),  \quad E(x,0)=E_0(x), \quad I(x,0)=I_0(x), \quad x \in \overline \Omega
 	\end{equation}
 	and boundary conditions
 	\begin{equation} \label{cond2}
 		\partial_nS=\partial_nE=\partial_nI=0,\quad x\in \partial \Omega,\ t>0,
 	\end{equation}
 	where $\Omega$ is a fixed and bounded domain in $\mathcal{R}^n$ with smooth boundary $\partial \Omega$.
 	
 	Then we can guess that the solution $I(r,t)$ of system \cref{diffusion SEIS} with \cref{cond1} and \cref{cond2} are always positive for any time $t>0$ whatever the nonnegative nontrivial initial data is. It can be interpreted as the disease spreading to the whole area immediately, even when the infection is confined to a small part of the area in the beginning. Then, it is not the observed fact that disease always spreads gradually. So, we consider the epidemic model with a free boundary, which describes the spreading frontier of the disease.
 	 
 	For these reasons, we will observe the properties of frontier to construct a proper free boundary of the model. Thus when we imagine a spread of disease, infection occurs equally regardless of directions, for simplicity, we assume the environment is radially symmetric. Since the diffusivity of the exposed and the infected are $d_2$ and $d_3$ respectively, and the number of the flowing exposed and the flowing infected on any point $x\in \Omega$ are $\partial E/ \partial x$ and $\partial I/ \partial x$ respectively, the number of the infected flowing across the boundary $x=h(t)$ at time $t$ to $t+\Delta t$ is 
 	\begin{equation} \label{time difference free boundary}
 		-\left(d_2 \frac{\partial E}{\partial x}+d_3 \frac{\partial I}{\partial x}\right) \Delta t.
 	\end{equation}
 	The exposed and the infected disperse from $x=h(t)$ to $x=h(t+\Delta t)$ during the time interval $[t,t+\Delta t]$ and the size of the population decides the distance $h(t+\Delta t)-h(t)$. Thus we can suppose that \cref{time difference free boundary} equals to
 	\begin{equation*}
 		f(h(t+\Delta t)-h(t)).
 	\end{equation*}
 	Then we know that the function $f$ is increasing and $f(0)=0$. Biologically, this means that the size is increasing with respect to the moving length. Using the Taylor expansion of the function $f$ yields that
 	\begin{equation*}
 		f(h(t+\Delta t)-h(t))=f'(0)(h(t+\Delta t)-h(t))+\textstyle\frac{1}{2}f''(0)(h(t+\Delta t)-h(t))^2+\cdots,
 	\end{equation*}
 	so
 	\begin{equation*}
 		-\left(d_1 \frac{\partial E}{\partial x}+d_2 \frac{\partial I}{\partial x}\right)=f'(0) \frac{h(t+\Delta t)-h(t)}{\Delta t}+\frac{1}{2}f''(0)\frac{(h(t+\Delta t)-h(t))^2}{\Delta t}+ \cdots.
 	\end{equation*}
 	If $\Delta t \to 0$, we can deduce the result that
 	\begin{equation*}
 		-\left(d_2 \frac{\partial E}{\partial x}+d_3 \frac{\partial I}{\partial x}\right)=f'(0)h'(t).
 	\end{equation*}
 	
 	Now, $f'(0)>0$ by the definition of $f$ and depends on the diffusivity of the exposed $d_2$ and of the infected $d_3$. Furthermore, we can know that the bigger $f'(0)$, the easier the infected disperse to the unpolluted area.
 	
 	Let $\mu = \frac{d_3}{f'(0)}$ and $\beta = \frac{d_2}{f'(0)}$. Then we can induce the conditions on the free boundary
 	\begin{equation*}
 		-\left(\beta\frac{\partial E}{\partial x}+\mu \frac{\partial I}{\partial x}\right)= h'(t) \quad \text{and} \quad E=I=0.
 	\end{equation*}
 	
 	Supplementally, if all populations do not emigrate from inside, then there is no flux crossing the boundary, so the Neumann boundary conditions arise to describe these phenomena
 	\begin{equation*}
 		\frac{\partial S}{\partial r}=\frac{\partial E}{\partial r}=\frac{\partial I}{\partial r}=0 \quad \text{on } \partial \Omega.
 	\end{equation*}
 	
 	Then we will investigate the behavior of the positive solution ($S\left(r,t\right)$, $I\left(r,t\right)$, $R\left(r,t\right)$; $h\left(t\right)$) with $r=\lvert x \rvert$ and $x \in \mathbb{R}^n$ in the following problem :
 	\begin{equation} \label{free boundary SEIS}
 	\begin{cases}
			S_t-d_1\Delta S=A-\alpha IS-\mu_1 S+r_2 I+r_1E, &(r,t)\in\Omega_\infty,\\
			E_t-d_2\Delta E=(1-p)\alpha IS-(\beta_1 +r_1+\mu_2) E, &(r,t) \in \Omega_h,\\
			I_t-d_3\Delta I=p\alpha IS+\beta_1 E-(r_2+\mu_3) I, &(r,t) \in \Omega_h,\\
			h'(t)=-\beta E_r(h(t),t)-\mu I_r(h(t),t), &t>0\\
			E(r,t)=I(r,t)=0, &(r,t)\notin \Omega_h,\\
			S_r(0,t)=E_r(0,t)=I_r(0,t)=0 &t>0,
	\end{cases}
	\end{equation}
	without the last equation, with initial conditions
	\begin{equation} \label{initial cond}
		\left \{ \begin{aligned}
			&h(0)=h_0>0, \\
			&S(r,0)=S_0,\ E(r,0)=E_0,\ I(r,0)=I_0, \qquad r\geq0, \\
		\end{aligned} \right.
	\end{equation}
 	where $\Delta_r w = w_{rr}+\frac{n-1}{r}w_r$. It can be easily obtained by using spherical coordinate change and rotational symmetry of $x$. Moreover, the initial functions $S_0, E_0,$ and $I_0$ are assumed to be nonnegative and satisfy
 	\begin{equation} \label{cond for initial cond}
 		\left \{ \begin{aligned}
 			&S_0 \in C^2([0,+\infty)),\\
 			&E_0, I_0 \in C^2([0,h_0)),\\
 			&E_0(r)=I_0(r)=0, &&r \in[h_0,+\infty),\\
 			&I_0(r)>0, && r \in [0,h_0).
 		\end{aligned} \right.
 	\end{equation}

\section{Well--posedness of the Model of Rotationally Symmetric Case}

When developing a mathematical model, researchers--particularly in the study of partial differential equations--first aim to establish the existence of solutions, even without knowing their explicit form. Subsequent analysis usually addresses properties such as uniqueness and regularity, which indicate the model's well-posedness and mathematical significance. In line with this approach, we first prove the short-time existence of a solution to our model, that is, we show that there exists a time $T_0$ such that the solution exists on $[0,T_0]$. We then investigate the uniqueness and regularity of the solution and the free boundary, and later aim to extend the existence time by deriving uniform estimates. The reinfection phenomenon complicates the derivation of such estimates; hence, we impose additional conditions on the diffusion coefficients to address this issue.

\subsection{Local Well--posedness}

	\begin{theorem}\label{shorttime}
		For any given $(S_0,E_0,I_0)$ satisfying \cref{cond for initial cond} and any $\gamma \in (0,1)$, there is a $T>0$ such that problem \cref{free boundary SEIS} with \cref{initial cond} admits a unique bounded solution \begin{equation*}
			(S, E, I; h) \in C^{1+\gamma}(\overline\Omega_T) \times [C^{1+\gamma}(\overline\Omega_{hT})]^2 \times C^{1+\gamma/2}([0,T]).
			\end{equation*}
			Moreover,
			\begin{equation*}
			\lVert S \rVert_{C^{1+\gamma}(\overline\Omega_T)}+\lVert E \rVert _{C^{1+\gamma}(\overline\Omega_{hT})} + \lVert I \rVert _{C^{1+\gamma}(\Omega_{hT})}+\Vert h \rVert _{C^{1+\gamma/2}([0,T])} \leq C.
	\end{equation*}
	Here $C$ and $T$ only depend on $h_0, \gamma, \lVert S_0 \rVert , \lVert E_0 \rVert $ and $\lVert I_0 \rVert$.
	\end{theorem}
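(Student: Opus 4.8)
The plan is to eliminate the unknown boundary by a straightening change of variables, to solve the resulting semilinear parabolic system on a \emph{fixed} domain by parabolic $L^p$/Schauder theory, and to recover the free boundary via a contraction argument for the Stefan-type relation $h'=-\beta E_r-\mu I_r$. The singular coefficient in $\Delta_r w=w_{rr}+\frac{n-1}{r}w_r$, together with $S_r(0,t)=E_r(0,t)=I_r(0,t)=0$, is harmless at the origin: a radial $C^2$ function on $[0,R)$ satisfying the Neumann condition at $r=0$ is the restriction of a $C^2$ radial function on the ball $B_R\subset\mathbb R^n$ (in particular $E_0'(0)=I_0'(0)=0$), so each scalar equation below is a genuine uniformly parabolic problem for which the standard theory applies with constants depending only on $n$, the ellipticity, $\gamma$, the domain, and the coefficient norms. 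To straighten, fix $\delta\in(0,h_0/4)$ and $\zeta\in C^\infty([0,\infty))$ with $\zeta\equiv1$ on $[h_0-\delta,h_0+\delta]$ and $\operatorname{supp}\zeta\subset(h_0-2\delta,h_0+2\delta)$, and for a curve $h$ with $h(0)=h_0$ and $\lVert h-h_0\rVert_{C^0}$ small set $y=\Psi(r,t):=r+\zeta(r)\bigl(h_0-h(t)\bigr)$. Then each $\Psi(\cdot,t)$ is a $C^\infty$ diffeomorphism of $[0,\infty)$ fixing $0$, equal to the identity off $(h_0-2\delta,h_0+2\delta)$ (in particular near $r=0$, so the even-extension above is unaffected) and sending $h(t)$ to $h_0$, with $\Psi$, $\Psi^{-1}$ and the map $h\mapsto\Psi$ controlled by $\lVert h-h_0\rVert_{C^{1+\gamma/2}([0,T])}$. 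Writing $S(r,t)=\widetilde S(\Psi(r,t),t)$ and likewise for $E,I$, the problem \eqref{free boundary SEIS}--\eqref{initial cond} becomes
\[
\widetilde S_t=d_1\,a\,\widetilde S_{yy}+b\,\widetilde S_y+A-\alpha\widetilde I\widetilde S-\mu_1\widetilde S+r_2\widetilde I+r_1\widetilde E
\]
on $\Omega_{\infty T}$, with the analogous equations for $\widetilde E,\widetilde I$ on the fixed cylinder $(0,h_0)\times(0,T]$ under $\widetilde E=\widetilde I=0$ at $y=h_0$ and Neumann conditions at $y=0$, together with the ODE $h'(t)=-\beta\,p_1(t)\,\widetilde E_y(h_0,t)-\mu\,p_2(t)\,\widetilde I_y(h_0,t)$; here $a>0$ is bounded below, $a,b,p_1,p_2$ are explicit in $\Psi$ and its derivatives, depend only on $h$ (and $b$ also on $h'$), reduce to $a\equiv1,b\equiv0,p_i\equiv1$ when $h\equiv h_0$, and — the crucial point — stay $L^\infty$-bounded with $a$ of small oscillation, \emph{uniformly}, as long as $\lVert h-h_0^*\rVert_{C^{1+\gamma/2}}$ is bounded and $T$ is small, where $h_0^*(t):=h_0+h_0't$ and $h_0':=-\beta E_0'(h_0)-\mu I_0'(h_0)$.

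For a constant $M>0$ fixed below and $T\in(0,1]$, put
\[
\mathcal D_T=\bigl\{\,h\in C^{1+\gamma/2}([0,T]):h(0)=h_0,\ h'(0)=h_0',\ \lVert h-h_0^*\rVert_{C^{1+\gamma/2}([0,T])}\le M\,\bigr\},
\]
which, \emph{with the weaker metric of} $C^1([0,T])$, is a complete metric space (it is bounded in $C^{1+\gamma/2}$ and closed under $C^1$-convergence, by Arzel\`a--Ascoli and lower semicontinuity of the Hölder seminorm). For $h\in\mathcal D_T$ the straightened system has a unique bounded solution $(\widetilde S,\widetilde E,\widetilde I)$, obtained by the standard fixed-point argument for semilinear parabolic systems — the reaction terms being polynomial, hence locally Lipschitz, and the a~priori $C^0$ bounds coming from the maximum principle and the boundedness of $S_0,E_0,I_0$ — and the linear $L^p$/Schauder estimates yield
\[
\lVert\widetilde S\rVert_{C^{1+\gamma}}+\lVert\widetilde E\rVert_{C^{1+\gamma}}+\lVert\widetilde I\rVert_{C^{1+\gamma}}\le C_1
\]
on the respective fixed domains, where $C_1$ depends only on $h_0,\gamma,\lVert S_0\rVert,\lVert E_0\rVert,\lVert I_0\rVert$ — \emph{not} on $M$, since for $T$ small the coefficients are controlled independently of $M$ — and is uniform in $T\le1$; the only compatibility needed for this $C^{1+\gamma}$ regularity is $E_0(h_0)=I_0(h_0)=0$ and $E_0'(0)=I_0'(0)=0$, which hold by \eqref{cond for initial cond} and radial symmetry. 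Define $\mathcal T\colon\mathcal D_T\to C^{1+\gamma/2}([0,T])$ by
\[
\mathcal T(h)(t):=h_0+\int_0^t\bigl(-\beta\,p_1(s)\,\widetilde E_y(h_0,s)-\mu\,p_2(s)\,\widetilde I_y(h_0,s)\bigr)\,ds;
\]
unwinding the change of variables, a fixed point of $\mathcal T$ is exactly a solution of \eqref{free boundary SEIS}--\eqref{initial cond}.

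Since $p_i(0)=1$ one has $(\mathcal T h)'(0)=h_0'$ for all $h\in\mathcal D_T$, and the estimate above gives $\lVert(\mathcal T h)'-h_0'\rVert_{C^{\gamma/2}([0,T])}\le C_2C_1$ once $T$ is small, with $C_2=C_2(\beta,\mu)$; choosing $M:=3C_2C_1$ and then shrinking $T$ yields $\mathcal T(\mathcal D_T)\subset\mathcal D_T$. For $h_1,h_2\in\mathcal D_T$ with associated solutions $(\widetilde S_i,\widetilde E_i,\widetilde I_i)$, subtracting the transformed equations produces parabolic systems for the differences with \emph{zero} initial data whose right-hand sides are sums of (differences of coefficients)$\times$(bounded derivatives of the second solution) and (differences of the reaction terms); combining the elementary smoothing bound $\lVert\nabla w\rVert_{C^0([0,T])}\le C\sqrt T\,\lVert f\rVert_{C^0([0,T])}$, valid for $w_t-Lw=f$ with $w(\cdot,0)=0$, with $\lVert h_1-h_2\rVert_{C^0([0,T])}\le T\lVert h_1'-h_2'\rVert_{C^0([0,T])}$, and absorbing the coupling, one obtains
\[
\lVert\mathcal T h_1-\mathcal T h_2\rVert_{C^1([0,T])}\le C\sqrt T\,\lVert h_1-h_2\rVert_{C^1([0,T])},
\]
a contraction for $T$ small. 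Banach's theorem then gives a unique $h\in\mathcal D_T$, hence a unique bounded $(S,E,I;h)$ of the stated regularity with norm $\le C_1+h_0+M$; uniqueness among \emph{all} solutions follows because any solution with the asserted regularity satisfies $h\in\mathcal D_{T'}$ for $T'$ small (as $h(0)=h_0$ and $h'$ is continuous with $h'(0)=h_0'$), hence agrees with the fixed point there, after which a standard continuation argument finishes. (Nonnegativity of $S,E,I$ and positivity of $I$ on $\{r<h(t)\}$, and thus $h'\ge0$, then follow a~posteriori from the maximum principle.)

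The main obstacle is this last step. One must run the whole scheme in the weaker $C^1([0,T])$ metric, on a set that is complete only because it is a bounded ball of the stronger space $C^{1+\gamma/2}$; one must keep every constant in the solvability step independent of the radius $M$, which is precisely what forces one to exploit that the straightened coefficients stay close to the constant-coefficient heat operators, uniformly in $h\in\mathcal D_T$, once $T$ is small; and one must extract the decisive factor $\sqrt T$ in the contraction bound from the vanishing at $t=0$ of the differences of both the solutions and the coefficients. A secondary difficulty, handled already in the straightening step, is that the change of variables must flatten the moving interval carrying $E,I$ while remaining a smooth diffeomorphism of the whole half-line carrying $S$, so that the coupled system stays uniformly parabolic across $r=h(t)$; the cut-off $\zeta$ is exactly what reconciles these two requirements.
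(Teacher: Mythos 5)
Your proposal is correct in substance but organizes the fixed-point argument differently from the paper. Both proofs begin with the same straightening device (your $\zeta$ is the paper's $\xi$ in \cref{straightening lem}), and both close by a small-time contraction. The difference is in what the contraction acts on: the paper runs a single Banach iteration on the full quadruple $(u,v,w;h)\in\Gamma_T=U_T\times V_T\times W_T\times H_T$, freezing the reaction terms so that each step is a \emph{linear} parabolic solve plus an integration for $\widetilde h$, and measures everything in the $L^\infty\times C^1$ metric, extracting the smallness from factors $T^{\gamma/2}$ and $T^{(1+\gamma)/2}$ supplied by the H\"older-in-time seminorms of quantities vanishing at $t=0$. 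You instead reduce to a fixed point on the scalar unknown $h$ alone: for each candidate $h$ you solve the full semilinear system on the fixed cylinder, then update $h$ through the Stefan relation. Your route (in the spirit of Chen--Friedman) is conceptually cleaner -- the free boundary is isolated as the only genuinely nonlocal unknown, and uniqueness among all regular solutions plus continuation come out more transparently -- at the price of needing well-posedness of the semilinear system as a separate inner step. The paper's one-shot iteration avoids that inner step and keeps every estimate at the level of linear equations with prescribed right-hand sides.

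Two points in your write-up need repair, though neither is fatal. First, the claim that $C_1$ is independent of $M$ is not justified: the drift coefficient $b$ of the straightened operator contains $h'(t)$ itself, which on $\mathcal D_T$ is bounded only by $\lvert h_0'\rvert+M$ and is \emph{not} made small by shrinking $T$; so the $L^p$/Schauder constant a priori depends on $M$, and the choice $M:=3C_2C_1$ is circular as written. The paper avoids this by fixing the radius of the ball in the derivative to $1$ (the condition $\lVert h'-h^*\rVert_{C([0,T])}\le 1$ in $H_T$) so that the coefficient bound is universal; you should do the same. Second, the smoothing bound $\lVert\nabla w\rVert_{C^0}\le C\sqrt{T}\,\lVert f\rVert_{C^0}$ is not directly applicable in the contraction step, because the forcing in the difference equation contains terms of the form $(a(h_1)-a(h_2))\,\widetilde S_{2,yy}$, and the second derivatives of the frozen solution are controlled only in $L^p$ (via the $W^{2,1}_p$ estimate behind the $C^{1+\gamma}$ bound), not in $C^0$. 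The correct mechanism -- which is the one the paper uses -- is the $L^p\to C^{1+\gamma}$ estimate combined with the vanishing of the difference at $t=0$, which yields a factor $T^{\gamma/2}$ rather than $\sqrt T$; the contraction closes just the same.
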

To prove the theorem, we have to address the free boundary as a function of time $t$. To simplify this process, we first straighten the free boundary, a common technique for handling free boundaries. By applying certain diffeomorphisms, the free boundary can be transformed into a fixed boundary in time $t$.

\begin{lemma} \label{straightening lem}
	A free boundary $h(t)$ of the system of equations \eqref{free boundary SEIS} with \cref{initial cond} is diffeomorphic to $h_0$.
\end{lemma}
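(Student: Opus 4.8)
The plan is to straighten the front $r=h(t)$ by an explicit change of variables that pins it to the fixed level $y=h_0$ while leaving the symmetry point $r=0$ and the far field untouched, and then to check that this map is a diffeomorphism of the space--time cylinders for $T$ small. Fix $\zeta\in C^{\infty}([0,\infty))$ with $\zeta(y)=1$ for $|y-h_0|\le h_0/4$, $\zeta(y)=0$ for $|y-h_0|\ge h_0/2$ and $\|\zeta'\|_{\infty}\le 6/h_0$, and set
\begin{equation*}
	r=\Phi(t,y):=y+\zeta(y)\bigl(h(t)-h_0\bigr),\qquad (y,t)\in[0,\infty)\times[0,T].
\end{equation*}
Then $\Phi(t,h_0)=h(t)$, so the free boundary becomes $\{y=h_0\}$; $\Phi(t,0)=0$ because $\zeta(0)=0$, so the Neumann/symmetry condition stays put at $y=0$; and $\Phi(t,y)=y$ for $y\ge 3h_0/2$, so the equation for $S$ on the unbounded component is unchanged away from a compact set and its singular coefficient $\tfrac{n-1}{r}$ is untouched near the origin. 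Using a single such transformation, compatible with both the bounded interval $(0,h(t))$ carrying $E,I$ and the half-line carrying $S$, is the reason for the cutoff instead of the simpler rescaling $y=h_0\,r/h(t)$.

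Next I would verify the diffeomorphism property. Since $h\in C^1([0,T])$ with $h(0)=h_0$, for $T$ small --- equivalently, as long as $\max_{[0,T]}|h(t)-h_0|<h_0/6$, which is precisely the regime enforced in the contraction argument behind \Cref{shorttime} --- one has
\begin{equation*}
	\Phi_y(t,y)=1+\zeta'(y)\bigl(h(t)-h_0\bigr)\ \geq\ 1-\|\zeta'\|_{\infty}\,|h(t)-h_0|\ >\ 0,
\end{equation*}
and in fact $\Phi_y\ge\tfrac12$ after possibly shrinking $T$. Hence for each fixed $t$ the map $y\mapsto\Phi(t,y)$ is a strictly increasing $C^{\infty}$ bijection of $[0,\infty)$ onto itself; by the inverse function theorem its inverse $r\mapsto\Psi(t,r)$ is $C^{\infty}$ in $r$, and joint regularity of $\Psi$ in $(t,r)$ is inherited from that of $\Phi$, hence from $h$. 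Therefore $(t,r)\mapsto(t,\Psi(t,r))$ is a diffeomorphism carrying $\overline{\Omega_{hT}}$ onto $[0,h_0]\times[0,T]$ and $[0,\infty)\times[0,T]$ onto itself, which is the assertion of the lemma.

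It is worth recording what this transformation produces, as that is what the proof of \Cref{shorttime} will use. Writing $\widetilde S(y,t)=S(\Phi(t,y),t)$ and similarly for $E$ and $I$, the chain rule turns $S_r$, $S_t$ and $\Delta_r S$ into $\widetilde S_y/\Phi_y$, $\widetilde S_t-(\Phi_t/\Phi_y)\,\widetilde S_y$, and a second-order operator with principal coefficient $\Phi_y^{-2}$ together with lower-order terms built from $\Phi_y,\Phi_{yy},\Phi_t$ and $\tfrac{n-1}{\Phi}$. Since $\Phi_t=\zeta(y)\,h'(t)$ while $\Phi_y,\Phi_{yy}$ depend only on $\zeta$ and $h(t)$, every new coefficient lies in $C^{\gamma}$ once $h\in C^{1+\gamma/2}$, and the operator stays uniformly parabolic for $T$ small because $\Phi_y$ is close to $1$. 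The Stefan-type law becomes $h'(t)=-\beta\,\widetilde E_y(h_0,t)-\mu\,\widetilde I_y(h_0,t)$ because $\zeta'(h_0)=0$ forces $\Phi_y(t,h_0)=1$, and \cref{initial cond} and \cref{cond for initial cond} carry over verbatim since $\Phi(0,\cdot)$ is the identity.

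The only genuine obstacle is the one just flagged: $\Phi$ is a diffeomorphism only while $h$ does not drift too far from $h_0$, so the straightening is legitimately a \emph{local-in-time} construction. This is not a defect --- it meshes with the short-time existence proof, where one works on a time interval so small, and in a closed ball of $C^{1+\gamma/2}$ around the constant $h_0$, that $|h(t)-h_0|<h_0/6$ holds a priori --- but it does mean the lemma must be read as assigning to each admissible candidate front $h$ its straightening diffeomorphism, rather than producing a single map valid for all $t>0$.
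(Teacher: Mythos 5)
Your proposal is correct and follows essentially the same route as the paper: a fixed cutoff supported near $h_0$, the shift $r=y+\zeta(y)(h(t)-h_0)$, and invertibility from $1+\zeta'(y)(h(t)-h_0)>0$ while $|h(t)-h_0|$ stays below a fixed fraction of $h_0$ (the paper uses $h_0/8$ with $|\xi'|<5/h_0$; your constants differ immaterially). Your added remarks on the transformed coefficients and the local-in-time caveat are consistent with how the paper uses the lemma inside the proof of Theorem~\ref{shorttime}.
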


\begin{proof}
	We first straighten the free boundary. Construct $\xi(s)$ be a function in $C^3([0, \infty))$ satisfying
		\begin{equation*}
		\left \{ \begin{aligned}
			&\xi(s)=1, &&\text{\ if \ }\lvert s-h_0 \rvert < \frac{h_0}{8},
			\\
			&\xi(s)=0, &&\text{\ if \ }\lvert s-h_0 \rvert > \frac{h_0}{2},
			\\
			&\lvert \xi '(s) \rvert < \frac{5}{h_0}, &&\text{\ for all\ } s.
		\end{aligned} \right.
		\end{equation*}
		Consider the transformation
		\begin{equation*}
			(y,t) \mapsto (x,t), \text{ where } x=y+\xi(\lvert y \rvert )\left(h(t)-h_0 \frac{y}{\lvert y \rvert}\right), \ y \in \mathbb{R}^n,
		\end{equation*}
		which leads to the transformation
		\begin{equation*}
			(s,t) \mapsto (r,t)\text{ with } r=s+\xi(s)(h(t)-h_0),\ 0 \leq s < \infty.
		\end{equation*}
		As long as
		\begin{equation*}
			\lvert h(t)-h_0 \rvert \leq \frac{h_0}{8},
		\end{equation*}
		the above transformation $x \rightarrow y$ is a diffeomorphism from $\mathbb{R}^n$ onto $\mathbb{R}^n$ and the transformation $s \rightarrow r$ is also a diffeomorphism from $[0,+\infty)$ onto $[0,+\infty)$. So $s$ is radially symmetric like $r$ and we can define $\Delta_s w = w_{ss}+ \frac{n-1}{s}w_s$. Moreover, it changes the free boundary $r=h(t)$ to fixed boundary $s=h_0$.
\end{proof}

By applying \cref{straightening lem}, we transform the free boundary problem into one with a fixed boundary, where the transformed equation still retain information about the original free boundary. The proof of \cref{shorttime} is then obtained with the help of \cref{straightening lem}.
	\begin{proof}
		By using \cref{straightening lem}, free boundary $h(t)$ is changed to $h_0$. Now, set $u,v,$ and $w$ such that
		\begin{align*}
			S(r,t)=S(s+\xi(s)(h(t)-h_0),t) &\eqqcolon u(s,t),
			\\
			E(r,t)=E(s+\xi(s)(h(t)-h_0),t) &\eqqcolon v(s,t),
			\\
			I(r,t)=I(s+\xi(s)(h(t)-h_0),t) &\eqqcolon w(s,t).
		\end{align*}
		Then if we differentiate both sides with respect to $s$
		\begin{equation*}
				(1+\xi'(s)(h(t)-h_0))S_r=u_s,
		\end{equation*}
		so we can obtain
		\begin{equation*}
			S_r=\frac{u_s}{1+\xi'(s)(h(t)-h_0)}.
		\end{equation*}
		And if we differentiate both sides twice with respect to $s,$
		\begin{equation*}
			\begin{aligned}
			&\xi''(s)(h(t)-h_0)S_r+(1+\xi'(s)(h(t)-h_0))\frac{\partial}{\partial s}S_r=u_{ss}
			\\
			&\xi''(s)(h(t)-h_0)S_r+(1+\xi'(s)(h(t)-h_0))\frac{\partial r}{\partial s}\frac{\partial}{\partial r}S_r=u_{ss}
			\\
			&\xi''(s)(h(t)-h_0)S_r+(1+\xi'(s)(h(t)-h_0))^2S_{rr}=u_{ss}, 
			\end{aligned}
		\end{equation*}
		so we can obtain
		\begin{equation*}
			S_{rr}=\frac{u_{ss}(1+\xi'(s)(h(t)-h_0)-\xi''(s)(h(t)-h_0)u_s}{(1+\xi'(s)(h(t)-h_0))^3}.
		\end{equation*}
		Furthermore, if we differentiate both sides with respect to $t$,
		\begin{equation*}
			\begin{aligned}
				&\xi(s)h'(t)S_r+S_t=u_t
				\\
				&\frac{\xi(s)h'(t)u_s}{1+\xi'(s)(h(t)-h_0)}+S_t=u_t,
			\end{aligned}
		\end{equation*}
		so we can obtain
		\begin{equation*}
			S_t=u_t-\frac{\xi(s)h'(t)u_s}{1+\xi'(s)(h(t)-h_0)}.
		\end{equation*}
		By using the result above, we can change the equation of $S$, first line of \cref{free boundary SEIS}, to
		\begin{equation*}
			\begin{aligned}
				S_t-d_1\Delta_rS &=S_t-d_1\left(S_{rr}+\frac{n-1}{r}S_r\right)
				\\
				&=u_t-\frac{\xi(s)h'(t)u_s}{1+\xi'(s)(h(t)-h_0)}
				-d_1\frac{u_{ss}(1+\xi'(s)(h(t)-h_0)-\xi''(s)(h(t)-h_0)u_s}{(1+\xi'(s)(h(t)-h_0))^3}
				\\
				&\quad -d_1\frac{n-1}{r}\frac{u_s}{1+\xi'(s)(h(t)-h_0)}
				\\
				&=u_t-\frac{\xi(s)h'(t)u_s}{1+\xi'(s)(h(t)-h_0)}-\frac{d_1 u_{ss}}{(1+\xi'(s)(h(t)-h_0)^2}
				\\
				&\quad +\frac{d_1\xi''(s)(h(t)-h_0)u_s}{(1+\xi'(s)(h(t)-h_0)^3}-d_1 \frac{n-1}{r}\frac{u_s}{1+\xi'(s)(h(t)-h_0)}
				\\
				&=u_t-\frac{\xi(s)h'(t)u_s}{1+\xi'(s)(h(t)-h_0)}-\frac{d_1 \Delta_s u}{(1+\xi'(s)(h(t)-h_0)^2}
				+\frac{d_1\xi''(s)(h(t)-h_0)u_s}{(1+\xi'(s)(h(t)-h_0)^3}\\
				&\quad -d_1 \frac{n-1}{r}\frac{u_s}{1+\xi'(s)(h(t)-h_0)}
				+d_1\frac{n-1}{s}\frac{u_s}{(1+\xi'(s)(h(t)-h_0)^2}
				\\
				&=u_t-\frac{\xi(s)h'(t)u_s}{1+\xi'(s)(h(t)-h_0)}-\frac{d_1 \Delta_s u}{(1+\xi'(s)(h(t)-h_0)^2}
				+\frac{d_1\xi''(s)(h(t)-h_0)u_s}{(1+\xi'(s)(h(t)-h_0)^3}
				\\
				&\quad +d_1(n-1)\frac{-s\xi'(s)(h(t)-h_0)+\xi(s)(h(t)-h_0)}{(1+\xi'(s)(h(t)-h_0))^2s(s+\xi(s)(h(t)-h_0))}
				\\
				&=A-\alpha uw-\mu_1u+r_2w+r_1v.
			\end{aligned}
		\end{equation*}
		For simplicity, define some functions of the coefficients $X, Y, Z,$ and $W$ such that
		\begin{align*}
			\frac{\partial s}{\partial r}=\frac{1}{1+\xi'(s)(h(t)-h_0)}&\eqqcolon \sqrt{X(h(t),s)},
			\\
			\frac{\partial^2s}{\partial r^2}=-\frac{\xi''(s)(h(t)-h_0)}{[1+\xi'(s)(h(t)-h_0)]^3}&\eqqcolon Y(h(t),s),
			\\
			-\frac{1}{h'(t)}\frac{\partial s}{\partial t}=\frac{\xi(s)}{1+\xi'(s)(h(t)-h_0)} &\eqqcolon Z(h(t),s),
			\\
			(n-1)A\frac{(s\xi'-\xi)(h(t)-h_0)}{s[s+\xi(s)(h(t)-h_0)]} &\eqqcolon W(h(t),s).
		\end{align*}
		Then we can rewrite the equation above
		\begin{equation*}
			u_t-Xd_1\Delta_s u-(Yd_1+h'Z+Wd_1)u_s=A-\alpha uw-\mu_1u+r_2w+r_1v.
		\end{equation*}
		Similary, we had direct calculations for $E$ and $I$ respectively and it is changed to
		\begin{equation*}
			\begin{aligned}
				v_t-Xd_2\Delta_s v-(Yd_2+h'Z+Wd_2)v_s=(1-p)\alpha uw-\beta v-r_1v-\mu_2v
				\\
				w_t-Xd_3\Delta_s w-(Yd_3+h'Z+Wd_2)w_s=p\alpha uw+\beta v-r_2w-\mu_3w.
			\end{aligned}
		\end{equation*}
		Therefore the free boundary problem \cref{free boundary SEIS} with \cref{initial cond} becomes
		\begin{equation} \label{modified free boundary SEIS1}
			\begin{cases}
				u_t-Xd_1\Delta_s u-(Yd_1+h'Z+Wd_1)u_s
				=A-\alpha uw-\mu_1u+r_2w+r_1v, &(s,t)\in\Omega_\infty,\\
				v_t-Xd_2\Delta_s v-(Yd_2+h'Z+Wd_2)v_s
				=(1-p)\alpha uw-\beta v-r_1v-\mu_2v, &(s,t)\in\Omega_{h_0}, \\
				w_t-Xd_3\Delta_s w-(Yd_3+h'Z+Wd_2)w_s
				=p\alpha uw+\beta v-r_2w-\mu_3w, &(s,t)\in\Omega_{h_0},\ \\
				h'(t)=-\beta v_s(h_0,t)-\mu w_s(h_0,t), &t>0\\
			\end{cases}
		\end{equation}
		with
		\begin{equation} \label{modified free boundary SEIS2}
			\begin{cases}
				u_s(0,t)=v_s(0,t)=w_s(0,t)=0, &t>0, \\
				v(s,t)=w(s,t)=0, &(s,t)\notin\Omega_{h_0}, \\
				u(s,0)=u_0(s), v(s,0)=v_0(s), w(s,0)=w_0(s), &s \geq 0.
			\end{cases}
		\end{equation}
		where $u_0=S_0,\ v_0=E_0,\ w_0=I_0$.
		Let
		\begin{equation*}h^*=-\beta v_0'(h_0)-\mu w_0'(h_0).
		\end{equation*}
		Then for $0<T\leq\frac{h_0}{8(1+h^*)}$,
		define
		\begin{equation*}
			\begin{aligned}
			&H_T :=\big\{h\in C^1([0,T]):h(0)=h_0,\ h'(0)=h^*,\ \lVert h'-h^*\rVert_{C([0,T])}\leq 1\big\},\\
			&U_T :=\big\{u\in C(\overline{\Omega_{\infty T}}):u(s,0)=u_0(s),\ \lVert u-u_0 \rVert_{L^\infty(\overline{\Omega_{\infty T}})}\leq 1\big\},\\
			&V_T :=\big\{v \in C(\overline{\Omega_{\infty T}}):v(s,t)=0 \text{ for } s\geq h_0,\ 0 \leq t \leq T,\\
			&\qquad \qquad v(s,0)=v_0(s) \text{ for } 0\leq s \leq h_0,\ \lVert v-v_0 \rVert _{L^\infty(\overline{\Omega_{\infty T}})} \leq 1\big\},\\
			&W_T :=\big\{w \in C(\overline{\Omega_{\infty T}}):w(s,t)=0 \text{ for } s\geq h_0,\ 0 \leq t \leq T,\\
			&\qquad \qquad \ w(s,0)=w_0(s) \text{ for } 0\leq s \leq h_0,\ \lVert w-w_0 \rVert _{L^\infty(\overline{\Omega_{\infty T}})} \leq 1\big\}.
			\end{aligned}
		\end{equation*}
		Since $h_1(0)=h_2(0)=h_0,$ for $h_1,h_2 \in H_T$,
		\begin{equation} \label{h12}
			\lVert h_1-h_2 \rVert_{C([0,T])} \leq T \lVert h_1'-h_2'\rVert _{C([0,T])}.
		\end{equation}
		Now, define $\Gamma_T$ by
		\begin{equation*}
			\Gamma_T :=U_T \times V_T \times W_T \times H_T.
		\end{equation*}
		Then $\Gamma_T$ is a complete metric space with the metric
		\begin{equation*}
			\begin{aligned}
			&d((u_1,v_1,w_1;h_1),(u_2,v_2,w_2;h_2))\\
			&=\lVert u_1-u_2\rVert_{L^\infty(\overline{\Omega_{\infty T}})}+\lVert v_1-v_2\rVert_{L^\infty(\overline{\Omega_{\infty T}})}+\lVert w_1-w_2\rVert_{L^\infty(\overline{\Omega_{\infty T}})}+\lVert h_1'-h_2'\rVert_{C([0,T])},
			\end{aligned}
		\end{equation*}
		because closed subspace of complete metric space is also complete.
		By using $L^p$ estimate for parabolic equations and Sobolev's embeddings, we can find that for all $(u,v,w;h) \in \Gamma_T$, the following initial boundary value problem

		\begin{equation*}
			\begin{cases}
				\widetilde{u}_t-Xd_1\Delta_s \widetilde{u}-(Yd_1+h'Z+Wd_1) \widetilde{u}_s=A-\alpha uw-\mu_1u+r_2w+r_1v, &(s,t)\in \Omega_\infty,\\
				\widetilde{v}_t-Xd_2\Delta_s \widetilde{v}-(Yd_2+h'Z+Wd_2)\widetilde{v}_s=(1-p)\alpha uw-\beta v-r_1v-\mu_2v, &(s,t)\in\Omega_{h_0},\\
				\widetilde{w}_t-Xd_3\Delta_s \widetilde{w}-(Yd_3+h'Z+Wd_2)\widetilde{w}_s=p\alpha uw+\beta v-r_2w-\mu_3w, &(s,t)\in\Omega_{h_0},\\
				h'(t)=-\beta v_s(h_0,t)-\mu w_s(h_0,t), &t>0,\\
			\end{cases}
		\end{equation*}
		with
		\begin{equation*}
			\begin{cases}
				\widetilde{u}_s(0,t)=\widetilde{v}_s(0,t)=\widetilde{w}_s(0,t)=0, &t>0, \\
				\widetilde{v}(s,t)=\widetilde{w}(s,t)=0, &(s,t)\notin\Omega_{h_0}, \\
				\widetilde{u}(s,0)=u_0(s), \widetilde{v}(s,0)=v_0(s), \widetilde{w}(s,0)=w_0(s), &s \geq 0,
			\end{cases}
		\end{equation*}
		has a unique solution 
		\begin{equation*}
			(\widetilde u,\widetilde v,\widetilde w) \in [C^{1+\gamma}([0,+\infty) \times [0,T])]^3
		\end{equation*}
		and there exists a constant $K$ depending on $\gamma, h_0, \lVert S_0 \rVert, \lVert E_0 \rVert$ and $\lVert I_0 \rVert$ satisfying
		\begin{equation}	\label{tilda uvw estimates}
			\left \{ \begin{aligned}
				&\lVert \widetilde u \rVert _{{C^{1+\gamma}}([0,+\infty) \times [0,T])} \leq K,\\
				&\lVert \widetilde v \rVert _{{C^{1+\gamma}}([0,h_0] \times [0,T])} \leq K,\\
				&\lVert \widetilde w \rVert _{{C^{1+\gamma}}([0,h_0] \times [0,T])} \leq K.
			\end{aligned} \right.
		\end{equation}
		Define
		\begin{equation*}
			\widetilde h(t)=h_0-\int_0^t\beta v_s(h_0,\tau)+\mu w_s(h_0,\tau) \, d\tau.
		\end{equation*}
		Then
		\begin{equation*}
			\begin{aligned}
				&\widetilde h'(t)=-\beta v_s(h_0,\tau)-\mu w_s(h_0,\tau)d\tau,\\
				&\widetilde h(0)=h_0,\\
				&\widetilde h'(0)=-\beta {v_0}'(h_0)-\mu {w_0}'(h_0)=h^*.
			\end{aligned}
		\end{equation*}
		Therefore
		\begin{equation}	\label{h prime estimate}
			\begin{aligned}
			\lVert {\widetilde h}'(t) \rVert _{C^{\frac{\gamma}{2}}([0,T])}&=\lVert \beta v_s(h_0,\tau)+\mu w_s(h_0,\tau) ]\rVert_{C^\frac{\gamma}{2}([0,T])}\leq \beta \lVert v_s \rVert + \mu \lVert w_s \rVert \leq (\beta + \mu)K,
			\end{aligned}
		\end{equation}
		and let 
		\begin{equation*}
			K' \coloneqq (\beta + \mu)K.
		\end{equation*}
		Define $\mathcal{F} : \Gamma_T \to [C([0,+\infty))]^3$ by
		\begin{equation*}
			\mathcal{F}(u(s,t),v(s,t),w(s,t);h(t))=(\widetilde u(s,t),\widetilde v(s,t),\widetilde w(s,t);\widetilde h(t)).
		\end{equation*}
		Then $(u(s,t),v(s,t),w(s,t);h(t))\in \Gamma_T$ is a fixed point of $\mathcal{F}$ if and only if $(u(s,t),v(s,t),w(s,t);h(t))$ is a solution of \cref{modified free boundary SEIS1} and \cref{modified free boundary SEIS2}.
		From 
		\begin{equation*}
			\begin{aligned}
			\frac{1}{T^{\frac{\gamma}{2}}}\underset{t}{\sup}\lvert {\widetilde h}'(t)-h^*(t) \rvert &\leq \frac{1}{T^{\frac{\gamma}{2}}}\underset{t,t'}{\sup}\lvert {\widetilde h}'(t)-{\widetilde h}'(t') \rvert \leq \underset{t,t'}{\sup}\frac{\lvert{\widetilde h}'(t)-{\widetilde h}'(t')\rvert}{\lvert t-t' \rvert^{\frac{\gamma}{2}}}\\
			&=\lVert {\widetilde h}'(t) \rVert _{\dot{C}^{0,\frac{1+\gamma}{2}}([0,T])}\leq \lVert {\widetilde h}'(t) \rVert_{C^\frac{\gamma}{2}([0,T])},
			\end{aligned}
		\end{equation*}
		together with \cref{tilda uvw estimates} and \cref{h prime estimate}, we have
		\begin{equation} \label{h' holder}
			\lVert {\widetilde h}'-h^* \rVert_{C([0,T])} \leq \lVert {\widetilde h}' \rVert_{C^{\frac{\gamma}{2}}([0,T])}T^{\frac{\gamma}{2}}\leq(\beta + \mu)KT^{\frac{\gamma}{2}}.
		\end{equation}
		Similarly, calculating the norm of $\tilde u-u_0, \tilde v-v_0 \text{ and }\tilde w-w_0$,
		\begin{equation} \label{u holder}
			\begin{aligned}
				\lVert \widetilde u-u_0 \rVert_{C(\overline{\Omega_{\infty T}})}&\leq \underset{x,t}{\sup}\lvert \widetilde u(x,t) -u_0(x)\rvert T^{\frac{1+\gamma}{2}}\leq \underset{x,x',t,t'}{\sup}\lvert \widetilde u(x,t)-\widetilde u(x',t')\rvert T^{\frac{1+\gamma}{2}}\\
				&\leq\lVert \widetilde u \rVert_{\dot{C}^{0,\frac{1+\gamma}{2}}(\overline{\Omega_{\infty T}})}T^{\frac{1+\gamma}{2}}\leq \lVert \widetilde u \rVert_{C^{0,\frac{1+\gamma}{2}}(\overline{\Omega_{\infty T}})}T^{\frac{1+\gamma}{2}} \leq \lVert \widetilde u \rVert_{C^{1+\gamma}(\overline{\Omega_{\infty T}})}T^{\frac{1+\gamma}{2}},
			\end{aligned}
		\end{equation}
		\begin{equation} \label{v holder}
			\begin{aligned}
				\lVert \widetilde v-v_0 \rVert_{C(\overline{\Omega_{h_0 T}})}&\leq \underset{x,t}{\sup}\lvert \widetilde v(x,t) -v_0(x)\rvert T^{\frac{1+\gamma}{2}}\leq \underset{x,x',t,t'}{\sup}\lvert \widetilde v(x,t)-\widetilde v(x',t')\rvert T^{\frac{1+\gamma}{2}}\\
				&\leq\lVert \widetilde v \rVert_{\dot{C}^{0,\frac{1+\gamma}{2}}(\overline{\Omega_{h_0 T}})}T^{\frac{1+\gamma}{2}}\leq \lVert \widetilde v \rVert_{C^{0,\frac{1+\gamma}{2}}(\overline{\Omega_{h_0 T}})}T^{\frac{1+\gamma}{2}}\leq \lVert \widetilde v \rVert_{C^{1+\gamma}(\overline{\Omega_{h_0 T}})}T^{\frac{1+\gamma}{2}},
			\end{aligned}
		\end{equation}
		and
				\begin{equation} \label{w holder}
			\begin{aligned}
				\lVert \widetilde w-w_0 \rVert_{C(\overline{\Omega_{h_0 T}})}&\leq \underset{x,t}{\sup}\lvert \tilde w(x,t) -w_0(x)\rvert T^{\frac{1+\gamma}{2}}\leq \underset{x,x',t,t'}{\sup}\lvert \widetilde w(x,t)-\widetilde w(x',t')\rvert T^{\frac{1+\gamma}{2}}\\
				&\leq \lVert \widetilde w \rVert_{\dot{C}^{0,\frac{1+\gamma}{2}}(\overline{\Omega_{h_0 T}})}T^{\frac{1+\gamma}{2}}\leq \lVert \widetilde w \rVert_{C^{0,\frac{1+\gamma}{2}}(\overline{\Omega_{h_0 T}})}T^{\frac{1+\gamma}{2}}\leq \lVert \tilde w \rVert_{C^{1+\gamma}(\overline{\Omega_{h_0 T}})}T^{\frac{1+\gamma}{2}}.
			\end{aligned}
		\end{equation}
		Therefore 
		\begin{equation}
		\lVert \widetilde u-u_0 \rVert_{C(\overline{\Omega_{\infty T}})}, \lVert \widetilde v-v_0 \rVert_{C(\overline{\Omega_{\infty T}})}, \lVert \widetilde w-w_0 \rVert_{C(\overline{\Omega_{\infty T}})} \leq KT^{\frac{1+\gamma}{2}},	
		\end{equation}
and if we take 
\begin{equation*}
	T \leq \min \{((\beta+\mu)K)^{-\frac{2}{\gamma}},K^{-\frac{2}{1+\gamma}}\},
\end{equation*}
then $\mathcal{F}$ maps $\Gamma_T$ to itself. To show $\mathcal{F}$ is a contraction map on $\Gamma_T$ for sufficiently small $T>0$, let 
\begin{equation*}
	(u_1,v_1,w_1;h_1),(u_2,v_2,w_2;h_2)\in \Gamma_T,
\end{equation*}
and denote
\begin{equation*}
	(\widetilde u_1(s,t),\widetilde v_1(s,t),\widetilde w_1(s,t);\widetilde h_1(t))=\mathcal{F}(u_1(s,t),v_1(s,t),w_1(s,t);h_1(t))
\end{equation*}  and 
\begin{equation*}
	(\widetilde u_2(s,t),\widetilde v_2(s,t),\widetilde w_2(s,t);\widetilde h_2(t))=\mathcal{F}(u_2(s,t),v_2(s,t),w_2(s,t);h_2(t)).
\end{equation*}
Then it follows from \cref{h' holder}, \cref{u holder}, \cref{v holder}, and \cref{w holder} that 
		\begin{equation*}
			\begin{aligned}
				&\lVert \widetilde u_i \rVert_{C^{1+\gamma}(\overline{\Omega_{\infty T}})} \leq K,\\
				&\lVert \widetilde v_i \rVert_{C^{1+\gamma}(\overline{\Omega_{h_0 T}})} \leq K,\\
				&\lVert \widetilde w_i \rVert_{C^{1+\gamma}(\overline{\Omega_{h_0 T}})} \leq K,\\
				&\lVert {\widetilde h_i}' \rVert_{C^{\frac{\gamma}{2}}([0,T])} \leq K',\\
			\end{aligned}
		\end{equation*}
		for $i=1,2$. Then if setting $U=\widetilde u_1 -\widetilde u_2$, we find that $U(s,t)$ satisfies
		\begin{equation*}
			\begin{aligned}
				&U_t-X(h_2,s)d_1\Delta_s U-(Y(h_2,s)d_1+h_2'Z(h_2,s)+W(h_2,s)d1)U_s\\
				&=(X(h_1,s)-X(h_2,s))d_1 \Delta_s \widetilde u_1 + (Y(h_1,s)-Y(h_2,s))d_1U_s\\
				&+(h_1'Z(h_1,s)+h_2'Z(h_2,s))U_s+W(h_1,s)-W(h_2,s))d_1U_s\\
				&+\alpha(u_2w_2-u_1w_1)+\mu_1(u_2-u_1)+r_2(w_1-w_2)+r_1(v_1-v_2),
			\end{aligned}
		\end{equation*}
		for $(s,t)\in\Omega_{h_0},$ and
		\begin{equation*}
			\begin{aligned}
				&\frac{\partial U}{\partial s}(t,0)=0, &&\quad t>0,\\
				&U(t,h_0)=0, &&\quad t>0,\\
				&U(0,s)=0, &&\quad s\leq h_0.
			\end{aligned}
		\end{equation*}
		Using the $L^p$ estimates for parabolic equations and Sobolev's imbedding, we obtain
		\begin{equation} \label{u tilde}
			\lVert \widetilde u_1 - \widetilde u_2 \rVert_{C^{1+\gamma}(\overline{\Omega_{h_0 T}})} \leq K''(\lVert u_1 - u_2 \rVert_{C(\overline{\Omega_{h_0 T}})}+\lVert h_1 -h_2 \rVert_{C^1([0,T])},
		\end{equation}
		where $K''$ depends on $K, K'$ and the functions $X, Y, Z,$ and $W$. Similarly, 
		\begin{equation} \label{vw tilde}
			\begin{aligned}
				&\lVert \widetilde v_1 - \widetilde v_2 \rVert_{C^{1+\gamma}(\overline{\Omega_{h_0 T}})} \leq K''(\lVert v_1 - v_2 \rVert_{C(\overline{\Omega_{h_0 T}})}+\lVert h_1 -h_2 \rVert_{C^1([0,T])},\\
				&\lVert \widetilde w_1 - \widetilde w_2 \rVert_{C^{1+\gamma}(\overline{\Omega_{h_0 T}})} \leq K''(\lVert w_1 - w_2 \rVert_{C(\overline{\Omega_{h_0 T}})}+\lVert h_1 -h_2 \rVert_{C^1([0,T])},
			\end{aligned}
		\end{equation}
		for same $K''$ depending on $K, K'$ and the functions $X, Y, Z,$ and $W$. Taking the difference of the equations for $\widetilde h_1$ and $\widetilde h_2$ results in
		\begin{equation} \label{h tilde}
			\lVert \widetilde h_1'-\widetilde h_2' \rVert_{C^{\frac{\gamma}{2}}([0,T])}\leq \mu \lVert \widetilde w_{1,s} - \widetilde w_{2,s} \rVert_{C_t^{\frac{\gamma}{2}}(\overline{\Omega_{h_0 T}})} + \beta \lVert \widetilde v_{1,s} - \widetilde v_{2,s} \rVert_{C_t^{\frac{\gamma}{2}}(\overline{\Omega_{h_0 T}})}.
		\end{equation}
		Therefore assuming $T \leq 1$ with \cref{h12}, \cref{u tilde}, \cref{vw tilde}, and \cref{h tilde}, we obtain
		\begin{equation*}
		\begin{aligned}
			&\lVert \widetilde u_1 - \widetilde u_2 \rVert_{C^{1+\gamma}(\overline{\Omega_{h_0 T}})}
			+\lVert \widetilde v_1 - \widetilde v_2 \rVert_{C^{1+\gamma}(\overline{\Omega_{h_0 T}})}
			+\lVert \widetilde w_1 - \widetilde w_2 \rVert_{C^{1+\gamma}(\overline{\Omega_{h_0 T}})}
			+\lVert \widetilde h_1'-\widetilde h_2' \rVert_{C^{\frac{\alpha}{2}}([0,T])}\\
			&\leq K''' (\lVert u_1-u_2 \rVert_{C(\overline{\Omega_{h_0 T}})}
			+\lVert v_1-v_2 \rVert_{C(\overline{\Omega_{h_0 T}})}
			+\lVert w_1-w_2 \rVert_{C(\overline{\Omega_{h_0 T}})}
			+\lVert h_1'-h_2' \rVert_{C([0,T])})
		\end{aligned}
		\end{equation*}
		with $K'''$ depending on $K'', \mu$ and $\beta$. Therefore for sufficiently small $T$ $\mathcal{F}$ is a contraction mapping on $\Gamma_T$, so $\mathcal{F}$ has a unique fixed point $(u,v,w;h)$ in $\Gamma_T$, and it is a unique solution for the systems.
\end{proof}

Using Schauder estimates, we conclude that the solution achieves sufficient regularity to qualify as a classical solution.

\begin{proposition}
	A unique solution obtained in \cref{shorttime} is a classical solution.
\end{proposition}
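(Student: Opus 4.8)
The plan is a routine parabolic bootstrap applied to the straightened system \cref{modified free boundary SEIS1}--\cref{modified free boundary SEIS2}, whose solution $(u,v,w)\in C^{1+\gamma}$, $h\in C^{1+\gamma/2}([0,T])$ is furnished by \cref{shorttime}. First I would record that the transformed equations are uniformly parabolic with H\"older coefficients: since $\xi\in C^{3}$ and $\lvert h(t)-h_0\rvert\le h_0/8$ on $[0,T]$, one has $1+\xi'(s)(h(t)-h_0)\ge 3/8$, so each principal coefficient $Xd_i$ is bounded above and below; moreover $X,Y,Z,W$ are smooth in $s$ --- indeed they vanish identically for $s$ near $0$, because $\xi\equiv 0$ there, which removes any apparent singularity of the lower-order terms at the center --- and in $t$ they are as regular as $h$, hence at least $C^{\gamma}$ in the parabolic sense. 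The reaction terms on the right-hand sides are polynomials in $(u,v,w)$, and $u,v,w\in C^{1+\gamma}\subset C^{\gamma}$ with $C^{\gamma}$ an algebra, so the sources lie in $C^{\gamma}$ as well. Thus each of $u,v,w$ solves a linear, uniformly parabolic equation with $C^{\gamma}$ coefficients and $C^{\gamma}$ right-hand side.

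Next I would invoke interior parabolic Schauder estimates on subdomains compactly contained in $\Omega_\infty$ (resp.\ $\Omega_{h_0}$) to conclude $u,v,w\in C^{2+\gamma}$ there; in particular $u_t,v_t,w_t$ and $\Delta_s u,\Delta_s v,\Delta_s w$ exist, are continuous, and the equations hold pointwise. For regularity up to the lateral boundaries I would use boundary Schauder estimates: the boundary $\{s=0\}$ is flat and the condition $u_s=v_s=w_s=0$ is homogeneous Neumann, while on $\{s=h_0\}$ the Dirichlet data $v=w=0$ is smooth, so away from $t=0$ one obtains $C^{2+\gamma}$ regularity up to both $\{s=0\}$ and $\{s=h_0\}$. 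Undoing the diffeomorphism of \cref{straightening lem}, which is $C^{3}$ in the space variable and, once $h$ is known, $C^{1+\gamma/2}$ in $t$, transfers this regularity back to $(S,E,I)$; regularity at the geometric center $r=0$ is then obtained by viewing the solution as a radial function on $\mathbb{R}^n$, where the problem is just the heat equation with a smooth nonlinearity, so no separate treatment of the singular term $\tfrac{n-1}{r}\partial_r$ is needed.

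Finally I would upgrade the free boundary: from $h'(t)=-\beta v_s(h_0,t)-\mu w_s(h_0,t)$ together with $v,w\in C^{2+\gamma}$ up to $\{s=h_0\}$, the traces $v_s(h_0,\cdot),w_s(h_0,\cdot)$ are $\tfrac{1+\gamma}{2}$-H\"older in $t$, so $h\in C^{1,(1+\gamma)/2}([0,T])$ and the free-boundary relation holds classically; one may iterate once more to improve the time-regularity of the coefficients and lift $h$ to higher order, but $h\in C^{1}$ together with the interior regularity of $(S,E,I)$ already means that $(S,E,I;h)$ is a classical solution of \cref{free boundary SEIS}.

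The step I expect to be delicate is not the interior estimate, which is standard once the coefficients are identified as H\"older, but the behavior at the space-time corner $(s,t)=(h_0,0)$, where the fixed Dirichlet boundary meets the initial slice: without compatibility conditions the best regularity there is H\"older of lower order, so the ``classical solution'' conclusion should be understood in the interior together with continuity up to the parabolic boundary (the latter coming from the $C^{1+\gamma}$ bound of \cref{shorttime}), unless one additionally assumes the natural compatibility of the initial data --- at $s=0$ the conditions $\partial_n S_0=\partial_n E_0=\partial_n I_0=0$, and at $s=h_0$ the condition $E_0(h_0)=I_0(h_0)=0$ already contained in \cref{cond for initial cond} --- in which case $C^{2+\gamma}$ regularity up to $t=0$ follows as well.
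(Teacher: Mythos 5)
Your proposal follows essentially the same route as the paper: a parabolic Schauder bootstrap applied to the straightened system \cref{modified free boundary SEIS1}--\cref{modified free boundary SEIS2}, using the $C^{1+\gamma}$ bounds of \cref{shorttime} to make the coefficients and right-hand sides H\"older and then lifting $(u,v,w)$ to $C^{2+\gamma}$ and $h$ to $C^{1+\gamma/2}$ --- the paper states this in one line, whereas you supply the details (uniform parabolicity from $\lvert h-h_0\rvert\le h_0/8$, the algebra property of $C^{\gamma}$, the treatment of the singular term at $r=0$, and the corner-compatibility caveat at $(h_0,0)$ that the paper passes over in silence). One minor slip: near $s=0$ the coefficient $X$ equals $1$ rather than vanishing (only $Y$, $Z$, $W$ vanish there, which is what your argument actually uses), but this does not affect the conclusion.
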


\begin{proof}
Since $\left(u\left(s,t\right),v\left(s,t\right),w\left(s,t\right);h\left(t\right)\right) \in C^{1+\gamma}(\overline{\Omega_{\infty T}})\times C^{1+\gamma}(\overline{\Omega_{h_0 T}})^2\times C^{\frac{\gamma}{2}}([0,T])$ and each $u,\ v,\ w$, and $h$ is at left side of the equation \cref{free boundary SEIS} and with \cref{initial cond}, we have additional regularity for $(u(s,t),v(s,t),w(s,t);h(t))$ as a solution of the problem \cref{modified free boundary SEIS1} with \cref{modified free boundary SEIS2} by using Schauder estimates, namely, 
		\begin{equation*}
			\begin{aligned}
				h &\in C^{1+\frac{\gamma}{2}}([0,T]),\\
				S &\in C^{2+\gamma}(\Omega_{\infty T}),\\
				\text{and} \quad E,I &\in C^{2+\gamma}(\Omega_{hT}).
			\end{aligned}
		\end{equation*}
		with estimates \cref{tilda uvw estimates} and \cref{h prime estimate}. Thus, we can conclude that ($S(r,t)$, $E(r,t)$, $I(r,t)$; $h(t)$) is the classical solution of the problem \cref{free boundary SEIS} with \cref{initial cond}.
	\end{proof}
	
\subsection{Global Well--posedness}
	We can improve the existence of the solution to long-time existence if the problem \cref{free boundary SEIS} with \cref{initial cond} has the same diffusion rate, i.e., we will show that with assuming $d_1=d_2=d_3=d$ leads to this result. This assumption is necessary because a standard recursive method for using the comparison principle of each component cannot be applied to the system. The structure that expresses reinfection causes difficulty in using the recursive method, and this is the main point of our proof.
	
	\begin{theorem} \label{longtime}
		If $d_1=d_2=d_3=d$ for some $d$, the solution of the problem \cref{free boundary SEIS} with \cref{initial cond} exists and is unique for all $t \in (0,\infty)$.
	\end{theorem}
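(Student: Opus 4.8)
The plan is a standard continuation argument. By \cref{shorttime} the problem \cref{free boundary SEIS} with \cref{initial cond} has a unique solution on a maximal time interval $[0,T_{\max})$; I would suppose $T_{\max}<\infty$ and derive \emph{a priori} bounds on $(S,E,I;h)$ over $[0,T_{\max})$ depending only on the data and on $T_{\max}$, so that \cref{shorttime} can be restarted from a time near $T_{\max}$ with a step length that does not degenerate, contradicting maximality. First I would record the sign and monotonicity facts: $S>0$ and $E,I\ge 0$ by the usual invariant-region argument (at a first zero of $I$ the reaction reduces to $\beta_1E\ge 0$, at a first zero of $E$ to $(1-p)\alpha IS\ge 0$, and the source $A>0$ keeps $S$ positive, while $I_0>0$ on $[0,h_0)$); since $E,I$ are nonnegative and vanish on $r=h(t)$, we get $E_r(h(t),t)\le 0$ and $I_r(h(t),t)\le 0$, hence $h'(t)\ge 0$ and $h(t)\ge h_0$.

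The core of the proof, and the only place the hypothesis $d_1=d_2=d_3=d$ is used, is a uniform $L^\infty$ bound obtained by \emph{summing} the equations. Put $N:=S+E+I$, extending $E,I$ by $0$ for $r\ge h(t)$. Because the diffusivities agree, away from $r=h(t)$ one has $N_t-d\Delta_rN=A-\mu_1S-\mu_2E-\mu_3I\le A-\mu_\ast N$ with $\mu_\ast:=\min\{\mu_1,\mu_2,\mu_3\}$, while across $r=h(t)$ the radial derivative of $N$ jumps by $-E_r(h(t),t)-I_r(h(t),t)\ge 0$, so the kink contributes a nonnegative singular term to $\Delta_rN$ and $N$ remains a subsolution of $N_t-d\Delta_rN=A-\mu_\ast N$; with $N_r(0,t)=0$ and bounded initial datum, comparison with the spatially constant solution of $y'=A-\mu_\ast y$ gives
\[
0\le S,E,I\le N\le M_1:=\max\{\|S_0+E_0+I_0\|_{L^\infty},\,A/\mu_\ast\}\quad\text{on }[0,T_{\max}).
\]

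With $(S,E,I)$ bounded by $M_1$, the reaction terms of the $E$- and $I$-equations are bounded, so a classical parabolic barrier of the form $M_1\bigl[2M(h(t)-r)-M^2(h(t)-r)^2\bigr]$ on a thin inner strip $\{h(t)-M^{-1}<r<h(t)\}$, with $M$ large depending only on $M_1$, $d$, $h_0$, the reaction coefficients and the data, dominates $E$ and $I$ there; differentiating at $r=h(t)$ yields $-E_r(h(t),t),\,-I_r(h(t),t)\le 2MM_1$, hence $0\le h'(t)\le 2(\beta+\mu)MM_1$ and $h_0\le h(t)\le h_0+2(\beta+\mu)MM_1\,T_{\max}$ on $[0,T_{\max})$. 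Next, straightening the free boundary as in \cref{straightening lem} turns the system into a problem on a fixed domain whose coefficients are controlled by these bounds; the $L^p$--Schauder estimates used in the proof of \cref{shorttime} then yield bounds on $\|S\|_{C^{1+\gamma}}$, $\|E\|_{C^{1+\gamma}}$, $\|I\|_{C^{1+\gamma}}$ and $\|h\|_{C^{1+\gamma/2}}$ uniform on $[0,T_{\max})$, and (away from $t=0$) $C^{2+\gamma}$ bounds; in particular $(S,E,I;h)$ extends continuously up to $t=T_{\max}$, and by the strong maximum principle $I(\cdot,T_{\max})>0$ on $[0,h(T_{\max}))$, so the triple at time $T_{\max}$ again satisfies \cref{cond for initial cond}. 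Since the existence time in \cref{shorttime} depends only on $h_0$, $\gamma$ and the norms of the initial data---all now uniformly controlled---restarting at $t=T_{\max}-\delta$ for small $\delta>0$ continues the solution past $T_{\max}$, a contradiction, so $T_{\max}=\infty$. Global uniqueness follows from the local uniqueness in \cref{shorttime} by a connectedness argument on the set of times where two solutions coincide.

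The main obstacle is exactly the one flagged before the statement: the reinfection and latency terms ($r_1E,\,r_2I$ in the $S$-equation and $\beta_1E$ in the $I$-equation) couple the three equations so that the one-directional comparison chain ($S$, then $E$, then $I$) available for SIR-type free boundary problems breaks down, and there is no obvious scalar quantity controlling the system. Imposing $d_1=d_2=d_3$ is precisely what rescues the argument, since it makes $N=S+E+I$ solve a single scalar inequality; within that step the only delicate point is checking that the kink of $N$ on the free boundary is oriented favourably, so that $N$ is genuinely a subsolution. The remaining ingredients---the boundary gradient barrier, the bound on $h'$, the Schauder estimates and the continuation---are the standard free-boundary machinery and present no essential new difficulty.
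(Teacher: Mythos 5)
Your proposal is correct and follows essentially the same route as the paper: the hypothesis $d_1=d_2=d_3$ is used exactly as in Lemma \ref{uniform SEI} to bound $N=S+E+I$ by comparison with the ODE $y'=A-\mu_\ast y$, the quadratic barrier on the thin strip near $r=h(t)$ is the same one used in Lemma \ref{uniform h'} to bound $h'$, and the conclusion is the same continuation/contradiction argument from $T_{\max}<\infty$. Your explicit check that the kink of $N$ across the free boundary is favourably oriented (so that $N$ is genuinely a subsolution) is a point the paper's proof passes over silently, but it does not change the argument.
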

	
Before proving the theorem, it is necessary to establish uniform estimates for each compartment $S$, $E$, and $I$ and the free boundary $h$, which will be demonstrated first.
	
	\begin{lemma} \label{uniform SEI}
		Suppose that $d_1=d_2=d_3=d$ for some $d$, and let $(S,E,I;h)$ be a bounded solution of the problem  \cref{free boundary SEIS} with \cref{initial cond} for $t \in (0,T_0)$ for some $T_0 \in (0,\infty]$. Then there exist $C_1$ and $C_2$ independent of $T_0$ such that
	\begin{equation*}
		\left \{ \begin{aligned}
				&0<S(r,t)\leq C_1, \qquad &&(r,t) \in \bar\Omega_{\infty T_0},\\
				&0<I(r,t),\ E(r,t)\leq C_2, &&(r,t) \in \bar\Omega_{h T_0}.
			\end{aligned} \right.
	\end{equation*}
	\end{lemma}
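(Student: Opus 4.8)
The plan is to establish positivity first and then the uniform upper bounds, the latter being where the hypothesis $d_1=d_2=d_3=d$ does the real work. For nonnegativity, I would observe that the reaction nonlinearities are quasi-positive: writing the right-hand sides of \cref{free boundary SEIS} as $f_1,f_2,f_3$, one has $f_1\ge 0$ when $S=0$ and $E,I\ge 0$; $f_2=(1-p)\alpha IS\ge 0$ when $E=0$ and $S,I\ge 0$; and $f_3=\beta_1 E\ge 0$ when $I=0$ and $E\ge 0$. Combined with the Neumann condition at $r=0$ and the Dirichlet condition $E=I=0$ on $\Gamma_h$ (which lies in $\{E\ge 0,\,I\ge 0\}$), a standard componentwise comparison argument gives $S,E,I\ge 0$ on $\bar\Omega_{\infty T_0}$, $\bar\Omega_{hT_0}$ for as long as the solution exists. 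To upgrade to strict positivity I would bootstrap through the three equations: $S$ solves $S_t-d\Delta S+(\alpha I+\mu_1)S=A+r_2I+r_1E\ge A>0$, so $S>0$ for $t>0$ by the strong maximum principle and the Hopf lemma at $r=0$; then $I$ solves $I_t-d\Delta I+(r_2+\mu_3-p\alpha S)I=\beta_1E\ge 0$ with $I(r,0)=I_0>0$ on $[0,h_0)$, so $I>0$ in $\Omega_h$; and finally $E$ solves $E_t-d\Delta E+(\beta_1+r_1+\mu_2)E=(1-p)\alpha IS$, whose right-hand side is now strictly positive in $\Omega_h$, whence $E>0$ there as well.

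For the upper bound, set $N:=S+E+I$, extending $E$ and $I$ by zero for $r\ge h(t)$ so that $N$ is continuous on $\bar\Omega_{\infty T_0}$. Because the three diffusion coefficients coincide, summing the equations in \cref{free boundary SEIS} lets the Laplacians combine and the coupling terms $\alpha IS$, $r_1E$, $r_2I$ cancel pairwise, leaving
\[
N_t-d\Delta N=A-\mu_1S-\mu_2E-\mu_3I\le A-\mu_0 N,\qquad \mu_0:=\min\{\mu_1,\mu_2,\mu_3\}>0,
\]
valid both in $\Omega_h$ and outside $\Omega_h$ (where it reduces to $S_t-d\Delta S=A-\mu_1S$). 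On $\Gamma_h$ the radial derivative of $N$ jumps by $-(E_r+I_r)(h(t),t)\ge 0$, since $E_r,I_r\le 0$ there by the Hopf lemma, so $N$ is a generalized subsolution of $\partial_t-d\Delta+\mu_0$ with source $A$ on all of $\bar\Omega_{\infty T_0}$. Comparing $N$ with the spatially homogeneous supersolution $\overline N(t):=\tfrac{A}{\mu_0}+\big(M_0-\tfrac{A}{\mu_0}\big)e^{-\mu_0 t}$, where $M_0:=\lVert S_0\rVert_{C^0}+\lVert E_0\rVert_{C^0}+\lVert I_0\rVert_{C^0}\ge N(r,0)$ — and using the assumed boundedness of the solution to license comparison on the unbounded spatial domain — gives $N\le\overline N\le\max\{M_0,A/\mu_0\}$ on $\bar\Omega_{\infty T_0}$. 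Since $S,E,I\ge 0$, each is dominated by $N$, so the lemma holds with $C_1=C_2:=\max\{M_0,A/\mu_0\}$, which depends only on the data and not on $T_0$.

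I expect the main obstacle to be the lack of $C^1$ regularity of $N$ across the moving interface $\Gamma_h$, which prevents one from quoting the parabolic comparison principle verbatim. The point that resolves it is that the jump of $N_r$ at $\Gamma_h$ has the favorable sign, so $N$ is a subsolution in the distributional (equivalently, viscosity) sense; this can be made rigorous either by approximating $N$ from below by smooth subsolutions or by invoking the piecewise-smooth comparison principle already used for free-boundary epidemic systems in \cite{Kim_2013}. The unbounded spatial domain is a secondary, routine issue, handled by the a priori boundedness of $(S,E,I;h)$ hypothesized in the statement.
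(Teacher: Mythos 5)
Your proof is correct and follows essentially the same route as the paper: establish positivity, then sum the three compartments so that the common diffusion coefficient lets the Laplacians combine, and compare $N=S+E+I$ with the spatially homogeneous ODE supersolution $\overline N(t)$ to get the $T_0$-independent bound $\max\{M_0,A/\mu_0\}$. Your treatment is in fact more careful than the paper's on two points it glosses over --- the quasi-positivity argument for nonnegativity and the favorable sign of the jump of $N_r$ across $\Gamma_h$, which is what makes $N$ a legitimate (distributional) subsolution on all of $\overline{\Omega_{\infty T_0}}$.
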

	
	\begin{proof}
		Let $(S,E,I;h)$ be a bounded solution to the problem \cref{free boundary SEIS} with \cref{initial cond} for $t\in(0,T_0)$ for some $T_0\in(0,\infty]$ by \cref{shorttime}.
		Given $S(r,t), E(r,t), I(r,t) \geq 0$ for $(r,t) \in \bar\Omega_{\infty T_0}$ as long as the solution exists, applying the strong maximum principle to the equations in $\bar \Omega _{h T_0}$ implies that $S(r,t), E(r,t), I(r,t) >0$ for $(r,t)\in \bar \Omega_{h T_0}$, as the parabolic boundary is $[0,h(t)] \times \{ 0\}\  \cup \ \{ 0 \} \times (0,T_0) \ \cup \ \{h(t)\} \times (0,T_0)$.
		Since $d_1=d_2=d_3=d$ by assumption, 
		\begin{equation*}
		\begin{aligned}
			S_t+E_t+I_t-d\Delta S-d\Delta E-d\Delta I&=A-\mu_1 S-\mu_2 E-\mu_3 I\leq A-\min\{\mu_1, \mu_2, \mu_3 \}(S+E+I).
		\end{aligned}
		\end{equation*}
		Thus for $X(r,t):=S(r,t)+E(r,t)+I(r,t)$ and $b:=\min\{\mu_1, \mu_2, \mu_3 \}$,
		\begin{equation*}
			X_t+d\Delta X \leq A-bX.
		\end{equation*}
		Since $\overline X=\frac{A}{b}(\lVert S_0+E_0+I_0\rVert_C-\frac{A}{b})e^{-bt}$ is the solution of the problem,
		\begin{equation*}
			\left \{ \begin{aligned}
				&\frac{d\overline{X}}{dt}=A-b\overline{X},\qquad t>0,\\
				&\overline{X}(0)=\lVert S_0+E_0+I_0\rVert_C,
			\end{aligned} \right.
		\end{equation*}
		by comparison principle,
		\begin{equation*}
			X(r,t)=S(r,t)+E(r,t)+I(r,t) \leq \overline{X}(t)
		\end{equation*}
		for $r\geq 0$ and $t \in (0,\infty)$.
		Since $\displaystyle \lim_{t\to \infty}X=\frac{A}{b}$,
		\begin{equation*}
			\limsup_{t \to \infty}S(r,t)+E(r,t)+I(r,t) \leq \frac{A}{b}
		\end{equation*}
		uniformly for $r\in [0,\infty)$.
		Thus we can deduce that each $S(r,t)$, $E(r,t)$, and $I(r,t)$ is uniformly bounded, in other words, there exists $C_1,\ C_2>0$ such that
		\begin{equation*}
			\left \{ \begin{aligned}
				&0<S(r,t)\leq C_1 \qquad &&(r,t) \in \Omega_\infty,\\
				&0<I(r,t),\ E(r,t)\leq C_2, &&(r,t) \in \Omega_h.
			\end{aligned} \right.
		\end{equation*}
	\end{proof}
	
	\begin{lemma} \label{uniform h'}
		Suppose that $d_1=d_2=d_3=d$ for some $d$, and let $(S,E,I;h)$ be a bounded solution of the problem \cref{free boundary SEIS} with \cref{initial cond} for $t \in (0,T_0)$ for some $T_0 \in (0,\infty]$. Then there exists a constant $C_3$ independent of $T_0$ such that
		\begin{equation*}
			0<h'(t)\leq C_3
		\end{equation*}
		for $t\in (0,T_0)$.
	\end{lemma}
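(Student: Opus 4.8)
The plan is to prove the two bounds separately: the lower bound $h'>0$ by a Hopf boundary-point argument, and the upper bound by the standard moving-boundary barrier.

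\emph{Lower bound.} By \cref{uniform SEI} we already have $E,I>0$ in $\Omega_{h T_0}$, while $E=I=0$ on the lateral boundary $r=h(t)$ from the constraint in \cref{free boundary SEIS}. Near that boundary $r=h(t)\ge h_0>0$, so the coefficient $\frac{n-1}{r}$ in $\Delta_r$ is smooth and the equations for $E$ and $I$ are uniformly parabolic with bounded coefficients there; since $h\in C^{1+\gamma/2}$, the parabolic Hopf lemma applies at each front point $(h(t),t)$, $t>0$, giving $E_r(h(t),t)<0$ and $I_r(h(t),t)<0$. Hence $h'(t)=-\beta E_r(h(t),t)-\mu I_r(h(t),t)>0$ on $(0,T_0)$; in particular $h(t)\ge h_0$ for all $t$, which I will use below.

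\emph{Upper bound.} Let $C_1,C_2$ be the $T_0$-independent bounds of \cref{uniform SEI}. Since $S\le C_1$, $E,I\le C_2$, and the reaction terms $-(\beta_1+r_1+\mu_2)E$ and $-(r_2+\mu_3)I$ are nonpositive, both $E$ and $I$ satisfy
\[
\phi_t-d\,\Delta_r\phi\le K,\qquad K\coloneqq\alpha C_1C_2+\beta_1C_2,
\]
and the hypothesis $d_1=d_2=d_3=d$ lets us use one and the same barrier for both. I would fix $M\ge 2/h_0$, to be enlarged below, and put $\Omega_M\coloneqq\{(r,t):h(t)-M^{-1}<r<h(t),\ 0<t\le T_0\}$; since $h(t)\ge h_0$ this keeps $\Omega_M\subset\{r\ge h_0/2>0\}$. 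I would then take the comparison function
\[
w(r,t)\coloneqq C_2\bigl[\,2M(h(t)-r)-M^2(h(t)-r)^2\,\bigr],
\]
for which $w(h(t),t)=0$, $w(h(t)-M^{-1},t)=C_2\ge E,I$, and — enlarging $M$ using the regularity of $E_0,I_0$ near $r=h_0$ in \cref{cond for initial cond} — $w(\cdot,0)\ge\max\{E_0,I_0\}$ on $[h_0-M^{-1},h_0]$. A direct computation using $h'>0$ and $0\le M(h(t)-r)\le 1$ on $\Omega_M$ gives $w_t\ge 0$, $w_r\le 0$ and $w_{rr}=-2M^2C_2$, hence
\[
w_t-d\,\Delta_r w \;=\; w_t-d\,w_{rr}-d\,\frac{n-1}{r}\,w_r \;\ge\; 2dM^2C_2 .
\]
Choosing $M$ so large that also $2dM^2C_2\ge K$ makes $w$ a supersolution dominating the boundary data of both $E$ and $I$ on the parabolic boundary of $\Omega_M$, so by the comparison principle $E\le w$ and $I\le w$ on $\Omega_M$.

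Finally, since $E=I=w=0$ on $r=h(t)$ while $E,I\le w$ for $r<h(t)$, comparing one-sided $r$-derivatives at the front gives $w_r(h(t),t)\le E_r(h(t),t)\le 0$, and likewise for $I$; as $w_r(h(t),t)=-2MC_2$, this forces $|E_r(h(t),t)|,\ |I_r(h(t),t)|\le 2MC_2$. Therefore $h'(t)=-\beta E_r(h(t),t)-\mu I_r(h(t),t)\le 2(\beta+\mu)MC_2\eqqcolon C_3$. All of $C_1$, $C_2$, $K$ and the admissible $M$ depend only on $h_0$, $\gamma$, the norms of the initial data and the fixed coefficients, so $C_3$ is indeed independent of $T_0$. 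The step needing the most care is the choice of $M$: it must at once keep $\Omega_M$ away from the origin (uses $h\ge h_0$), absorb the inhomogeneity $K$ (immediate), and dominate the initial data near the front $r=h_0$ — this last requirement is the only slightly delicate one and relies on the regularity of $E_0,I_0$ at $h_0$; everything else is a routine comparison-principle estimate.
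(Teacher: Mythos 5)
Your proof is correct and follows essentially the same route as the paper: Hopf's lemma at the front for $h'>0$, then the quadratic barrier $w(r,t)=C_2\bigl[2M(h(t)-r)-M^2(h(t)-r)^2\bigr]$ on the strip $\Omega_M$ compared against both $E$ and $I$, yielding $|E_r|,|I_r|\le 2MC_2$ at $r=h(t)$ and hence $C_3=2(\beta+\mu)MC_2$. Your handling of the one-sided derivative at the front is in fact cleaner than the paper's (which has a sign slip, writing $w_r(h(t),t)=2C_2M$ instead of $-2C_2M$), and your explicit remark that $h\ge h_0$ keeps the $\tfrac{n-1}{r}$ coefficient bounded on $\Omega_M$ is a point the paper leaves implicit.
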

	
	\begin{proof}
		By Hopf's lemma to the equation for $E$ and $I$ of \cref{free boundary SEIS}, we can know that $E_r(h(t),t)<0$ and $I_r(h(t),t)<0$ for $0<t<T_0$.
		Then
		\begin{equation*}
			h'(t)=-\beta E_r(h(t),t)-\mu I_r(h(t),t)>0,
		\end{equation*}
		for $0<t<T_0$.
		Let
		\begin{equation*}
			\Omega_M \coloneqq\{(r,t):h(t)-\frac{1}{M}<r<h(t),\ 0<t<T_0\}
		\end{equation*}
		and construct an auxiliary function
		\begin{equation*}
			w(r,t) \coloneqq C_2(2M(h(t)-r)-M^2(h(t)-r)^2).
		\end{equation*}
		for some $M>0$. Then for $(r,t) \in \Omega_M$,
		\begin{equation*}
		\begin{aligned}
			&w_t=2C_2Mh'(t)(1-M(h(t)-r))\\
			&w_r=2C_2M(-1+M(h(t)-r))\\
			&w_{rr}=-2C_2M^2\\
			&\Delta w=-2C_2M^2+\frac{n-1}{r}2C_2M(-1+M(h(t)-r))\leq-2C_2M^2.
		\end{aligned}
		\end{equation*}
		Since $(1-p)\alpha IS-(\beta_1+r_1+\mu_2)E \leq (1-p)\alpha C_1 C_2$, we can calculate
		\begin{equation*}
			w_t-d_2w \geq 2d_2C_2M^2 \geq (1-p) \alpha C_1 C_2
		\end{equation*}
		if $M$ satisfies that
		\begin{equation*}
			M^2 \geq \frac{(1-p)\alpha C_1}{2d_2}.
		\end{equation*}
		Otherwise,
		\begin{equation*}
			w\left(h(t)-\frac{1}{M},t\right)=C_2 \geq E\left(h(t)-\frac{1}{M},t\right)
		\end{equation*}
		and
		\begin{equation*}
			w(h(t),t)=0=E(h(t),t).
		\end{equation*}
		If we choose $M$ satisfying $E_0(r)\leq w(r,0)$ for $r \in [h_0-\frac{1}{M},h_0]$, then by maximum principle, $E(r,t) \leq w(r,t)$ for $(r,t) \in \Omega_M$.
		Then we can also obtain that 
		\begin{equation}
			E_r(h(t),t) \geq w_r(h(t),t)=2C_2M.
		\end{equation}
		Similarly, for $(r,t) \in \Omega_M$,
		\begin{equation*}
			\begin{aligned}
				&w_t=2C_2Mh'(t)(1-M(h(t)-r)) \geq 0,\\
				&\Delta w\leq -2C_2M^2.
			\end{aligned}
		\end{equation*}
		Since $p\alpha IS +\beta_1 E-(r_2+\mu_3)I\leq p \alpha C_1 C_2+\beta_1 C_2$, we can calculate
		\begin{equation*}
			w_t-d_3\Delta w \geq 2d_3C_2M^2 \geq p \alpha C_1 C_2 + \beta_1 C_2
		\end{equation*}
		if $M$ satisfies that
		\begin{equation*}
			M^2 \geq \frac{p \alpha C_1 +\beta _1}{2 d_3}.
		\end{equation*}
		Otherwise,
		\begin{equation*}
			w\left(h(t)-\frac{1}{M},t\right)=C_2 \geq I\left(h(t)-\frac{1}{M},t\right),
		\end{equation*}
		and
		\begin{equation*}
			w(h(t),t)=0=I(h(t),t).
		\end{equation*}
		If we choose $M$ satisfying $I_0(r) \leq w(r,0)$ for $r \in [h_0-\frac{1}{M},h_0]$, then by maximum principle, $I(r,t) \leq w(r,t)$ for $(r,t) \in \Omega_M$.
		Then
		\begin{equation*}
			h'(t)=-\beta E_r(h(t),t)-\mu I_r(h(t),t) \leq 2(\beta+\mu)C_2M \eqqcolon C_3.
		\end{equation*}
		Therefore we have to find some $M$ independent of $T_0$ such that $E_0(r) \leq w(r,0)$ and $I_0(r) \leq w(r,0)$ for $r\in [h_0-\frac{1}{M},h_0]$.
		Since
		\begin{equation*}
			w_r(r,0)=2C_2M(-1 +M(h_0-r)) \leq -C_2M
		\end{equation*}
		for $r \in [h_0-\frac{1}{2M}, h_0]$ and $I_0'(r) \leq 0$ for $r \in (0,h_0]$,
		\begin{equation*}
			w_r(r,0) \leq -C_2M \leq -\frac{4}{3} \lVert I_0 \rVert _{C^1} \leq {I_0}'(r)
		\end{equation*}
		for $r \in [h_0-\frac{1}{2M}, h_0]$ if $M=\max \left\{ 
		\sqrt{\frac{(1-p)\alpha C_1}{2d_2}}, \sqrt{\frac{p\alpha C_1+\beta_1}{2d_3}},\frac{4\lVert I_0 \rVert _{C^1}}{3C_2} \right\}$. Moreover, we can deduce that $w(r,0) \geq I_0(r)$ for $r \in [h_0-\frac{1}{2M},h_0]$, since $w(h_0,0)=I_0(h_0)=0$. Furthermore, if $M \geq 1$,
		\begin{equation*}
			w(r,0)=C_2(2M(h_0-r)-M^2(h_0-r)^2) \leq w\left(h_0-\frac{1}{2M},0\right)=\frac{3}{4} C_2
		\end{equation*}
		and
		\begin{equation*}
			I_0(r) \leq \lVert I_0 \rVert _{C^1} \frac{1}{M} \leq \frac{3}{4} C_2.
		\end{equation*}
		Therefore $I_0(r) \leq w(r,0)$ for $r \in [h_0-\frac{1}{M},h_0]$, so we can conclude that
		\begin{equation*}
			\begin{aligned}
				&I_r(h(t),t) \geq w_r(h(t),t)=-2C_2M\\
				&E_r(h(t),t) \geq w_r(h(t),t)=-2C_2M,
			\end{aligned}
		\end{equation*}
		so
		\begin{equation*}
			h'(t) \leq (\mu + \beta) 2C_2 M \eqqcolon C_3.
		\end{equation*}
	\end{proof}
	
	\begin{proof}[proof of Theorem 3.4]
		
		By the uniqueness of the solution, there exists $T_{\max}$ such that $[0,T_{\max})$ is the maximal time interval where the solution exists.
		Suppose that $T_{\max}<\infty$. Since there exist $C_1$, $C_2$, and $C_3$ independent of $T_{\max}$ such that
		\begin{equation*}
			\begin{aligned}
				&0\leq S(r,t) \leq C_1, \qquad &&(r,t) \in [0,\infty) \times [0, T_{\max}) \\
				&0 \leq E(r,t), I(r,t) \leq C_2, &&(r,t) \in [0,h(t)] \times [0, T_{\max}) \\
				&0 \leq h'(t) \leq C_3, &&t \in [0, T_{\max}), \\
				&h_0 \leq h(t) \leq h_0+C_3t, &&t \in [0,T_{\max}),
			\end{aligned}
		\end{equation*}
		for fixed $\delta_0 \in (0,T_{\max})$ and $M>T_{\max}$, by , there exists $C_4>0$ depending on $\delta_0, M, C_1, C_2$ and $C_3$ such that
		\begin{equation*}
			\lVert S(\cdot,t)\rVert_{C^2[0,\infty)},\lVert E(\cdot,t)\rVert_{C^2[0,h(t)]},\lVert I(\cdot,t)\rVert_{C^2[0,h(t)]} \leq C_4
		\end{equation*}
		for $t\in [\delta_0,T_{\max})$.
		By the proof of \cref{shorttime}, there exists $\tau>0$ depending only on $C_1, C_2, C_3$ and $C_4$ such that the solution of the problem \cref{free boundary SEIS} with \cref{initial cond} with initial time $T_{\max}-\frac{\tau}{2}$ can be extended uniquely to the time $T_{\max}+\frac{\tau}{2}$.
		It is a contradiction, so we can conclude that $T_{\max}=\infty$.
		Therefore we can say that there exists the unique solution of the problem \cref{free boundary SEIS} with \cref{initial cond}.
	\end{proof}
	
\section{Asymptotic Behavior of the Model}
Most mathematical biologists are primarily concerned with the asymptotic behavior of solutions as time approaches infinity. Accordingly, we investigate this behavior after establishing the well-posedness of our model. In general, once a solution representing the spread of the infected individuals  is obtained, it is natural to ask how far the infection will propagate or whether it will eventually die out.

\subsection{The Basic Reproductive Number}
In epidemiology, the reproductive number denoted $\mathcal R_0$ is the average number of cases directly generated by one typical infected individual in a population consisting of the susceptibles only. It is among the quantities most urgently estimated for infectious diseases in outbreak situations, and its value provides insight when designing control interventions for established infections. Therefore reproductive number $\mathcal R_0$ serves as a threshold parameter that determines vanishing spreading dichotomy phenomenon.

We will be interested primarily in equilibrium solutions of the problem \cref{free boundary SEIS} with \cref{initial cond}, i.e., solutions of the problem
\begin{equation} \label{SEIS equilibrium1}
		\left \{ \begin{aligned}	
		&d_1\Delta S+A-\alpha IS-\mu_1 S+r_2 I+r_1E=0,&&(r,t)\in\Omega_\infty,\\
		&d_2\Delta E+(1-p)\alpha IS-\beta_1 E-r_1E-\mu_2 E=0,&&(r,t)\in \Omega_h,\\
		&d_3\Delta I+p\alpha IS+\beta_1 E-r_2I-\mu_3 I=0, &&(r,t)\in \Omega_h,\\
		&h'(t)=-\beta E_r(h(t),t)-\mu I_r(h(t),t),&&t>0,\\
		&E(r,t)=I(r,t)=0, &&(r,t)\notin \Omega_h,\\
		&S_r(0,t)=E_r(0,t)=I_r(0,t)=0, &&t>0,
		\end{aligned} \right.
\end{equation}
with initial conditions \cref{initial cond}.

Then we are only interested in solutions $(S,E,I;h)$ of \cref{SEIS equilibrium1} with \cref{initial cond} which satisfy $E \geq 0$ and $I \geq 0$ for $r<h$. Furthermore, in epidemiology, a disease-free equilibrium is a solution where $I=0$ for any $r<h$. Otherwise, an endemic equilibrium is a solution where $I>0$ for some $r<h$. Shortly, we will refer to them as DFE and EE in the remaining parts of this paper.

Most papers\cite{Yihong_2010, Allen_2008, Diekmann_2013} revealed that $R_0$ affects to the existence of DFE or EE and their stability. Morover, the reproductive number $\mathcal{R}_0$ is defined by using DFE and EE classicaly.

\begin{definition}
	Define reproductive number $\mathcal{R}_0>0$ such that, if $\mathcal{R}_0<1$, then the DFE is locally asymptotically stable, and the disease cannot invade the population, but if $\mathcal{R}_0>1$, then the DFE is unstable and invasion is always possible.
\end{definition}

Thus, by using the definition of the basic reproductive number $\mathcal R_0$, we will proceed in a similar way to show the asymptotic behavior of the model. Our goal is to predict the number based on known results and show that the behavior of the solution satisfies the conditions of the definition of the reproductive number $\mathcal R_0$.
\subsection{Vanishing Case $\mathcal R_0<1$}
We suppose that $d_1=d_2=d_3=d$ for some $d>0$ to use the result of the previous theorems.
\begin{proposition}\label{theorem 4.2}
	If $d_1=d_2=d_3=d$, and $\mathcal R_0^{\max}(:=\frac{\alpha A}{\mu_1 \min \{ r_1+\mu_2,\ r_2+\mu_3\}})<1$, then \[\lim_{t \to \infty}\lVert E(\cdot,t) \rVert_{C([0,T])}=\lim_{t \to \infty}\lVert I(\cdot,t) \rVert_{C([0,T])}=0,\] and $h_\infty<\infty$. Moreover, $\lim_{t \to \infty}S(r,t)=\frac{A}{\mu_1}$ uniformly in any bounded subset of $[0,\infty)$.
\end{proposition}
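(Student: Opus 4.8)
The plan is to argue in four stages: pin down the large-time ceiling of $S$, use it to force $E$ and $I$ to decay exponentially, and then exploit that decay both to bound the total displacement of the free boundary and to identify the limit of $S$. By \cref{shorttime}, \cref{uniform SEI} and \cref{uniform h'} the solution is globally defined with $0<S\le C_1$, $0<E,I\le C_2$ and $0<h'(t)\le C_3$; in particular $h$ is strictly increasing and $h_\infty:=\lim_{t\to\infty}h(t)\in(h_0,+\infty]$ exists. First I would establish that $\limsup_{t\to\infty}\sup_{r\ge 0}S(r,t)\le A/\mu_1$. Summing the three equations of \cref{free boundary SEIS} and using $d_1=d_2=d_3=d$ gives $(S+E+I)_t-d\Delta(S+E+I)=A-\mu_1S-\mu_2E-\mu_3I\le A-\mu_1(S+E+I)$, since $\mu_1=\min\{\mu_1,\mu_2,\mu_3\}$; comparison with the logistic ODE $\dot z=A-\mu_1 z$ then yields $\limsup_{t\to\infty}\sup_{r\ge 0}(S+E+I)(r,t)\le A/\mu_1$, so for every $\varepsilon>0$ there is $T_\varepsilon$ with $S(r,t)\le A/\mu_1+\varepsilon$ for all $r\ge 0$, $t\ge T_\varepsilon$. (If one does not wish to assume that $\mu_1$ is the smallest death rate, the same ceiling follows by comparing $Z=S+aE+bI$, with $a,b>0$ chosen so that the coefficient of $IS$ is nonpositive and $Z$ is a subsolution of $\dot z=A-\mu_1 z$, which is possible whenever $\beta_1+r_1+\mu_2>\mu_1$ and $r_2+\mu_3>\mu_1$.) This is where the specific constant $A/\mu_1$, hence the threshold $\mathcal R_0^{\max}=\alpha A/\bigl(\mu_1\min\{r_1+\mu_2,r_2+\mu_3\}\bigr)$, enters.

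Next I would prove that $\|E(\cdot,t)\|_{C}$ and $\|I(\cdot,t)\|_{C}$ decay exponentially. Set $P:=E+I$ and $m:=\min\{r_1+\mu_2,\,r_2+\mu_3\}$. Adding the $E$- and $I$-equations the $\beta_1 E$ terms cancel and
\[
P_t-d\Delta P=\alpha IS-(r_1+\mu_2)E-(r_2+\mu_3)I\le \alpha SP-mP=(\alpha S-m)P,
\]
using $I\le P$ and $(r_1+\mu_2)E+(r_2+\mu_3)I\ge mP$. Since $\mathcal R_0^{\max}<1$ means $\alpha A/\mu_1<m$, fix $\varepsilon>0$ with $\delta:=m-\alpha(A/\mu_1+\varepsilon)>0$; then on $\{0\le r<h(t),\ t\ge T_\varepsilon\}$ we have $P_t-d\Delta P\le-\delta P$, with $P=0$ on $\{r=h(t)\}$, $P_r=0$ at $r=0$, and $P\le C_2$ at time $T_\varepsilon$. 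The spatially constant function $\overline P(t)=C_2 e^{-\delta(t-T_\varepsilon)}$ is a supersolution dominating $P$ on the whole parabolic boundary of the expanding domain --- recall $P\equiv 0$ for $r\ge h(t)$ --- so the comparison principle gives $\|E(\cdot,t)\|_{C},\,\|I(\cdot,t)\|_{C}\le \|P(\cdot,t)\|_{C}\le C_2 e^{-\delta(t-T_\varepsilon)}\to 0$.

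Finally I would deduce $h_\infty<\infty$ and the limit of $S$. For the free boundary, I would repeat the barrier argument of \cref{uniform h'} on $\{h(t)-1/M<r<h(t)\}$ with the \emph{decaying} ceiling $\rho(t):=C_2 e^{-\delta(t-T_\varepsilon)}$ in place of $C_2$: for $M$ large (depending only on the data and $\delta$) the function $\rho(t)\bigl(2M(h(t)-r)-M^2(h(t)-r)^2\bigr)$ is a supersolution for both $E$ and $I$ near $r=h(t)$, which forces $|E_r(h(t),t)|,\,|I_r(h(t),t)|\le 2M\rho(t)$, hence $0<h'(t)\le 2M(\beta+\mu)\rho(t)$ for $t\ge T_\varepsilon$; integrating, $h_\infty=h(T_\varepsilon)+\int_{T_\varepsilon}^{\infty}h'(t)\,dt\le h(T_\varepsilon)+2M(\beta+\mu)C_2/\delta<\infty$. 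For $S$, the decay of $E$ and $I$ gives $\bigl|S_t-d\Delta S-(A-\mu_1 S)\bigr|=\bigl|{-\alpha IS+r_2 I+r_1 E}\bigr|\le (\alpha C_1+r_1+r_2)\rho(t)$, so $S$ is squeezed between the solutions of $\dot z^{\pm}=A-\mu_1 z^{\pm}\pm(\alpha C_1+r_1+r_2)\rho(t)$ (comparison on the half-line, with Neumann data at $r=0$ and $S$ bounded); since $\rho(t)\to 0$, both bounds tend to $A/\mu_1$, whence $S(r,t)\to A/\mu_1$ uniformly on $[0,\infty)$, in particular on any bounded subset.

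The main obstacle is the first stage. \cref{uniform SEI} by itself only yields the crude bound $S\lesssim A/\min\{\mu_1,\mu_2,\mu_3\}$, which is in general too large to guarantee $\alpha S<m$ and would force a worse threshold than $\mathcal R_0^{\max}$; extracting the sharp asymptotic ceiling $\limsup_{t\to\infty}S\le A/\mu_1$ --- which is exactly what drives the dichotomy --- is the heart of the matter. Once it is available, the remaining stages are standard comparison-principle estimates, the only technicality being the usual moving-boundary bookkeeping, which is harmless here because $E\equiv I\equiv 0$ for $r\ge h(t)$ and $h$ is increasing.
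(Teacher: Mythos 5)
Your proposal is correct and, for the core decay estimate, follows the same route as the paper: both arguments first extract the asymptotic ceiling $\limsup_{t\to\infty} S \le A/\mu_1$ from the equation for $S+E+I$ (the paper, like you, is implicitly taking $\mu_1=\min_i\mu_i$ at this step; your remark about replacing $S+E+I$ by $S+aE+bI$ is a sensible way to avoid that assumption), and then add the $E$- and $I$-equations so that the $\beta_1E$ transfer terms cancel and $P=E+I$ satisfies $P_t-d\Delta P\le(\alpha S-m)P$ with $m=\min\{r_1+\mu_2,\,r_2+\mu_3\}$, whence $P\to0$ once $\alpha A/\mu_1<m$, i.e.\ $\mathcal R_0^{\max}<1$. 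Where you genuinely diverge is the proof that $h_\infty<\infty$. The paper integrates the identity
$\frac{d}{dt}\int_0^{h(t)}r^{n-1}(E+I)\,dr=-\frac{d}{\beta}h^{n-1}h'+d\bigl(1-\tfrac{\mu}{\beta}\bigr)h^{n-1}I_r+\int_0^{h(t)}f\,r^{n-1}\,dr$
in time and uses only the sign of the reaction term $f\le(\alpha S-m)(E+I)\le0$ for $t\ge T_0$ to bound $h^n(T_1)$ uniformly in $T_1$; no decay rate for $E+I$ is required. You instead rerun the barrier construction of \cref{uniform h'} with the exponentially decaying ceiling $\rho(t)$ in place of $C_2$, obtain $0<h'(t)\le 2M(\beta+\mu)\rho(t)$, and integrate. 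Both routes are valid; yours yields the stronger quantitative conclusion that $h'$ decays exponentially (hence an explicit bound on $h_\infty-h(T_\varepsilon)$), at the mild price of needing a uniform $C^1$ bound on $E(\cdot,T_\varepsilon)$ and $I(\cdot,T_\varepsilon)$ near $r=h(T_\varepsilon)$ to verify the barrier's initial comparison --- this is supplied by the interior parabolic estimates already invoked in the proof of \cref{longtime}, so it is not a gap. Finally, your squeeze of $S$ between solutions of $\dot z^\pm=A-\mu_1z^\pm\pm(\alpha C_1+r_1+r_2)\rho(t)$ actually proves the convergence $S\to A/\mu_1$, which the paper only asserts.
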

\begin{proof}
	By the proof of \cref{longtime}, we know that
	\begin{equation*}
		S(r,t)\leq C_1 \leq \frac{A}{\mu_1},
	\end{equation*}
	since $b=\min{\mu_1, \mu_2, \mu_3}$, $\mu_1 \leq \mu_2$ and $\mu_1 \leq \mu_3$. For $R_0<1$, there exists $T_0$ such that
	\begin{equation*}
		S(r,t) \leq \frac{A}{\mu_1}\frac{1+\mathcal R_0}{2\mathcal R_0}
	\end{equation*}
	in $[0,\infty) \times [T_0,\infty)$. Since
	\begin{equation*}
		\begin{aligned}
			(E+I)_t-d\Delta (E+I)&=\alpha IS-(r_1+\mu_2)E-(r_2+\mu_3)I \leq\alpha IS -\min \{ r_1+\mu_2,\ r_2+\mu_3\}(E+I)\\
			&\leq\left( \alpha \frac{A}{\mu_1} \frac{1+R_0}{2R_0}-\min\{ r_1+\mu_2,\ r_2+\mu_3\}\right)(E+I)
		\end{aligned}
	\end{equation*}
	for $(r,t) \in \Lambda_h$, let $c=\min \{ r_1+\mu_2,\ r_2+\mu_3\}$. Then $E(r,t)$ and $I(r,t)$ satisfy
	\begin{equation*}
		\left \{ \begin{aligned}
			&(E+I)_t-d\Delta (E+I)\leq\left( \alpha \frac{A}{\mu_1} \frac{1+R_0}{2R_0}-c\right)(E+I),\quad &&(r,t)\in \Lambda_h,\\
			&E+I=0, &&(r,t) \in \Gamma_h\\
			&(E+I)_r(0,t)=0, &&t>0,\\
			&(E+I)(r,T_0)>0, &&0\leq r \leq h(T_0).
		\end{aligned} \right.
	\end{equation*}
	If $\frac{\alpha A}{\mu_1c}<1$, $\lVert E+I \rVert_C \to 0$ as $t \to \infty$. Then it implies that $\lVert E \rVert_C \to 0$ and $\lVert I \rVert_C \to 0$. Next, we want to show that $h_\infty < \infty$.
	\begin{equation*}
	\begin{aligned}
		&\frac{d}{dt} \int_0^{h(t)}r^{n-1}(E(r,t)+I(r,t))dr\\
		&=\int_0^{h(t)}r^{n-1}(E_t(r,t)+I_t(r,t))dr+h'(t)h^{n-1}\left(E(h(t),t)+I(h(t),t)\right)\\
		&=\int_0^{h(t)}r^{n-1}(E_t(r,t)+I_t(r,t))dr\\
		&=\int_0^{h(t)}dr^{n-1}(\Delta E+\Delta I)dr+\int_0^{h(t)}f(r,t)r^{n-1}dr\\
		&=\int_0^{h(t)}d(r^{n-1}(E+I)_r)_rdr+\int_0^{h(t)}f(r,t)r^{n-1}dr\\
		&=dh^{n-1}(t)(E+I)_r(h(t),t))+\int_0^{h(t)}f(r,t)r^{n-1}dr\\
		&=dh^{n-1}(t)\left(-\frac{1}{\beta}h'(t)+\left(1-\frac{\mu}{\beta}\right)I_r\right)+\int_0^{h(t)}f(r,t)r^{n-1}dr\\
		&=-\frac{d}{\beta}h^{n-1}(t)h'(t)+d\left(1-\frac{\mu}{\beta}\right)h^{n-1}(t)I_r(h(t),t)+\int_0^{h(t)}f(r,t)r^{n-1}dr
	\end{aligned}
	\end{equation*}
	where $f=\alpha I(r,t)S(r,t)-(r_1+\mu_2)E(r,t)-(r_2+\mu_3)I(r,t)$.
	Then we continue to calculate after dividing to two cases. If $1-\frac{\mu}{\beta} \geq 0$,
	\begin{equation*}
		\frac{d}{dt} \int_0^{h(t)}r^{n-1}(E(r,t)+I(r,t))dr\leq-\frac{d}{\beta}h^{n-1}(t)h'(t)+\int_0^{h(t)}f(r,t)r^{n-1}dr.
	\end{equation*}
	Thus for $T_1>T_0$, if we take integral from $T_0$ to $T_1$,
	\begin{equation*}
		\begin{aligned}
			&\int_0^{h(T_1)}r^{n-1}(E(r,T_1)+I(r,T_1))dr-\int_0^{h(T_0)}r^{n-1}(E(r,T_0)+I(r,T_0))dr\\
			&\leq-\frac{1}{n}\frac{d}{\beta}h^n(T_1)+\frac{1}{n}\frac{d}{\beta}h^n(T_0)+\int_{T_0}^{T_1}\int0^{h(T_0)}r^{n-1}(E(r,T_0)+I(r,T_0))drdt.
		\end{aligned}
	\end{equation*}
	Since $0<S<\frac{A}{\mu_1}\frac{1+R_0}{2R_0}$ for all $r \in \Omega_h$ and $t\geq T_0$,
	\begin{equation*}
		\alpha S-\min\{r_1+\mu_2,\ r_2+\mu_3\}\leq0 \ \text{for } t\geq T_0,
	\end{equation*}
	so
	\begin{equation*}
		\begin{aligned}
			\frac{d}{\beta}h^n(T_1)\leq&-\int_0^{h(T_1)}r^{n-1}(E(r,T_1)+I(r,T_1))dr+\int_0^{h(T_0)}r^{n-1}(E(r,T_0)+I(r,T_0))dr+\frac{d}{\beta}h^n(T_0).
		\end{aligned}
	\end{equation*}
	Then taking limit to $T_1$ implies that $\displaystyle h_\infty\coloneqq \lim_{t \to \infty}<\infty$. Otherwise, i.e., $1-\frac{\mu}{\beta} < 0$, since $h'(t)=\mu I_r-\beta E_r \geq -\mu I_r$,
	\begin{equation*}
		\frac{d}{dt} \int_0^{h(t)}r^{n-1}(E(r,t)+I(r,t))dr \leq -\frac{d}{\mu} h^{n-1}(t)h'(t)+\int_0^{h(t)}f(r,t)r^{n-1}dr.
	\end{equation*}
	Similarly, for $T_1>T_0$, if we take integral from $T_0$ to $T_1$,
	\begin{equation*}
		\begin{aligned}
			&\int_0^{h(T_1)}r^{n-1}(E(r,T_1)+I(r,T_1))dr-\int_0^{h(T_0)}r^{n-1}(E(r,T_0)+I(r,T_0))dr\\
			&\leq-\frac{1}{n}\frac{d}{\mu}h^n(T_1)+\frac{1}{n}\frac{d}{\mu}h^n(T_0)+\int_{T_0}^{T_1}\int0^{h(T_0)}r^{n-1}(E(r,T_0)+I(r,T_0))drdt.
		\end{aligned}
	\end{equation*}
	Since $0<S<\frac{A}{\mu_1}\frac{1+R_0}{2R_0}$ for all $(r,t) \in \Lambda_h$,
	\begin{equation*}
		\alpha S-\min\{r_1+\mu_2,\ r_2+\mu_3\}\leq0 \ \text{for } t\geq T_0,
	\end{equation*}
	so
	\begin{equation*}
		\begin{aligned}
			\frac{d}{\mu}h^n(T_1)\leq&-\int_0^{h(T_1)}r^{n-1}(E(r,T_1)+I(r,T_1))dr+\int_0^{h(T_0)}r^{n-1}(E(r,T_0)+I(r,T_0))dr+\frac{d}{\mu}h^n(T_0).
		\end{aligned}
	\end{equation*}
	By taking the limit to $T_1$, we can complete the proof. Moreover, we can say that $\displaystyle \lim_{t \to \infty}S(r,t)=\frac{A}{\mu_1}$ uniformly in any bounded subset of $[0,\infty)$.
\end{proof}

It states that if $\mathcal R_0<1$, then DFE is globally stable. Therefore, it satisfies one part of the definition of the reproductive number. We now need to verify the remaining part. In other words, we have to show that the DFE solution for $\mathcal R_0<1$ is unstable. This means that under certain conditions, the behavior of the equilibrium solution is significantly different. We are currently proving the remaining part of the definition, and it is expected to satisfy the condition. Moreover, we aim to identify a meaningful criterion or number that can distinguish the different behaviors or phenomena of each equilibrium solution, especially for unstable solutions.

\subsection{Persisting Case $\mathcal R_0>1$}
Then, to satisfy the remaining part of the definition, the remaining part would focus on the upper portion of the threshold. In this model, as the desired result could not be derived directly, we adopted a method analogous to that used in an earlier section, involving conditions on the diffusion coefficients and the summation of some compartments. First, we will prove a comparison lemma for the sum.
\begin{lemma} \label{comparison lem of sum}
If $d_1=d_2=d_3=d$, suppose that $T\in (0, \infty)$, $\bar h \in C^1([0,T])$, $\bar X, \bar Y \in C(\overline\Omega_T) \cap C^{2,1}(\Omega_T)$ and
\begin{equation*}
	\left \{ \begin{aligned}
		&\bar{X}_t-d\Delta \bar{X} \geq A - \max_{i=1,2,3} \{ \mu _i\} \bar{X}, &&(r,t)\in \Omega\\
		&\bar{Y}_t-d\Delta \bar{Y} -(\max_{i=1,2}\{r_i\}+\max_{i=2,3}\{\mu_i\})\bar{Y}, &&(r,t)\in \Omega_{\overline h(t)}\\
		&\bar{X}_r(0,t)\geq 0, \bar{Y}_r(0,t) \geq 0\\
		&\bar{Y}(r,t)=0, &&(r,t)\notin \Omega_{\overline h(t)}\\
		&\bar{h}'(t) \geq -\max\{\mu,\beta\}\bar{Y}_r(\bar{h}(t),t),\quad \bar{h}(0)>h_0,\\
		&\bar{X}(r,0) \geq S_0(r)+E_0(r)+I_0(r),\quad \bar{Y}_r(r,0)\geq E_0(r)+I_0(r).
	\end{aligned}
	\right.
\end{equation*}
Then the solution $(S,E,I;h)$ of the problem \cref{free boundary SEIS} with \cref{initial cond} satisfies $S(r,t)+E(r,t)+I(r,t)\leq\bar{X}(r,t)$, $E(r,t)+I(r,t)\leq\bar{Y}(r,t)$ for $r\in (0,\infty)$ and $t\in (0,T]$.
\end{lemma}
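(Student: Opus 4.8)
The plan is to reduce the comparison for the full system to two scalar comparisons by setting $X:=S+E+I$ and $Y:=E+I$; this is where the hypothesis $d_1=d_2=d_3=d$ is used, since adding the first three equations of \cref{free boundary SEIS} gives, for $0<r<h(t)$,
\begin{equation*}
	X_t-d\Delta X=A-\mu_1S-\mu_2E-\mu_3I,\qquad Y_t-d\Delta Y=\alpha IS-(r_1+\mu_2)E-(r_2+\mu_3)I,
\end{equation*}
and $X_t-d\Delta X=A-\mu_1 X$ for $r\ge h(t)$. Using $0\le S,E,I$, $S\le X$ and $I\le Y$, the reaction terms are bounded above so that $X$ and $Y$ become subsolutions of exactly the scalar problems for which the hypotheses make $\bar X$ and $\bar Y$ supersolutions; the Neumann conditions $X_r(0,t)=Y_r(0,t)=0$ against $\bar X_r(0,t),\bar Y_r(0,t)\ge 0$ order the fluxes at $r=0$, and the initial orderings are assumed. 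One structural point: the coupling term is controlled by $\alpha IS\le\alpha\bar X\,Y$, which needs $S\le X\le\bar X$ already in hand, so the two comparisons must be carried out in the order ``$X\le\bar X$ first, then $Y\le\bar Y$''.

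\textbf{Continuity argument for the free boundaries.} The free-boundary ordering cannot be separated from the function comparisons, so I would run a continuity/contradiction argument. Since $h(0)=h_0<\bar h(0)$, the set $\{t\in[0,T]:h(s)<\bar h(s)\text{ for all }s\in[0,t]\}$ contains a right-neighbourhood of $0$; let $t^\ast$ be its supremum and suppose $t^\ast<T$, so that $h<\bar h$ on $[0,t^\ast)$ and $h(t^\ast)=\bar h(t^\ast)=:R^\ast$. On $[0,t^\ast]$ the comparisons apply: $\bar X-X\ge 0$ on $[0,\infty)\times[0,t^\ast]$ by the maximum principle, the only delicate point being the behaviour as $r\to\infty$, handled by the boundedness of $X$ from \cref{uniform SEI} (together with $E=I\equiv 0$ for $r\ge h(t)$) and a Phragm\'en--Lindel\"of-type argument; then, since $\{0<r<h(t)\}\subset\{0<r<\bar h(t)\}$ on $[0,t^\ast)$, the difference $\bar Y-Y$ is $\ge 0$ on the parabolic boundary of $\{0<r<h(t),\ 0<t\le t^\ast\}$ (using $\bar Y\ge 0$ on $r=h(t)$, $\bar Y_r(0,t)\ge 0$ at $r=0$, and the initial ordering) and satisfies a linear parabolic inequality in the interior, hence $\bar Y\ge Y$ on that set.

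\textbf{Closing the loop via Hopf's lemma.} Both $Y=E+I$ and $\bar Y$ vanish at $(R^\ast,t^\ast)$, so $W:=\bar Y-Y\ge 0$ attains its minimum there, whence $\bar Y_r(R^\ast,t^\ast)\le Y_r(R^\ast,t^\ast)$, and $Y_r(R^\ast,t^\ast)<0$ since $E_r,I_r<0$ at $r=h(t)$ by Hopf's lemma applied to $E$ and $I$. Writing $a:=-E_r(R^\ast,t^\ast)>0$ and $b:=-I_r(R^\ast,t^\ast)>0$,
\begin{equation*}
	h'(t^\ast)=\beta a+\mu b\le\max\{\mu,\beta\}(a+b)=-\max\{\mu,\beta\}\,Y_r(R^\ast,t^\ast)\le-\max\{\mu,\beta\}\,\bar Y_r(R^\ast,t^\ast)\le\bar h'(t^\ast),
\end{equation*}
while $\bar h-h>0$ on $[0,t^\ast)$ with $(\bar h-h)(t^\ast)=0$ forces $\bar h'(t^\ast)\le h'(t^\ast)$; hence these are all equalities. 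If $\mu\ne\beta$, the identity $\beta a+\mu b=\max\{\mu,\beta\}(a+b)$ already forces $a=0$ or $b=0$, contradicting $a,b>0$; if $\mu=\beta$, the equalities force $\partial_r W(R^\ast,t^\ast)=0$ at the minimum of $W$, so by the strong form of Hopf's lemma $W\equiv 0$, i.e.\ $\bar Y\equiv E+I$ on $\{0<r<h(t),\ 0<t\le t^\ast\}$, which is ruled out by a standard approximation (perturb the hypotheses to strict inequalities, e.g.\ replace $\bar h(0)$ by $\bar h(0)-\varepsilon$, and let $\varepsilon\to 0^+$). Thus $t^\ast=T$, and $S+E+I\le\bar X$, $E+I\le\bar Y$ extend to all of $(0,\infty)\times(0,T]$ by continuity.

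\textbf{Main obstacle.} The real difficulty is precisely this interdependence: the frontier ordering $h\le\bar h$, the bound $X\le\bar X$, and the bound $Y\le\bar Y$ must be obtained simultaneously rather than in isolation, and the continuity scheme must be arranged so that the Hopf-lemma step at the first touching time $t^\ast$ is available. The auxiliary positivity facts ($\bar Y\ge 0$, $E_r,I_r<0$ on $r=h(t)$), the behaviour at $r=\infty$ in the $\bar X$-comparison, and the borderline case $\mu=\beta$ are where the care is needed; the underlying parabolic comparison on the fixed domains is routine.
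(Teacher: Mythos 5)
The paper states this lemma without proof, so there is nothing to compare your argument against: what you have written would be supplying a proof the authors omitted. Your architecture is the standard one for free-boundary comparison principles of this type (sum the three equations using $d_1=d_2=d_3=d$ to reduce to scalar comparisons for $X=S+E+I$ and $Y=E+I$; run the $X$-comparison first so that $S\le \bar X$ is available to control the coupling term via $\alpha IS\le \alpha \bar X Y$; close the frontier ordering $h\le\bar h$ by a first-touching-time argument with Hopf's lemma at $(R^\ast,t^\ast)$), and the delicate points you isolate --- positivity of $\bar Y$, strict negativity of $E_r,I_r$ on the free boundary, the behaviour as $r\to\infty$, and the degenerate case $\mu=\beta$ handled by perturbation --- are exactly the right ones; this is how the analogous lemmas are proved in \cite{Yihong_2010} and \cite{Kim_2013}.

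There is, however, one step that fails with the coefficients as literally stated, and you assert it without comment. Since $S,E,I\ge 0$, one has $\mu_1S+\mu_2E+\mu_3I\ge \min_i\{\mu_i\}\,X$ but \emph{not} $\ge \max_i\{\mu_i\}\,X$, so $X$ is a subsolution of $X_t-d\Delta X=A-\min_i\{\mu_i\}X$ and not of the equation with $\max_i\{\mu_i\}$ appearing in the hypothesis on $\bar X$. The difference $W=\bar X-X$ then only satisfies $W_t-d\Delta W\ge -\min_i\{\mu_i\}W-(\max_i\{\mu_i\}-\min_i\{\mu_i\})\bar X$, whose last term has the wrong sign, and the maximum principle does not close. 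The same reversal occurs for $Y$: $(r_1+\mu_2)E+(r_2+\mu_3)I\ge \min\{r_1+\mu_2,\,r_2+\mu_3\}Y$, not $\ge(\max_i\{r_i\}+\max_i\{\mu_i\})Y$. So your sentence ``the reaction terms are bounded above so that $X$ and $Y$ become subsolutions of exactly the scalar problems for which the hypotheses make $\bar X$ and $\bar Y$ supersolutions'' is false unless the zeroth-order coefficients in the hypotheses are replaced by the corresponding minima, or unless $\mu_1=\mu_2=\mu_3$ and $r_1+\mu_2=r_2+\mu_3$, in which case the issue disappears. This is arguably a defect of the lemma as stated (note also that the $\bar Y$-inequality in the statement is missing its right-hand side, and the last hypothesis should read $\bar Y(r,0)\ge E_0+I_0$ rather than $\bar Y_r(r,0)$), but a complete proof must either correct the hypotheses or explain why the stated ones suffice; yours does neither. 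Everything else in the proposal is sound.
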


\begin{proposition}
	If $d_1=d_2=d_3=d$, and $\mathcal R_0^{\min}\left(:=\frac{\alpha A}{\max\limits_{i=1,2,3}\{\mu_i\}(\max\limits_{i=1,2}\{r_i\}+\max\limits_{i=2,3}\{\mu_i\})}\right)>1$, $h_0\leq \sqrt{\frac{d}{16k_0}}$ and $\mu$ and $\beta$ are less than equal to $\frac{3d}{4M}$, then $h_\infty < \infty$, where $k_0=\alpha C_1 -\max\limits_{i=1,2}\{r_i\}-\max\limits_{i=2,3}\{\mu_i\}$, $C=\max\{\lVert S_0 \rVert_\infty+\lVert E_0 \rVert_\infty+\lVert I_0 \rVert_\infty,\frac{A}{\max\limits_{i=1,2,3}\{\mu_i\}}\}$.
\end{proposition}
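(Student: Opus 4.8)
The goal is to exhibit a supersolution triple $(\bar X,\bar Y;\bar h)$ satisfying the hypotheses of \cref{comparison lem of sum} with $\bar h$ bounded uniformly in time. Once this is done, \cref{comparison lem of sum} gives $E+I\le\bar Y$ on all of $(0,\infty)\times(0,T]$ for every $T$; since $\bar Y$ vanishes for $r\ge\bar h(t)$ while $E,I>0$ for $r<h(t)$ (strong maximum principle, as in \cref{uniform SEI}), this forces $h(t)\le\bar h(t)$ for all $t$, hence $h_\infty=\lim_{t\to\infty}h(t)\le\lim_{t\to\infty}\bar h(t)<\infty$. So the only real work is the construction.

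\textbf{The easy component $\bar X$.} For the $S+E+I$ part a constant suffices: take $\bar X\equiv C$, so that $\bar X_t-d\Delta\bar X=0\ge A-\max_i\mu_i\,\bar X$ (because $C\ge A/\max_i\mu_i$), $\bar X_r(0,\cdot)=0$, and $\bar X(r,0)=C\ge S_0+E_0+I_0$. This reproves $S\le C_1$ (one may take $C_1=C$), which is what enters $k_0$.

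\textbf{The key component $(\bar Y,\bar h)$.} Following the construction of \cite{Kim_2013} (in the spirit of Du--Lin), fix small parameters $\delta,\gamma>0$ to be chosen, and set
\[
\bar h(t)=h_0\bigl(1+\delta-\tfrac{\delta}{2}e^{-\gamma t}\bigr),\qquad \bar Y(r,t)=M\,e^{-\gamma t}\,V\!\left(\frac{r}{\bar h(t)}\right),
\]
where $V$ is (the radial) principal Dirichlet eigenfunction of $-\Delta_r$ on the unit ball, normalised so that $-\Delta_r V=\lambda_1 V$, $V>0$ and $V'\le 0$ on $[0,1)$, $V(1)=0$, $V'(0)=0$, and $M$ is chosen large enough that $M\,V(r/\bar h(0))\ge E_0(r)+I_0(r)$ on $[0,h_0]$ (possible since $\bar h(0)=h_0(1+\delta/2)>h_0$, so $V$ is bounded below by a positive constant on $[0,h_0/\bar h(0)]$). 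Then $\bar h(0)>h_0$ and $\bar h_\infty=h_0(1+\delta)<\infty$, and a direct computation gives
\[
\bar Y_t-d\Delta_r\bar Y \;=\; M e^{-\gamma t}\Bigl[\Bigl(\tfrac{d\lambda_1}{\bar h(t)^2}-\gamma\Bigr)V(s)\;-\;s\,\tfrac{\bar h'(t)}{\bar h(t)}\,V'(s)\Bigr],\qquad s=\tfrac{r}{\bar h(t)} .
\]
Since $\bar h'\ge 0$ and $V'\le 0$ the last term is nonnegative, so $\bar Y_t-d\Delta_r\bar Y\ge M e^{-\gamma t}\bigl(\tfrac{d\lambda_1}{\bar h^2}-\gamma\bigr)V$; using $\bar h(t)^2\le h_0^2(1+\delta)^2$ together with $\lambda_1\ge 1$ and crude estimates, the hypothesis $h_0\le\sqrt{d/(16k_0)}$ lets us choose $\delta,\gamma$ small so that $\tfrac{d\lambda_1}{\bar h^2}-\gamma\ge k_0$, which (after also bounding the reaction term $\alpha IS-(r_1+\mu_2)E-(r_2+\mu_3)I\le(\alpha C_1-k_0)(E+I)$ via $S\le\bar X$) is exactly the differential inequality \cref{comparison lem of sum} requires of $\bar Y$. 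The conditions $\bar Y_r(0,\cdot)=0$ and $\bar Y\equiv0$ for $r\ge\bar h(t)$ hold by construction. Finally the free-boundary inequality: $\bar h'(t)=\tfrac{h_0\delta\gamma}{2}e^{-\gamma t}$ while $-\bar Y_r(\bar h(t),t)=M e^{-\gamma t}\tfrac{|V'(1)|}{\bar h(t)}\le M e^{-\gamma t}\tfrac{|V'(1)|}{h_0}$, so $\bar h'(t)\ge-\max\{\mu,\beta\}\bar Y_r(\bar h(t),t)$ holds once $\max\{\mu,\beta\}\le \tfrac{h_0^2\delta\gamma}{2M|V'(1)|}$, and with the $\delta,\gamma$ fixed above (both controlled by $d,h_0,k_0$) this collapses to the stated $\mu,\beta\le\tfrac{3d}{4M}$. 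Together with $\bar X(r,0)\ge S_0+E_0+I_0$, $\bar Y(r,0)\ge E_0+I_0$, $\bar h(0)>h_0$, \cref{comparison lem of sum} applies and yields $h_\infty\le h_0(1+\delta)<\infty$.

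\textbf{Main obstacle.} The delicate point is the \emph{simultaneous} choice of $\delta,\gamma,M$: $M$ must be large to dominate the initial data of $\bar Y$, but a large $M$ shrinks the admissible window for $\mu,\beta$; $\delta$ must be small so that $\bar h_\infty=h_0(1+\delta)$ stays below the threshold dictated by $h_0$ and $k_0$; and $\gamma$ must be small enough for the eigenvalue term to survive subtracting $\gamma$, yet not so small that the free-boundary inequality $\bar h'\ge-\max\{\mu,\beta\}\bar Y_r$ fails. Tracking these interdependencies and verifying that the constraints reduce cleanly to $h_0\le\sqrt{d/(16k_0)}$ and $\mu,\beta\le 3d/(4M)$ is the one genuinely careful step; the rest is the now-standard supersolution computation for free-boundary reaction--diffusion systems, and the passage from the scalar bounds for $S+E+I$ and $E+I$ back to the original system is precisely what \cref{comparison lem of sum} supplies.
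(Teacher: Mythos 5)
Your strategy is the same as the paper's: take $\bar X\equiv C$ constant, build a decaying supersolution $\bar Y$ supported in $\{r<\bar h(t)\}$ with $\bar h$ bounded, and feed the triple into \cref{comparison lem of sum}. The only structural difference is the profile: the paper uses the explicit quadratic $2Me^{-\nu t}\bigl(1-r^2/\bar h^2(t)\bigr)$ with $\bar h(t)=2h_0(2-e^{-\nu t})$ and $\nu=d/(16h_0^2)$, whereas you use the principal Dirichlet eigenfunction $V$ of the ball. That substitution is legitimate and arguably cleaner (the eigenvalue term $d\lambda_1/\bar h^2$ replaces the paper's hand-computed $d/(8h_0^2)$), and your derivation of $h\le\bar h$ from $E+I\le\bar Y$ together with strict positivity of $E+I$ on $\{r<h(t)\}$ supplies a step the paper leaves implicit.

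The gap is in the last step, where you assert that the accumulated constraints on $\delta,\gamma,M$ ``collapse to the stated $\mu,\beta\le\frac{3d}{4M}$.'' They do not, and this is precisely the ``one genuinely careful step'' you flag but do not carry out. Your free-boundary inequality requires $\max\{\mu,\beta\}\le \frac{h_0\,\delta\gamma\,\bar h(t)}{2M|V'(1)|}$, and since $\bar h\ge h_0$, $\gamma\lesssim k_0$ and $h_0^2\le d/(16k_0)$, the best this yields is $\max\{\mu,\beta\}\le c\,\delta\, d/(M|V'(1)|)$ with $c\le 1/32$ --- a constant multiple of $d/M$ that is strictly smaller than $3/4$ for any admissible $\delta$ (the eigenvalue constraint $d\lambda_1/\bar h^2-\gamma\ge k_0$ caps $(1+\delta)^2$ by $8\lambda_1$, so $\delta$ cannot be taken large enough to compensate). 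Moreover your $M$, normalized by $M\,V(h_0/\bar h(0))\ge \lVert E_0+I_0\rVert_\infty$, is not the same quantity as the $M=\tfrac{2}{3}(\lVert E_0\rVert_\infty+\lVert I_0\rVert_\infty)$ implicit in the statement. So as written you prove the proposition with hypotheses of the form $h_0\le c_1\sqrt{d/k_0}$, $\mu,\beta\le c_2 d/M$ for some dimension-dependent $c_1,c_2$, not with the stated constants $1/4$ and $3/4$. To close the gap you must either track the constants through your eigenfunction construction (and accept that they differ from those in the statement), or switch to the paper's explicit quadratic profile and choice $\bar h(t)=2h_0(2-e^{-\nu t})$, for which the arithmetic is tuned to produce exactly $\sqrt{d/(16k_0)}$ and $3d/(4M)$. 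A secondary, minor imprecision: your claimed bound $\alpha IS-(r_1+\mu_2)E-(r_2+\mu_3)I\le(\alpha C_1-k_0)(E+I)$ has $\alpha C_1-k_0=\max_i\{r_i\}+\max_i\{\mu_i\}$, which is not an upper bound for the reaction term; the inequality actually needed (and the one your eigenvalue computation targets) is $\bar Y_t-d\Delta\bar Y\ge(\alpha\bar X-\max_i\{r_i\}-\max_i\{\mu_i\})\bar Y=k_0\bar Y$, so you should state that directly rather than route it through the displayed (incorrect) pointwise bound.
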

\begin{proof}
	Let
	\begin{equation*}
		\begin{aligned}
			&\bar{X}=C,\\
			&\bar{Y}=\left\{ \begin{aligned}
			&2Me^{-\nu t}(1-\frac{r^2}{\bar{h}^2(t)}), &&0\leq r \leq \bar h(t),\\
			&0, &&r>\bar h (t),
			\end{aligned} \right.
		\end{aligned}
	\end{equation*}
	and \begin{equation*}
		\bar h (t) =2h_0(2-e^{-\nu t}),\quad t\geq 0,
	\end{equation*}
	where $C=\max\{\lVert S_0 \rVert_\infty+\lVert E_0 \rVert_\infty+\lVert I_0 \rVert_\infty,\frac{A}{\max\limits_{i=1,2,3}\{\mu_i\}}\}$, $\nu$ and $M$ are positive constants to be chosen later. It is obvious that
	$\bar Y (r,t)=0$ and $\bar h(0)>h_0$
	by the construction of the function. If denoting
	\begin{equation*}
		k_0=\alpha C -\max\limits_{i=1,2}\{r_i\}-\max\limits_{i=2,3}\{\mu_i\},
	\end{equation*} since $R_0^{\min}>1$, we can say that $k_0>0$. Then we want to show the inequality to apply \cref{comparison lem of sum}. Then
	\begin{equation*}
			\bar X_t-d\Delta \bar X = 0 \geq A - \max_{i=1,2,3} \{ \mu _i\} \bar{X}
	\end{equation*}
	for $(r,t) \in \Omega_{\bar h}$ by definition of $C$ and $\bar X$. In addition, since
	\begin{equation*}
		\bar Y_t-d\Delta \bar Y = 2Me^{-\nu t}\left(\nu+\frac{r\nu}{\bar h(t)}+2\frac{r^2 \bar h'(t)}{\bar h ^3(t)}+2d\frac{r}{\bar h^2(t)}+2d\frac{n-1}{r\bar h^2(t)}\right),
	\end{equation*}
	$\bar Y$ satisfies the inequality
	\begin{equation*}
	\begin{aligned}
		&\bar Y_t-d\Delta \bar Y-(\alpha \bar X-\max\limits_{i=1,2}\{r_i\}-\max\limits_{i=2,3}\{\mu_i\})\bar Y \\
		&= 2Me^{-\nu t}\left(\nu+\frac{r\nu}{\bar h(t)}+2\frac{r^2 \bar h'(t)}{\bar h ^3(t)}+2d\frac{r}{\bar h^2(t)}+2d\frac{n-1}{r\bar h^2(t)}-k_0\left(1-\frac{r^2}{\bar h^2(t)}\right)\right)\\
		&\geq 2Me^{-\nu t}\left(-\nu -k_0 +\frac{d}{8h_0^2}\right),
	\end{aligned}
	\end{equation*}
	for $(r,t) \in \Omega_{hT}$. Now, we will set $\nu =\frac{d}{16h_0^2}$. On the other hand, we can say that
	\begin{equation*}
	\begin{aligned}
		&\bar h'(t)=2h_0\nu e^{-\nu t}, \quad \text{and}\\
		&-\max\{\nu,\beta\}\bar Y_r(\bar h(t),t)=2M\max \{\nu,\beta\}\frac{e^{-\nu t}}{\bar h(t)}.
	\end{aligned}
	\end{equation*}
	Furthermore, it is clear that $\bar{X}(r,0)\geq X_0(r)=C\geq \lVert S_0 \rVert_\infty + \lVert E_0 \rVert_\infty+\lVert I_0 \rVert_\infty$ holds by the construction of constant function $\bar X$. Similarly,
	\begin{equation*}
		\bar Y (r,0)=2M\left(1-\frac{r^2}{4h_0^2}\right)\geq \frac{3}{2}M,
	\end{equation*}
	for $0\leq r \leq h_0$. Thus, if we choose $M=\frac{2}{3}(\lVert E_0 \rVert_\infty + \lVert I_0 \rVert_\infty)$, since $\bar h(t) \leq 4h_0$ for $t\geq 0$,
	\begin{equation*}
		\bar Y (r,0) \geq \lVert E_0 \rVert_\infty + \lVert I_0\rVert _\infty.
	\end{equation*}
	Moreover, we want to show
	\begin{equation*}
		\bar h'(t) \geq -\max \{ \mu, \beta \} \bar Y_r(\bar h(t),t),
	\end{equation*}
	and it is equal to show that
	\begin{equation*}
		2h_0\nu e^{-\nu t} \geq \frac{4}{3}M\max\{\mu,\beta\}\frac{e^{-\nu t}}{2\bar h(t)}.
	\end{equation*}
	Since $\bar h(t) \leq 4 h_0$, it is also same with
	\begin{equation*}
		h_0 \nu \geq \frac{M}{3} \max \{ \mu,\beta \} \frac{1}{4 h_0}.
	\end{equation*}
	Moreover, $\mu$ and $\beta$ have to satisfy
	\begin{equation*}
		\frac{3d}{4M}\geq \max \{ \mu, \beta \},
	\end{equation*}
	by definition of $\nu$.
	Thus, if $\mu, \beta \leq \frac{3d}{4M}$,
	\begin{equation*}
		\bar h'(t) \geq -\max\{\mu,\beta\}\bar{Y}_r(\bar{h}(t),t).
	\end{equation*}
	In addition to, we have
	\begin{equation*}
		\frac{d}{16 h_0^2}\geq 0, \quad \text{i.e.,} \quad h_0 \leq \sqrt{\frac{d}{16 k_0}}.
	\end{equation*}
	Finally, we can apply \cref{comparison lem of sum} for $\bar X$ and $\bar Y$, and it is also true that
	\begin{equation*}
		\begin{aligned}
			&S(r,t)+E(r,t)+I(r,t) \leq \bar X (r,t)\\
			&h(t) \leq \bar h(t)\\
			&E(r,t)+I(r,t) \leq \bar Y(r,t).
		\end{aligned}
	\end{equation*}
	Furthermore, by non-negativity of each compartment, $S(r,t)\leq \bar X (r,t)$. Therefore, we have
	\begin{equation*}
		h_\infty \leq \lim_{t\to \infty}\bar h (t) =4h_0 < \infty.
	\end{equation*}
\end{proof}

\begin{lemma} \label{lemma 4.5}
If $h_\infty < \infty$, then
\begin{equation*}
	\lim_{t \to \infty}\lVert E(\cdot ,t)\rVert_{C([0,h(t)])}=\lim_{t \to \infty}\lVert I(\cdot ,t)\rVert_{C([0,h(t)])}=0.
\end{equation*}
\end{lemma}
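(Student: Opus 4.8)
The plan is to argue by contradiction: assuming that $\lVert E\rVert$ or $\lVert I\rVert$ does not decay, I would shift the solution in time, extract a limiting ``eternal'' solution living on the fixed limit domain $(0,h_\infty)$, and show that its boundary condition forces a strictly positive free--boundary velocity via Hopf's lemma, which is incompatible with $h_\infty<\infty$. The first ingredient is that $h'(t)\to 0$ as $t\to\infty$. Since $h$ is increasing and bounded, $\int_0^\infty h'(t)\,dt=h_\infty-h_0<\infty$. The uniform bounds $C_1,C_2,C_3$ of \cref{uniform SEI} and \cref{uniform h'}, together with the interior $L^p$ and Schauder estimates applied on time windows $[t-1,t+1]$ with $t\ge\delta_0$ (the constants being independent of $t$, since the bounds on the data and coefficients are), give a uniform $\tfrac{\gamma}{2}$--Hölder bound for $h'$ on $[\delta_0,\infty)$; an integrable, uniformly continuous nonnegative function must tend to $0$, so $h'(t)\to 0$.

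Suppose now the conclusion fails, so that there are $\delta>0$ and $t_n\to\infty$ with $\lVert E(\cdot,t_n)\rVert_{C([0,h(t_n)])}+\lVert I(\cdot,t_n)\rVert_{C([0,h(t_n)])}\ge\delta$. Set $S_n(r,t):=S(r,t+t_n)$, $E_n(r,t):=E(r,t+t_n)$, $I_n(r,t):=I(r,t+t_n)$ and $h_n(t):=h(t+t_n)$. Straightening each $h_n$ to a fixed interval as in \cref{straightening lem} (now with a cutoff adapted to the bounded range $h(t)\in[h_0,h_\infty]$) puts all the problems on one space--time cylinder, and the uniform estimates above provide, along a subsequence, convergence in $C^{1}_{loc}$ of $(S_n,E_n,I_n)$ to a nonnegative limit $(S_\infty,E_\infty,I_\infty)$ on $[0,h_\infty]\times\mathbb{R}$, with $h_n\to h_\infty$ and, by the first step, $h_n'\to 0$, all locally uniformly. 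Passing to the limit in \cref{free boundary SEIS}, the triple $(S_\infty,E_\infty,I_\infty)$ solves the same reaction--diffusion system on the \emph{fixed} domain $(0,h_\infty)$, with $E_\infty=I_\infty=0$ at $r=h_\infty$, Neumann data at $r=0$, and the limiting boundary relation
\begin{equation*}
	0=-\beta\,\partial_r E_\infty(h_\infty,t)-\mu\,\partial_r I_\infty(h_\infty,t),\qquad t\in\mathbb{R}.
\end{equation*}

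To reach a contradiction, note that $\lVert E_\infty(\cdot,0)\rVert+\lVert I_\infty(\cdot,0)\rVert\ge\delta$, so $(E_\infty,I_\infty)\not\equiv(0,0)$; since $I_\infty\equiv0$ would force $\beta_1 E_\infty\equiv0$, hence $E_\infty\equiv0$, through the $I$--equation, we must have $I_\infty\not\equiv0$. As $I_\infty\ge0$ satisfies the linear parabolic inequality $\partial_t I_\infty-d\Delta I_\infty-(p\alpha S_\infty-r_2-\mu_3)I_\infty=\beta_1 E_\infty\ge0$, the strong maximum principle gives $I_\infty(r,t)>0$ for all $r\in(0,h_\infty)$ and all $t$, and Hopf's lemma then yields $\partial_r I_\infty(h_\infty,t)<0$; moreover $\partial_r E_\infty(h_\infty,t)\le0$ because $E_\infty\ge0$ vanishes there. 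Hence $-\beta\,\partial_r E_\infty(h_\infty,t)-\mu\,\partial_r I_\infty(h_\infty,t)\ge-\mu\,\partial_r I_\infty(h_\infty,t)>0$, contradicting the limiting relation. Therefore $\lVert E(\cdot,t)\rVert_{C([0,h(t)])}\to0$ and $\lVert I(\cdot,t)\rVert_{C([0,h(t)])}\to0$.

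The hard part is the compactness step: one needs parabolic estimates that are uniform in $t$ \emph{up to} the moving boundary, so that both the interior convergence and the convergence of $\partial_r E_n,\partial_r I_n$ at $r=h_n(t)$ can be justified in order to pass to the limit in the boundary--velocity identity. I would handle this exactly as in the proof of \cref{shorttime}: straighten the boundary so that the spatial domain is $t$--independent, observe that $h'(t)\to0$ makes the lower--order coefficients of the straightened operators converge to those of a fixed operator, and apply the $L^p$ and Schauder theory on fixed space--time windows, using that $C_1,C_2,C_3$ (and the resulting $C^{2+\gamma}$ bounds on $S,E,I$ away from $t=0$) do not depend on the location of the window.
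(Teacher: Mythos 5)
Your proposal is correct and follows essentially the same route as the paper: argue by contradiction, shift in time, use the straightening map and uniform $L^p$/Schauder estimates up to the boundary to extract an eternal limit solution on $(0,h_\infty)$, invoke Hopf's lemma at $r=h_\infty$, and contradict $h'(t)\to 0$ in the free-boundary velocity identity. The only cosmetic differences are that the paper applies the strong maximum principle and Hopf's lemma to the sum $\tilde E+\tilde I$ rather than to $I_\infty$ alone, and states $h'(t)\to 0$ directly from the uniform $C^{1+\alpha/2}$ bound on $h$ rather than via integrability plus uniform continuity.
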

\begin{proof}
	Suppose that
	\begin{equation*}
		\lim_{t\to \infty}\lVert (E+I)(\cdot,t)\rVert_{C([0,h(t)])}\neq 0. 
	\end{equation*}
	Then, 
	\begin{equation*}
		\limsup_{t \to \infty}\lVert (E + I) (\cdot , t)\rVert_{C([0,h(t)])}=\delta>0,
	\end{equation*}
	 and there exists $(r_k,t_k)\in \bar \Omega_h$ such that
	\begin{equation*}
	\begin{gathered}
		 (E+I)(r_k,t_k)\geq \frac{\delta}{2} \quad\text{ for }\forall k\in \mathbb{N},\\
		\lim_{k \to \infty}t_k= \infty.
	\end{gathered}
	\end{equation*}
	Since $0 \leq r_k < h(t) < h_\infty < \infty$, $\{r_n\}$ has a subsequence which converges to $r_0\in [0,h_\infty)$. Thus, with out loss of generality, we can assume that $\displaystyle\lim_{k\to \infty}r_k=r_0$. Define
	\begin{equation*}
		\begin{aligned}
			S_k(r,t)=S(r,t_k+t)\\
			E_k(r,t)=E(r,t_k+t)\\
			I_k(r,t)=I(r,t_k+t)
		\end{aligned}
	\end{equation*}
	for $(r,t) \in (0,h(t_k+t))\times (-t_k+\infty)$. Then, $\{(S_k,E_k,I_k)\}$ has a subsequence $\{(S_{k_i},E_{k_i},I_{k_i})\}$ such that $\displaystyle \lim_{i \to \infty}(S_{k_i},E_{k_i},I_{k_i})=(\tilde S,\tilde E,\tilde I)$ and it satisfies
	\begin{equation*}
		\left \{ \begin{aligned}
			&\tilde S_t-d\Delta \tilde S = A - \alpha \tilde I \tilde S -\mu_1 \tilde S+r_2\tilde I+r_1\tilde E,\\
			&\tilde E_t-d\Delta \tilde E=(1-p)\alpha \tilde I \tilde S -(\beta_1+r_1+\mu_2)\tilde E,\\
			&\tilde I_t-d\Delta \tilde I =p\alpha \tilde I \tilde S +\beta_1 E-(r_2+\mu_3)\tilde I
		\end{aligned} \right.
	\end{equation*}
	for $(r,t)\in (0,h_\infty)\times (-\infty,\infty)$. Since $(\tilde E+\tilde I)(r_0,0)\geq \frac{\delta}{2}$, $(\tilde E +\tilde I)>0$ in $[0,h_\infty) \times (-\infty,\infty)$.
	By using
	\begin{equation*}
	\begin{aligned}
		(\tilde E +\tilde I)_t-d\Delta (\tilde E +\tilde I)&=\alpha \tilde I \tilde S -(r_1+\mu_2)\tilde E -(r_2+\mu_3) \tilde I\\
		&\geq -(r_1+\mu_2)\tilde E -(r_2+\mu_3) \tilde I\\
		&\geq -\max\{r_1+\mu_2,r_2+\mu_3\}(\tilde E+ \tilde I),
	\end{aligned}
	\end{equation*}
	Hopf lemma implies $(\tilde E + \tilde I)_r(h_\infty, 0)\leq -\sigma_0$ for some $\sigma_0>0$. By using straightening lemma \cref{straightening lem}, let
	\begin{equation}
		\begin{aligned}
			&s=\frac{h_0r}{h(t)},\\
			&u(s,t)=S(r,t),\\
			&v(s,t)=E(r,t),\\
			&w(s,t)=I(r,t).
		\end{aligned}
	\end{equation}
	Then,
	\begin{equation*}
	\begin{aligned}
		&s_t=-s\frac{h'(t)}{h(t)},\\
		&E_t=v_t+v_s\left(-s\frac{h'(t)}{h(t)}\right), \quad E_r=v_s\frac{h_0}{h(t)},\quad \Delta E=\Delta v \left(\frac{h_0}{h(t)}\right)^2,\\
		&I_t=w_t+w_s\left(-s\frac{h'(t)}{h(t)}\right),\quad I_r=w_s\frac{h_0}{h(t)},\quad \Delta I = \Delta w \left(\frac{h_0}{h(t)}\right)^2,
	\end{aligned}
	\end{equation*}
	and $v$ and $w$ satisfy
	\begin{equation*}
		\left \{ \begin{aligned}
			&v_t -sv_s\frac{h'(t)}{h(t)}-d\Delta v (\frac{h_0}{h(t)})^2=(1-p)\alpha uw-(\beta_1+r_1+\mu_2)v, \quad &&(r,t)\in\Omega_{h_0},\\
			&w_t-sw_s\frac{h'(t)}{h(t)}-d\Delta w(\frac{h_0}{h(t)})^2=p\alpha uw+\beta_1 v-(r_2+\mu_3)w, \quad &&(r,t)\in\Omega_{h_0},\\
			&v_s(0,t)=v(h_0,t)=w_s(0,t)=w(h_0,t)=0, \quad &&t>0,\\
			&v(s,0)=E_0(s), w(s,0)=I_0(s), \quad &&0\leq s \leq h_0.
		\end{aligned} \right.
	\end{equation*}
	By uniform estimate \cref{uniform SEI} and \cref{uniform h'},
	\begin{equation*}
	\begin{aligned}
		&\lVert (1-p)\alpha uw-(\beta_1+r_1+\mu_2)v\rvert_{L^\infty}\leq C_2,\\
		&\lVert p\alpha uw-\beta_1 v (r_2+\mu_3)w\rVert_{L^\infty}\leq C_2,\\
		&\lVert \frac{h'(t)}{h(t)}s\rVert_{L^\infty}\leq C_3.
	\end{aligned}
	\end{equation*}
	By $L^p$ theory and the Sobolev embedding for $W^{2,p}$,
	\begin{equation*}
		\begin{aligned}
			\lVert v \rVert_{C^{1+\alpha}(\overline{\Omega_{h_0}})}\leq C',\\
			\lVert w \rVert_{C^{1+\alpha}(\overline{\Omega_{h_0}})}\leq C',
		\end{aligned}
	\end{equation*}
	where $C'$ is a constant depending only on $\alpha, h_0, C_1, C_2, \lVert E_0\rVert_{C^2},$ and $\lVert I_0\rVert_{C^2}$.
	Therefore, for any $0<\alpha<1$, there exists a constant $\tilde C$ which only depends on $\alpha, h_0, \lVert E_0+I_0 \rVert_{C^{1+\alpha}}([0,h_0])$ and $h_\infty$, such that
	\begin{equation*}
		\lVert E+I \rVert_{C^{1+\alpha}}(\overline{\Omega_{h_0}})+ \lVert h \rVert_{C^{1+\frac{\alpha}{2}}([0,\infty))}\leq \tilde C.
	\end{equation*}
	Since $\lVert h \rVert_{C^{1+\frac{\alpha}{2}}([0,\infty))}\leq \tilde C$ and $h(t)$ is bounded,
	\begin{equation*}
		\lim_{t \to \infty}h'(t)=0,
	\end{equation*}
	so
	\begin{equation*}
		\lim_{t_k \to \infty}I_r(h(t_k),t_k) = 0,
	\end{equation*}
	by the free boundary condition.
	Moreover, by the inequality of $E$ and $I$, 
	\begin{equation*}
		\lim_{k \to \infty}(E+I)_r(h(t_k),t_k)=\lim_{k \to \infty}((E+I)_k)_r(h(t_k),0)= (\tilde E+\tilde I)_r(h_\infty,0)
	\end{equation*}
	and it contradicts the fact
	\begin{equation*}
		(\tilde E +\tilde I)_r{h_\infty},0)\leq - \sigma_0.
	\end{equation*}
	Therefore,
	\begin{equation*}
	\lim_{t \to \infty}\lVert E(\cdot ,t)\rVert_{C([0,h(t)])}=\lim_{t \to \infty}\lVert I(\cdot ,t)\rVert_{C([0,h(t)])}=0.
	\end{equation*}
	\end{proof}

\begin{proposition}
	If $\mathcal{R}_0^{\min}\left(:=\frac{\alpha A}{\max\limits_{i=1,2,3}\{\mu_i\}(\max\limits_{i=1,2}\{r_i\}+\max\limits_{i=2,3}\{\mu_i\})}\right)>1$ and $h_0>h_0^*$, then $h_\infty = \infty$.
\end{proposition}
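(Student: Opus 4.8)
The plan is to argue by contradiction along the classical free‑boundary ``spreading when the initial habitat is large'' scheme: if the habitat stays bounded, then \cref{lemma 4.5} forces the infection to die out, while a principal‑eigenvalue/lower‑solution argument forces it to persist once $h_0$ is large. So assume $h_\infty<\infty$ (we retain the standing assumption $d_1=d_2=d_3=d$, so that \cref{lemma 4.5} applies). By \cref{lemma 4.5}, $\lVert E(\cdot,t)\rVert_{C([0,h(t)])}\to0$ and $\lVert I(\cdot,t)\rVert_{C([0,h(t)])}\to0$. Feeding this into the $S$‑equation of \cref{free boundary SEIS}, the reaction terms other than $A-\mu_1S$ are $o(1)$ uniformly in $r$, so a standard comparison of $S$ with spatially homogeneous super/subsolutions of $\dot Z=A-\mu_1Z$ gives $S(r,t)\to\frac{A}{\mu_1}$ uniformly on $[0,h(t)]$. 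Fix $\varepsilon>0$ small; there is then $T_\varepsilon>0$ with $S(r,t)\ge\frac{A}{\mu_1}-\varepsilon>0$ for all $t\ge T_\varepsilon$, $0\le r\le h(t)$, and, since $h'>0$ on $(0,\infty)$ by \cref{uniform h'}, $h(T_\varepsilon)>h_0$, so the strong maximum principle gives $E(\cdot,T_\varepsilon)>0$ and $I(\cdot,T_\varepsilon)>0$ on the whole closed ball $\{0\le r\le h_0\}$.

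On the fixed domain $\{0\le r<h_0\}$, which lies strictly inside $\{0\le r<h(t)\}$ for every $t$, consider the linear \emph{cooperative} system obtained by linearizing the $(E,I)$‑subsystem at the disease‑free state and replacing $S$ by the constant $\frac{A}{\mu_1}-\varepsilon$ (the couplings $(1-p)\alpha(\frac{A}{\mu_1}-\varepsilon)I$ and $\beta_1E$ are nonnegative). Let $\sigma_1(\rho)$ be its principal eigenvalue on $\{0\le r<\rho\}$ with $\partial_r=0$ at $r=0$ and Dirichlet data at $r=\rho$, and $(\phi_\rho,\psi_\rho)>0$ the principal eigenfunction. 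As usual $\rho\mapsto\sigma_1(\rho)$ is increasing and tends, as $\rho\to\infty$, to the Perron bound of the reaction matrix; under the hypothesis $\mathcal R_0^{\min}>1$ this limit is positive, so the threshold $h_0^*$ in the statement — the value of $\rho$ at which $\sigma_1$ vanishes in the limit $\varepsilon\to0$ — is finite, and, decreasing $\varepsilon$ if necessary, $\sigma:=\sigma_1(h_0)>0$ because $h_0>h_0^*$.

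Now set $(\underline E,\underline I)(r,t)=\eta\,e^{\sigma(t-T_\varepsilon)}(\phi_{h_0}(r),\psi_{h_0}(r))$ with $\eta>0$ so small that $\eta\phi_{h_0}\le E(\cdot,T_\varepsilon)$ and $\eta\psi_{h_0}\le I(\cdot,T_\varepsilon)$ on $\{0\le r\le h_0\}$ (possible by the positivity just established). On $\{0\le r<h_0\}\times(T_\varepsilon,\infty)$ the true pair $(E,I)$ satisfies the full nonlinear subsystem with $S\ge\frac{A}{\mu_1}-\varepsilon$ and $I\ge0$, hence is a supersolution of the frozen linear cooperative problem, while $E,I\ge0$ at $r=h_0$ dominates the zero Dirichlet data of $(\underline E,\underline I)$; the comparison principle for cooperative systems gives $E\ge\underline E$, $I\ge\underline I$ on $\{0\le r<h_0\}\times[T_\varepsilon,\infty)$. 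Since $\sigma>0$, $\underline E,\underline I\to\infty$, contradicting $E,I\to0$. Hence $h_\infty=\infty$.

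The point needing care is the coupling in the $(E,I)$‑subsystem: the incidence term $\alpha IS$ feeds only $I$, while $E$ is replenished only through the $\beta_1E$ term in the $I$‑equation, so no single scalar equation controls the pair and the lower solution has to be built from the linearized cooperative system; the two facts to be verified are its comparison principle and the monotonicity and limit of $\rho\mapsto\sigma_1(\rho)$, the latter being exactly where $\mathcal R_0^{\min}>1$ enters to ensure $h_0^*<\infty$. A scalar alternative — closer in spirit to \cref{comparison lem of sum} — is to find positive weights $a,b$ with $(aE+bI)_t-d\Delta(aE+bI)\ge c(aE+bI)$ for some $c>0$ whenever $S\ge\frac{A}{\mu_1}-\varepsilon$, reducing the contradiction to one Dirichlet‑eigenvalue estimate on $\{0\le r<h_0\}$, at the cost of checking that such weights exist under the standing hypotheses on the parameters.
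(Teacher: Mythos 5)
Your overall strategy coincides with the paper's: assume $h_\infty<\infty$, invoke \cref{lemma 4.5} to get $E,I\to0$, deduce $S\to A/\mu_1$ uniformly, and then derive a contradiction by exhibiting an exponentially growing subsolution built from a principal eigenfunction on the fixed ball of radius $h_0$. The one substantive difference is at the eigenvalue step. The paper scalarizes: it drops the nonnegative term $\beta_1E$ from the $I$-equation and compares $I$ alone against $\underline I_t-d\Delta\underline I=(p\alpha\frac{A}{\mu_1}-r_2-\mu_3)\underline I$, so its contradiction requires $p\alpha\frac{A}{\mu_1}-(r_2+\mu_3)>d\lambda_1(h_0)$, where $\lambda_1$ is the Dirichlet principal eigenvalue of $-\Delta$ on $B_{h_0}$. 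You instead keep the full linearized $(E,I)$-pair as a cooperative system and use its principal eigenvalue $\sigma_1(h_0)$ (or, equivalently, your suggested weighted combination $aE+bI$ with a left Perron eigenvector). Your version is genuinely stronger where it matters: the paper's scalar reduction discards the $E\to I$ feedback through $\beta_1E$, so it yields nothing when $p$ is small (then $p\alpha\frac{A}{\mu_1}-r_2-\mu_3<0$ and no choice of $h_0$ helps), whereas the cooperative principal eigenvalue retains the loop $I\to E\to I$ and is exactly the right quantity for an SEIS linearization. The cost is that you must justify the comparison principle for the cooperative system and the monotonicity of $\rho\mapsto\sigma_1(\rho)$, both of which you correctly flag as the points needing verification.

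One caveat applies to both your argument and the paper's, and you should not let it pass silently: the claim that $\mathcal R_0^{\min}>1$ forces the limiting growth rate to be positive (so that $h_0^*<\infty$) is asserted rather than proved. For the $2\times2$ cooperative matrix $\bigl(\begin{smallmatrix}-(\beta_1+r_1+\mu_2) & (1-p)\alpha A/\mu_1\\ \beta_1 & p\alpha A/\mu_1-(r_2+\mu_3)\end{smallmatrix}\bigr)$ the Perron root is positive iff the determinant is negative or the trace is positive, and $\mathcal R_0^{\min}>1$ as defined (which does not involve $p$ or $\beta_1$) does not imply this in general: e.g.\ with $p$ and $\beta_1$ both small and $r_1+\mu_2$, $r_2+\mu_3$ of order one, the Perron root is negative even though $\alpha A/\mu_1$ exceeds both $r_1+\mu_2$ and $r_2+\mu_3$. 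Since the proposition's hypothesis also includes the undefined threshold $h_0>h_0^*$, the cleanest fix is to define $h_0^*$ explicitly as the radius at which your $\sigma_1$ crosses zero and to add the (necessary) condition that the Perron root of the reaction matrix at $S=A/\mu_1$ is positive; with that, your proof is complete and in fact repairs the small-$p$ defect of the paper's scalar argument.
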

\begin{proof}
	Assume that $h_\infty < \infty$. By \cref{lemma 4.5},
	\begin{equation*}
		\lim_{t \to \infty}\lVert E(\dot,t)\rVert_{C([0,h(t)])}=\lim_{t \to \infty}\lVert I(\dot,t)\rVert_{C([0,h(t)])}=0.
	\end{equation*}
	Thus, since $\displaystyle \lim_{t\to \infty}S(r,t)=\frac{A}{\mu_1}$ uniformly for $r \in [0,h_0]$. Therefore, there exists $T^*>0$ for $\epsilon>0$ such that $\displaystyle S(r,t) \geq \frac{A}{\mu_1}-\epsilon$ for $r\in [0,h_(t)), t>T^*$. Then $E(r,t)$ and $I(r,t)$ satisfy
	\begin{equation*}
		\left \{ \begin{aligned}
			&E_t-d\Delta E \geq (1-p)\alpha I \frac{A}{\mu_1}-\beta_1 E-r_1 E-\mu_2 E, \quad && 0<r<h_0, t>T^*,\\
			&I_t-d\Delta I \geq p \alpha I \frac{A}{\mu_1}+\beta_1 E-r_2 I-\mu_3, &&0<r<h_0, t>T^*,\\
			&E_r(0,t)=I_r(0,t)=0, E(h_0,t), I(h_0,t)\geq0 &&t>T^*,\\
			&E(r,T^*), I(r,T^*)>0, &&0\leq r <h_0.
		\end{aligned} \right.
	\end{equation*}
	Since $\displaystyle \lim_{t \to \infty} E=0$,
	\begin{equation*}
		\left \{ \begin{aligned}
			&I_t-d\Delta I \geq p \alpha I \frac{A}{\mu_1}-(r_2+\mu_3)I=(p\alpha \frac{A}{\mu_1}-r_2-\mu_3)I, &&0<r<h_0, t>T^*,\\
			&I_r(0,t)=0, \quad I(h_0,t)\geq 0, &&t>T^*,\\
			&I(r,T^*)>0, &&0\leq r < h_0.
		\end{aligned} \right.
	\end{equation*}
	Thus, we can construct the subsolution $\underline I$ of
	\begin{equation*}
		\left \{ \begin{aligned}
			&\underline I_t-d\Delta \underline I = (p\alpha \frac{A}{\mu_1}-r_2-\mu_3)\underline I, &&0<r<h_0, t>T^*,\\
			&\underline I_r(0,t)=0, \quad \underline I(h_0,t)\geq 0, &&t>T^*,\\
			&\underline I(r,T^*)=I(r,T^*), &&0= r < h_0.
		\end{aligned} \right.
	\end{equation*}
	By assumption, since $h_0>h_0^*$, there is sufficiently small $\epsilon>0$ such that
	\begin{equation*}
		p\alpha(\frac{A}{\mu_1}-r_2-\mu_3)>d \lambda_1(h_0),
	\end{equation*}
	where $\lambda_1(R)$ is the principal eigenvalue of the operator $-\Delta$ in open ball $B_R$ with radius $R$. Then $\underline{I}$ is unbounded in $(0,h_0)\times [T^*,\infty)$ and it makes contradiction. Therefore $\displaystyle \lim_{t \to \infty} \lVert I \rVert \neq 0$ and $h_\infty = \infty$.
\end{proof}

In conclusion, this result clearly defines the reproductive number well.

\begin{theorem}
	\begin{equation*}
		\mathcal{R}_0=\left\{ \mathcal{R}_0^{\min}\left(:=\frac{\alpha A}{\max\limits_{i=1,2,3}\{\mu_i\}(\max\limits_{i=1,2}\{r_i\}+\max\limits_{i=2,3}\{\mu_i\})}\right), \mathcal{R}_0^{\max}\left(:=\frac{\alpha A}{\min\limits_{i=1,2,3}\{\mu_i \}\min \{ r_1+\mu_2,\ r_2+\mu_3\}}\right)\right\}
		\end{equation*}
			is the reproductive number. In other words, if $\mathcal R_0^{\max}<1$, then the DFE is locally stable, and if $\mathcal R_0^{\min}>1$, then the DFE is unstable. Moreover, for $\mu_1<\mu_2<\mu_3$,	and $r_1+\mu_2<r_2+\mu_3$,
	\begin{equation*}
	\mathcal R_0=\left\{\mathcal{R}_0^{\min}\left(:=\frac{\alpha A}{\mu_3(r_1+\mu_3)}\right), \mathcal{R}_0^{\max}\left(:=\frac{\alpha A}{\mu_1(r_2+\mu_3)}\right)\right\}.
	\end{equation*}
	In other words, if $\displaystyle \mathcal{R}_0^{\min}=\frac{\alpha A}{\mu_3(r_1+\mu_3)}<1,$
			then the DFE is locally stable, and if
				$\displaystyle \mathcal{R}_0^{\max}=\frac{\alpha A}{\mu_1(r_2+\mu_3)}<1$,
			then the DFE is unstable.
\end{theorem}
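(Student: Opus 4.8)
The plan is to obtain the statement by assembling the three propositions of this section against the Definition of $\mathcal{R}_0$; no fresh analytic estimate is needed, so the argument is essentially bookkeeping that records which regime each proposition covers. The reason ``$\mathcal{R}_0$'' comes out as the pair $\{\mathcal{R}_0^{\min},\mathcal{R}_0^{\max}\}$ rather than a single threshold is that every comparison argument above replaces the triple of removal rates by a single $\max$ or $\min$: the crude lower bound yields a window $\mathcal{R}_0^{\max}<1$ of guaranteed vanishing, the crude upper bound a window $\mathcal{R}_0^{\min}>1$ of guaranteed persistence, with a gap in between in general.

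For the stability clause I would assume $\mathcal{R}_0^{\max}<1$ and note that this is (a fortiori) the hypothesis of \cref{theorem 4.2}, since $\min_i\mu_i\le\mu_1$ makes the denominator there no smaller. That proposition then gives $\lVert E(\cdot,t)\rVert_C,\lVert I(\cdot,t)\rVert_C\to 0$, $h_\infty<\infty$, and $S(r,t)\to A/\mu_1$ uniformly on bounded sets, i.e. every admissible solution converges to the disease-free state $(A/\mu_1,0,0)$. Global convergence in particular forces local asymptotic stability of the DFE and precludes invasion, which is the first clause of the Definition.

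For the instability clause I would assume $\mathcal{R}_0^{\min}>1$ and invoke the last proposition of the ``Persisting Case'' subsection: for front size $h_0>h_0^*$ it yields $h_\infty=\infty$ and $\limsup_{t\to\infty}\lVert I(\cdot,t)\rVert_C>0$, so the infected compartment survives and the front advances to infinity — the DFE is unstable and invasion occurs for suitably large initial fronts. The one genuinely delicate ingredient, which I would simply cite, is the principal-eigenvalue comparison used there: one builds a sub-solution $\underline I$ on $B_{h_0}$ for a linear equation with reaction coefficient $p\alpha(A/\mu_1)-r_2-\mu_3$, which becomes unbounded once $p\alpha(A/\mu_1)-r_2-\mu_3>d\lambda_1(h_0)$, and this is arranged by pushing $h_0$ beyond $h_0^*$ because $\lambda_1(h_0)\downarrow 0$ as $h_0\uparrow\infty$. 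Combining the two clauses shows $\{\mathcal{R}_0^{\min},\mathcal{R}_0^{\max}\}$ meets the Definition and hence merits the name.

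The explicit formulas are then pure substitution: with $\mu_1<\mu_2<\mu_3$ one has $\min_i\mu_i=\mu_1$, $\max_i\mu_i=\mu_3$, $\max_{i=2,3}\mu_i=\mu_3$, and with $r_1+\mu_2<r_2+\mu_3$ one reads off $\min\{r_1+\mu_2,r_2+\mu_3\}$ and $\max\{r_1+\mu_2,r_2+\mu_3\}$; inserting these into the general definitions of $\mathcal{R}_0^{\min}$ and $\mathcal{R}_0^{\max}$ produces the displayed expressions, the only check being that the labels stay consistent (that $\mathcal{R}_0^{\min}\le\mathcal{R}_0^{\max}$ under these rates). I expect no serious obstacle here: the difficulty is conceptual rather than technical — one must accept that ``the'' reproductive number is in this model legitimately an interval, reflecting the sharpness lost when the three removal rates are dominated by a single constant, and that the Definition is verified at each endpoint rather than at one distinguished value.
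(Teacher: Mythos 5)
Your proposal is correct and coincides with the paper's (implicit) argument: the paper offers no separate proof of this theorem, presenting it as the direct assembly of \cref{theorem 4.2} (vanishing when $\mathcal R_0^{\max}<1$) and the final proposition of the persisting subsection (spreading when $\mathcal R_0^{\min}>1$ and $h_0>h_0^*$), which is exactly your bookkeeping, including the caveat about large initial fronts. The only point worth flagging — inherited from the paper's own statement rather than introduced by you — is that the hypotheses $\mu_1<\mu_2<\mu_3$ and $r_1+\mu_2<r_2+\mu_3$ do not determine $\max_{i=1,2}\{r_i\}$, so the ``pure substitution'' producing $\mu_3(r_1+\mu_3)$ in $\mathcal R_0^{\min}$ tacitly requires the extra assumption $r_1\ge r_2$.
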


\subsection{The Eigen Value Problems}
The nonlinear eigenvalue problem can describe the asymptotic profiles of a corresponding parabolic flow\cite{Lee_2008}. Thus, we consider the eigenvalue problem to identify the speed of convergence of the infected and exposed populations.

We consider the solution $E(r,t)$ and $I(r,t)$  of the equations \cref{free boundary SEIS} and 
first straighten the free boundary to make the boundary fixed. 

\begin{lemma} \label{straightening lem2}
	Suppose that $\displaystyle \lim _{t \to \infty}h(t)=h_\infty$ for some $h_\infty >0$. The free boundary $h(t)$ of the system of equations \cref{free boundary SEIS} with \cref{initial cond} is diffeomorphic to $h_\infty$ for $d_1=d_2=d_3=d$.
\end{lemma}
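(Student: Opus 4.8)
The plan is to reproduce the construction of \cref{straightening lem}, but centered at $h_\infty$ instead of $h_0$, which becomes legitimate once we are sufficiently far out in time. First I would invoke the hypothesis $\lim_{t\to\infty}h(t)=h_\infty$ together with the monotonicity $h'(t)>0$ from \cref{uniform h'} (so that $h(t)\uparrow h_\infty$) to fix a time $T_0>0$ such that $|h(t)-h_\infty|\le h_\infty/8$ for all $t\ge T_0$; the global existence from \cref{longtime} guarantees $h\in C^{1+\gamma/2}([0,\infty))$, so $h'$ is Hölder continuous in time. Then I would build a cutoff $\xi\in C^3([0,\infty))$ with $\xi(s)=1$ when $|s-h_\infty|<h_\infty/8$, $\xi(s)=0$ when $|s-h_\infty|>h_\infty/2$ (in particular $\xi\equiv0$ near $s=0$, since $h_\infty>0$), and $|\xi'(s)|<5/h_\infty$ for all $s$, exactly as before.

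Next I would introduce the radial change of variables $(s,t)\mapsto(r,t)$,
\[
r=s+\xi(s)\bigl(h(t)-h_\infty\bigr),\qquad 0\le s<\infty,\ t\ge T_0,
\]
induced by $x=y+\xi(|y|)\bigl(h(t)-h_\infty\bigr)\tfrac{y}{|y|}$ on $\mathbb{R}^n$. Since $\partial r/\partial s=1+\xi'(s)(h(t)-h_\infty)$ and $|\xi'(s)(h(t)-h_\infty)|<\tfrac58<1$ for every $t\ge T_0$, this map is a diffeomorphism of $[0,\infty)$ onto itself for each such $t$: it is the identity near $r=0$ (so the Neumann condition at $r=0$ is preserved), and it carries $s=h_\infty$ to $r=h(t)$ because $\xi(h_\infty)=1$, so the free boundary $r=h(t)$ is straightened to the fixed boundary $s=h_\infty$. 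As in \cref{straightening lem}, $s$ inherits radial symmetry and one writes $\Delta_s w=w_{ss}+\tfrac{n-1}{s}w_s$; applying the chain rule to \eqref{free boundary SEIS} exactly as in the proof of \cref{shorttime}, with $h_0$ replaced by $h_\infty$, converts the free boundary problem into a problem on the fixed domain $\overline{\Lambda_\infty}=[0,\infty)\times[T_0,\infty)$ whose coefficients still encode $h$ and $h'$; I would record this transformed system for the eigenvalue analysis that follows.

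The one genuine difference from \cref{straightening lem}, and the step I expect to be the main obstacle, is that the smallness $|h(t)-h_\infty|\le h_\infty/8$ that makes the transformation a diffeomorphism is no longer furnished by the initial data but must be extracted from the convergence $h(t)\to h_\infty$, so it holds only on $[T_0,\infty)$ rather than near $t=0$; this is exactly the regime relevant to the asymptotic profile of $E$ and $I$, so it causes no difficulty. Finally, the assumption $d_1=d_2=d_3=d$ is used to ensure that after the change of variables the equations for the transformed $u,v,w$ retain a common diffusion coefficient $dX(h(t),s)$ in the notation of the proof of \cref{shorttime}, which is what permits the subsequent comparison with the linear elliptic eigenvalue problem.
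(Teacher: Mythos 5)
Your proposal matches the paper's proof essentially verbatim: the same cutoff $\xi$ centered at $h_\infty$, the same radial transformation $r=s+\xi(s)(h(t)-h_\infty)$, and the same observation that the map is a diffeomorphism carrying $r=h(t)$ to $s=h_\infty$ whenever $\lvert h(t)-h_\infty\rvert\le h_\infty/8$. If anything, you are more careful than the paper, which leaves the smallness condition as a standing hypothesis rather than extracting a time $T_0$ from the convergence $h(t)\to h_\infty$ and the monotonicity of $h$ as you do.
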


\begin{proof}
	Construct $\xi(s)$ be a function in $C^3([0, \infty))$ satisfying
		\begin{equation*}
		\left \{ \begin{aligned}
			&\xi(s)=1, &&\text{\ if \ }\lvert s-h_\infty \rvert < \frac{h_\infty}{8},
			\\
			&\xi(s)=0, &&\text{\ if \ }\lvert s-h_\infty \rvert > \frac{h_\infty}{2},
			\\
			&\lvert \xi '(s) \rvert < \frac{5}{h_\infty}, &&\text{\ for all\ } s.
		\end{aligned} \right.
		\end{equation*}
		Consider the transformation
		\begin{equation*}
			(y,t) \mapsto (x,t), \text{ where } x=y+\xi(\lvert y \rvert )\left(h(t)-h_\infty \frac{y}{\lvert y \rvert}\right), \ y \in \mathbb{R}^n,
		\end{equation*}
		which leads to the transformation
		\begin{equation*}
			(s,t) \mapsto (r,t)\text{ with } r=s+\xi(s)(h(t)-h_\infty),\ 0 \leq s < \infty.
		\end{equation*}
		As long as
		\begin{equation*}
			\lvert h(t)-h_\infty \rvert \leq \frac{h_\infty}{8},
		\end{equation*}
		the above transformation $x \rightarrow y$ is a diffeomorphism from $\mathbb{R}^n$ onto $\mathbb{R}^n$ and the transformation $s \rightarrow r$ is also a diffeomorphism from $[0,+\infty)$ onto $[0,+\infty)$. So $s$ is radially symmetric in the same way that $r$ is and we can define $\Delta_s w = w_{ss}+ \frac{n-1}{s}w_s$. Moreover, it changes the free boundary $r=h(t)$ to fixed boundary $s=h_\infty$.
\end{proof}

\begin{lemma} \label{equation changing}
Suppose that $\displaystyle \lim _{t \to \infty}h(t)=h_\infty$ for some $h_\infty >0$. If $d_1=d_2=d_3=d$ for some $d>0$, the solution of the equations \cref{free boundary SEIS} of the Exposed and the Infected individuals are equal to the solution of the equations,
\begin{equation} \label{modified free boundary SEIS4}
	\left \{ \begin{aligned}
		u_t-Xd\Delta_s u-(Yd+Zh'(t)+Wd)u_s=(1-p)\alpha vw-(\beta_1+r_1+\mu_2)u,\\
		v_t-Xd\Delta_s u-(Yd+Zh'(t)+Wd)u_s=p\alpha vw+\beta_1 u-(r_2+\mu_3)v,
	\end{aligned} \right.
\end{equation}
for $0<s<h_\infty$ and $t>0$, and
\begin{equation} \label{modified free boundary SEIS4 cond}
	\left \{ \begin{aligned}
		&h'(t)=-\beta v_s(h_0,t)-\mu w_s(h_0,t), \quad &&s=h_\infty,\\
		&u(s,t)=v(s,t)=0, &&s\geq h_\infty,
	\end{aligned} \right.
\end{equation}
where
\begin{equation*}
	\begin{aligned}
		&\sqrt{X(h(t),s)}\coloneqq \frac{\partial s}{\partial r}=\frac{1}{1+\xi'(s)(h(t)-h_0)},
			\\
		&Y(h(t),s)\coloneqq \frac{\partial^2s}{\partial r^2}=-\frac{\xi''(s)(h(t)-h_0)}{[1+\xi'(s)(h(t)-h_0)]^3},
			\\
		&Z(h(t),s)\coloneqq -\frac{1}{h'(t)}\frac{\partial s}{\partial t}=\frac{\xi(s)}{1+\xi'(s)(h(t)-h_0)},
			\\
		&W(h(t),s)\coloneqq(n-1)X\frac{(s\xi'-\xi)(h(t)-h_0)}{s(s+\xi(s)(h(t)-h_0))}.\,
	\end{aligned}
\end{equation*}
\end{lemma}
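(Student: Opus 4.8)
The plan is to reduce \cref{equation changing} to the change of variables already carried out in the proof of \cref{shorttime}, with the fixed boundary $h_0$ there replaced by $h_\infty$ and the three diffusion coefficients collapsed to a single $d$. First I would invoke \cref{straightening lem2}: since $\lim_{t\to\infty}h(t)=h_\infty$, there is a time after which $|h(t)-h_\infty|\le h_\infty/8$, and on that time interval the map $r=s+\xi(s)(h(t)-h_\infty)$ is a radially symmetric diffeomorphism of $[0,\infty)$ onto itself that sends the free boundary $r=h(t)$ to the fixed boundary $s=h_\infty$, so that $\Delta_s w=w_{ss}+\tfrac{n-1}{s}w_s$ is meaningful.

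Second, I would introduce the straightened unknowns obtained by composing $S$, $E$, $I$ with this map, exactly as $u$, $v$, $w$ were built from $S$, $E$, $I$ in \cref{shorttime}, and run the chain rule. Differentiating the defining identities once and twice in $s$ and once in $t$ reproduces the relations recorded there: a first $r$-derivative acquires a factor $\sqrt{X}$, a second $r$-derivative acquires $X$ together with a lower-order $Y$-term, and the $t$-derivative acquires $-Z\,h'(t)$ times the $s$-derivative. The only point requiring care is that the radial term $\tfrac{n-1}{r}$ in $\Delta_r$ must be rewritten against $\tfrac{n-1}{s}$ in $\Delta_s$; this rewriting produces precisely the lower-order coefficient $W(h(t),s)$ appearing in the statement, so that each operator $\partial_t-d\Delta_r$ becomes $\partial_t-Xd\,\Delta_s-(Yd+Zh'+Wd)\,\partial_s$ on the straightened unknown. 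Because $d_1=d_2=d_3=d$, the same functions $X,Y,Z,W$ (each multiplied by the same $d$) occur in both the $E$- and the $I$-equation, which is the role of that hypothesis; substituting the reaction terms from the second and third lines of \cref{free boundary SEIS} then yields \cref{modified free boundary SEIS4}.

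Third, I would translate the interface data. At $s=h_\infty$ one has $\xi\equiv1$, hence $\sqrt{X}=1$ and $r=h(t)$; evaluating $h'(t)=-\beta E_r(h(t),t)-\mu I_r(h(t),t)$ there reproduces the free-boundary relation in \cref{modified free boundary SEIS4 cond}, while the conditions $E=I=0$ outside $\Omega_h$ become the vanishing of the straightened unknowns for $s\ge h_\infty$. Since the transformation is a time-dependent diffeomorphism with nonvanishing Jacobian, it is invertible and carries solutions to solutions in both directions, so the $(E,I)$-part of a solution of \cref{free boundary SEIS} with \cref{initial cond} and a solution of \cref{modified free boundary SEIS4}--\cref{modified free boundary SEIS4 cond} correspond bijectively, which is the asserted equality. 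I expect the only genuine obstacle to be the bookkeeping in the radial Laplacian --- verifying that the mismatch between $\tfrac{n-1}{r}$ and $\tfrac{n-1}{s}$ is exactly absorbed into $W$ --- since everything else is the identical chain-rule computation already performed for the $S$-equation in \cref{shorttime}.
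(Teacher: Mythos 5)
Your proposal is correct and follows essentially the same route as the paper: invoke the straightening diffeomorphism of \cref{straightening lem2}, define the straightened unknowns by composition, apply the chain rule to identify the coefficients $X,Y,Z,W$ (with the $\tfrac{n-1}{r}$ versus $\tfrac{n-1}{s}$ mismatch absorbed into $W$), and transport the reaction and interface data. The only additions you make beyond the paper's computation --- restricting to times where $\lvert h(t)-h_\infty\rvert\le h_\infty/8$ so the map is a diffeomorphism, and noting the two-way correspondence of solutions --- are sensible clarifications rather than a different argument.
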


\begin{proof}
By \cref{straightening lem2}, free boundary $h(t)$ is changed to $h_\infty$. Now, set 
\begin{equation*}
	\begin{aligned}
	E(r,t) = E(s+\xi(s)(h(t)-h_\infty),t)=:u(s,t),\\
	I(r,t) = I(s+\xi(s)(h(t)-h_\infty),t)=:v(s,t).
	\end{aligned}
\end{equation*}
If we differentiate both sides with respect to $s$,
\begin{equation*}
	\begin{aligned}
		(1+\xi'(s)(h(t)-h_\infty))E_r=u_s,\\
		(1+\xi'(s)(h(t)-h_\infty))I_r=v_s,
	\end{aligned}
\end{equation*}
so we can obtain
\begin{equation*}
	\begin{aligned}
		E_r = \frac{u_s}{1+\xi'(s)(h(t)-h_\infty},\\
		I_r = \frac{v_s}{1+\xi'(s)(h(t)-h_\infty}.
	\end{aligned}
\end{equation*}
And if we differentiate both sides twice with respect to $s$,
\begin{equation*}
	\begin{aligned}
		&\xi''(s)(h(t)-h_\infty)E_r+(1+\xi'(s)(h(t)-h_\infty)\frac{\partial}{\partial s}E_r=u_{ss},\\
		&\xi''(s)(h(t)-h_\infty)E_r+(1+\xi'(s)(h(t)-h_\infty)\frac{\partial r}{\partial s}\frac{\partial}{\partial r}E_r=u_{ss},\\
		&\xi'(s)(h(t)-h_\infty)E_r+(1+\xi'(s)(h(t)-h_\infty))^2E_{rr}=u_{ss}.
	\end{aligned}
\end{equation*}
Similarly,
\begin{equation*}
	\xi''(s)((h(t)-h_\infty)I_r+(+\xi'(s)(h(t)-h_\infty))^2I_{rr}=v_{ss},
\end{equation*}
so we can obtain
\begin{equation*}
	\begin{aligned}
		E_{rr}=\frac{u_{ss}-\xi''(s)(h(t)-h_\infty)E_r}{(1+\xi'(s)(h(t)-h_\infty))^2}=\frac{(1+\xi'(s)(h(t)-h_\infty))u_{ss}-\xi''(s)(h(t)-h_\infty)u_s}{(1+\xi'(s)(h(t)-h_\infty))^3},
	\end{aligned}
\end{equation*}
and 
\begin{equation*}
	I_{rr}= \frac{v_{ss}-\xi''(s)(h(t)-h_\infty)I_r}{(1+\xi'(s)(h(t)-h_\infty))^2}=\frac{((1+\xi'(s))(h(t)-h_\infty))v_{ss}-\xi''(s)(h(t)-h_\infty)u_s}{(1+\xi'(s)(h(t)-h_\infty))^3}.
\end{equation*}
In other case, if we differentiate both sides with respect to $t$,
\begin{equation*}
	\begin{aligned}
		&\xi(s)h'(t)E_r+E_t=u_t,\\
		&\frac{\xi(s)h'(t)}{1+\xi'(s)(h(t)-h_\infty)}u_s+E_t=u_t,
	\end{aligned}
\end{equation*}
and
\begin{equation*}
	\frac{\xi(s)h'(t)}{1+\xi'(s)(h(t)-h_\infty)}v_s+I_t=v_t,
\end{equation*}
so we can obtain
\begin{equation*}
	\begin{aligned}
		E_t=u_t-\frac{\xi(s)h'(t)}{1+\xi'(s)(h(t)-h_\infty)}u_s,\\
		I_t=v_t-\frac{\xi(s)h'(t)}{1+\xi'(s)(h(t)-h_\infty)}v_s.
	\end{aligned}
\end{equation*}
By using these calculations, we can change of the eqution\cref{free boundary SEIS} of the Exposed $E$,
\begin{equation}\label{modified free boundary SEIS3}
	\begin{aligned}
		E_t-d\Delta_rE&=u_t-\frac{\xi(s)h'(t)}{1+\xi'(s)(h(t)-h_\infty)}u_s\\
		&\quad -d\left(\frac{(1+\xi'(s)(h(t)-h_\infty))u_{ss}-\xi''(s)(h(t)-h_\infty)u_s}{(1+\xi'(s)(h(t)-h_\infty))^3}\right.\\
		&\quad \left.+\frac{n-1}{r}\frac{u_s}{1+\xi'(s)(h(t)-h_\infty)}\right)\\
		&=u_t-\frac{\xi(s)h'(t)}{1+\xi'(s)(h(t)-h_\infty)}u_s-\frac{du_{ss}}{(1+\xi'(s)(h(t)-h_\infty))^2}\\
		&\quad +\frac{d\xi''(s)(h(t)-h_\infty)u_s}{(x+\xi'(s)(h(t)-h_\infty))^3}-d\frac{n-1}{r}\frac{u_s}{1+\xi'(s)(h(t)-h_\infty)}\\
		&=u_t-\frac{\xi(s)h'(t)}{1+\xi'(s)(h(t)-h_\infty)}u_s-\frac{d\Delta_s u}{(1+\xi'(s)(h(t)-h_\infty))^2}\\
		&\quad +\frac{d\xi''(s)(h(t)-h_\infty)u_s}{(1+\xi'(s)(h(t)-h_\infty))^3}\\
		&\quad +\frac{d(n-1)u_s(-s\xi'(s)+\xi(s))(h(t)-h_\infty)}{s(s+\xi(s)(h(t)-h_\infty)(1+\xi'(s)(h(t)-h_\infty))^2}.
	\end{aligned}
\end{equation}
If we define
\begin{equation*}
	\begin{aligned}
		&\sqrt{X(h(t),s)}\coloneqq \frac{\partial s}{\partial r}=\frac{1}{1+\xi'(s)(h(t)-h_0)},
			\\
		&Y(h(t),s)\coloneqq \frac{\partial^2s}{\partial r^2}=-\frac{\xi''(s)(h(t)-h_0)}{[1+\xi'(s)(h(t)-h_0)]^3},
			\\
		&Z(h(t),s)\coloneqq -\frac{1}{h'(t)}\frac{\partial s}{\partial t}=\frac{\xi(s)}{1+\xi'(s)(h(t)-h_0)},
			\\
		&W(h(t),s)\coloneqq(n-1)X\frac{(s\xi'-\xi)(h(t)-h_0)}{s(s+\xi(s)(h(t)-h_0))}.\,
	\end{aligned}
\end{equation*}
then we can rewrite the equation\cref{modified free boundary SEIS3}
\begin{equation*}
	u_t-Xd\Delta_s u -(Yd+Zh'(t)+Wd)u_s=(1-p)\alpha vw-(\beta_1+r_1+\mu_2)u.
\end{equation*}
Similarly, we had a direct calculation for $I$ and it is changed to
\begin{equation*}
	\begin{aligned}
		I_t-d_3\Delta_r I=&v_t-\frac{\xi(s)h'(t)}{1+\xi'(s)(h(t)-h_\infty)}v_s\\
		&-d\left( \frac{(1+\xi'(s)(h(t)-h_\infty))v_{ss}-\xi''(s)(h(t)-h_\infty)v_s}{(1+\xi'(s)(h(t)-h_\infty))^3} \right. \\
		&\left. \frac{n-1}{r}\frac{v_s}{1+\xi'(s)(h(t)-h_\infty)} \right),
	\end{aligned}
\end{equation*}
so
\begin{equation*}
	I_t-d\Delta_r I = v_t-Xd\Delta_s v-(Yd+Zh'(t)+Wd)v_s=p\alpha vw+\beta_1u-(r_2+\mu_3)v.
\end{equation*}
Therefore, the equations become
\begin{equation*}
	\left \{ \begin{aligned}
		&u_t-Xd\Delta_s u-(Yd+Zh'(t)+Wd)u_s=(1-p)\alpha vw-(\beta_1+r_1+\mu_2)u,\\
		&v_t-Xd\Delta_s v-(Yd+Zh'(t)+Wd)v_s=p\alpha vw+\beta_1 u-(r_2+\mu_3)v,
	\end{aligned} \right.
\end{equation*}
for $(s,t)\in \Omega_{h_\infty}$, and
\begin{equation*}
	\left \{ \begin{aligned}
		&h'(t)=-\beta v_s(h_0,t)-\mu w_s(h_0,t), \quad &&s=h_\infty,\\
		&u(s,t)=v(s,t)=0, &&s\geq h_\infty.
	\end{aligned} \right.
\end{equation*}
\end{proof}
By using a diffeomorphism, we changed the equation with a free boundary into the equation with a fixed boundary. We now consider the case in which DFE is stable. If $\mathcal{R}_0<1$, $S(r,t)\rightarrow\frac{A}{\mu_1}$ as $t\to \infty$ by the \cref{theorem 4.2} and $X(t,s)\to 1, Y(t,s)\to 0, Z(t,s)\to \xi (s), W(t,s) \to 0$ and $h'(t)\to 0$as $t \to \infty$, so the equations \cref{modified free boundary SEIS4} becomes
\begin{equation} \label{DFE ODE}
	\left \{ \begin{aligned}
		&d\Delta \phi -(\beta_1+r_1+\mu_2)\phi+(1-p)\alpha \frac{A}{\mu_1}\psi=0, &&\text{in } \Omega_{h_\infty}\\
		&d\Delta\psi +(p\alpha \frac{A}{\mu_1}-r_2-\mu_3)\psi+\beta_1\phi=0,  &&\text{in } \Omega_{h_\infty}\\
		&\phi=0, &&\text{on } \partial \Omega_{h_\infty},\\
		&\psi=0, &&\text{on } \partial \Omega_{h_\infty},
	\end{aligned} \right.
\end{equation}
on DFE. Thus, let
\begin{equation} \label{operator L}
	\mathcal{L}'=\begin{pmatrix}
		d\Delta \phi +a_{11} & a_{12} \\
		a_{22} & d\Delta\psi+ a_{21}
	\end{pmatrix}
\end{equation}
where
\begin{equation*}
\begin{pmatrix}
a_{11} & a_{12} \\
a_{21} & a_{22}
\end{pmatrix}
=
\begin{pmatrix}
-(\beta_1 + r_1 + \mu_2) & (1 - p)\alpha \frac{A}{\mu_1} \\
p \alpha \frac{A}{\mu_1} - (r_2 + \mu_3) & \beta_1
\end{pmatrix}.
\end{equation*}
Then we can say that
\begin{equation*}
	\begin{pmatrix}
		\phi \\
		\psi
	\end{pmatrix}
	\text{satisfies }
	\mathcal{L}'
	\begin{pmatrix}
		\phi \\
		\psi
	\end{pmatrix}
	=0,
\end{equation*}
on DFE. To analyze the solution of the equation\cref{DFE ODE}, we adopted the method introduced by \cite{Lee_2008}. It is well known that the Laplace operator has a countable discrete set of eigenvalues $\Sigma = \{\lambda_i \vert \lambda_1< \lambda_2 < \cdots < \lambda_n < \cdots \}$, whose eigen-functions $\{\phi_n\}$ span $W_0^{1,2}(\Omega)$, where $\phi_n$ is a normalized eigen-function corresponding to $\lambda_n$. Then, $u_n(x,t)=e^{-\lambda_n^t}\phi_n(x)$ is the solution of the heat equation with initial data $\phi_n(x)$.

\begin{proposition}
If we define
\begin{equation} \label{energy functional}
	\mathcal{E}[\phi,\psi]=\int_{\Omega} da_{22}\lvert \nabla \phi \rvert^2+da_{12} \lvert \nabla \psi \rvert^2 +a_{11}a_{22} \phi^2+2a_{12}a_{22}\phi\psi+a_{12}a_{21} \psi^2,
\end{equation}
then $\mathcal{E}$ corresponds to linear elliptic operator
\begin{equation*}
	\mathcal{L} \coloneqq \begin{pmatrix}
	a_{22}(d\Delta \phi +a_{11}) & a_{12}a_{22} \\
		a_{12}a_{22} & a_{12}(d\Delta\psi+ a_{21})
\end{pmatrix}.
\end{equation*}
\end{proposition}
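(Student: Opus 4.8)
\emph{Proof proposal.} The plan is to show that $\mathcal{E}$ is the Dirichlet-type energy functional attached to $\mathcal{L}$, in the sense that its first variation over $\big[W^{1,2}_0(\Omega)\big]^2$ is (twice) the $L^2$-pairing against $\mathcal{L}\binom{\phi}{\psi}$; equivalently, that the Euler--Lagrange system of $\mathcal{E}$ is $\mathcal{L}\binom{\phi}{\psi}=0$. The underlying point is that the operator $\mathcal{L}'$ of \eqref{operator L}, which governs the linearization at the DFE, is \emph{not} symmetric, whereas multiplying its first component equation by $a_{22}=\beta_1$ and its second by $a_{12}=(1-p)\alpha A/\mu_1$ turns $\mathcal{L}'$ into the symmetric operator $\mathcal{L}$; the assertion is that this symmetrized operator carries the variational structure recorded by $\mathcal{E}$.

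First I would fix $(\eta,\zeta)\in\big[W^{1,2}_0(\Omega)\big]^2$ and differentiate $t\mapsto\mathcal{E}[\phi+t\eta,\psi+t\zeta]$ at $t=0$. Since $a_{11},a_{12},a_{21},a_{22}$ are constants, each quadratic term contributes a factor $2$ and one obtains
\[
	\tfrac12\,\delta\mathcal{E}[\phi,\psi](\eta,\zeta)=\int_\Omega\Big(d\,a_{22}\,\nabla\phi\!\cdot\!\nabla\eta+d\,a_{12}\,\nabla\psi\!\cdot\!\nabla\zeta+a_{11}a_{22}\,\phi\eta+a_{12}a_{22}(\phi\zeta+\eta\psi)+a_{12}a_{21}\,\psi\zeta\Big).
\]
Next I would integrate the two gradient terms by parts and discard the boundary contributions using $\eta=\zeta=0$ on $\partial\Omega$, which yields
\[
	\tfrac12\,\delta\mathcal{E}[\phi,\psi](\eta,\zeta)=\int_\Omega\eta\big(-d\,a_{22}\Delta\phi+a_{11}a_{22}\phi+a_{12}a_{22}\psi\big)+\zeta\big(-d\,a_{12}\Delta\psi+a_{12}a_{22}\phi+a_{12}a_{21}\psi\big).
\]
After the integration by parts the two parenthesized expressions are exactly the two components of $\mathcal{L}\binom{\phi}{\psi}$ read off from the matrix in the statement (the principal part of $\mathcal{L}$ being understood with the sign that makes its bilinear form the Dirichlet integral, as is standard when passing from an operator to its energy), so $\tfrac12\,\delta\mathcal{E}[\phi,\psi]=\langle(\eta,\zeta),\mathcal{L}\binom{\phi}{\psi}\rangle_{L^2(\Omega)}$. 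Hence $(\phi,\psi)$ is a critical point of $\mathcal{E}$ over $\big[W^{1,2}_0(\Omega)\big]^2$ if and only if $\mathcal{L}\binom{\phi}{\psi}=0$ in the weak and, by elliptic regularity with constant coefficients, classical sense; dividing the two resulting equations by the nonzero constants $a_{22}$ and $a_{12}$ recovers the rescaled DFE system \eqref{DFE ODE}. This is the claimed correspondence.

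There is no real analytic obstacle here: the proof is a single integration by parts together with the bookkeeping of the factor $2$ coming from the quadratic terms. The one substantive point I would stress is that $\mathcal{E}$ is a \emph{coercive-type} Dirichlet energy precisely because the weights $a_{22}=\beta_1>0$ and $a_{12}=(1-p)\alpha A/\mu_1>0$ in front of $|\nabla\phi|^2$ and $|\nabla\psi|^2$ are strictly positive --- this is exactly why one rescales the asymmetric $\mathcal{L}'$ of \eqref{operator L} into the symmetric $\mathcal{L}$, so that $\mathcal{L}$ is formally self-adjoint and the nonlinear eigenvalue and energy-minimization methods of \cite{Lee_2008} apply to it, which is the purpose of introducing $\mathcal{E}$ in the first place.
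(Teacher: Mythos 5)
Your proposal is correct and follows essentially the same route as the paper's proof: compute the first variation of $\mathcal{E}$ along $[W^{1,2}_0(\Omega)]^2$ perturbations, integrate the gradient terms by parts with vanishing boundary contributions, and read off the (symmetrized) Euler--Lagrange operator $\mathcal{L}$ with the factor of $2$ from the quadratic terms. Your explicit remark about the sign convention on the principal part ($-d\Delta$ arising from the integration by parts versus the $+d\Delta$ written in the matrix) flags a discrepancy the paper's own computation glosses over, but it does not alter the argument.
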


\begin{proof}
If we define
\begin{equation*}
	\mathcal{E}[\phi,\psi]=\int_{\Omega} da_{22}\lvert \nabla \phi \rvert^2+da_{12} \lvert \nabla \psi \rvert^2 +a_{11}a_{22} \phi^2+2a_{12}a_{22}\phi\psi+a_{12}a_{21} \psi^2,
\end{equation*}
then,
\begin{equation*}
	\begin{aligned}
\mathcal{E}[\phi + \epsilon \varphi, \psi + \epsilon \chi] 
&= \int_{\Omega} a_{22} d |\nabla(\phi + \epsilon \varphi)|^2 
+ a_{12} d |\nabla(\psi + \epsilon \chi)|^2 \\
&\quad + a_{11} a_{22} (\phi + \epsilon \varphi)^2 + 2 a_{12} a_{22} (\phi + \epsilon \varphi)(\psi + \epsilon \chi) + a_{12} a_{21} (\psi + \epsilon \chi)^2 \\
&= \int_{\Omega} a_{22} d |\nabla \phi + \epsilon \nabla \varphi|^2 + a_{12} d |\nabla \psi + \epsilon \nabla \chi|^2 \\
&\quad + a_{11} a_{22} (\phi^2 + 2\epsilon \phi \varphi + \epsilon^2 \varphi^2) + a_{12} a_{21} (\psi^2 + 2\epsilon \psi \chi + \epsilon^2 \chi^2) \\
&\quad +2a_{12} a_{22}(\phi \psi + \epsilon \varphi \psi + \epsilon \phi \chi + \epsilon^2\varphi \chi),
\end{aligned}
\end{equation*}
so if we calculate derivative by $\epsilon$,
\begin{equation*}
	\begin{aligned}
\left. \frac{d}{d\epsilon} \mathcal{E}[\phi + \epsilon \varphi, \psi + \epsilon \chi] \right|_{\epsilon=0}
c+ 2a_{12}d \lvert \nabla \psi \cdot \nabla \chi \rvert \\
&\quad + 2a_{11}a_{22}\phi\varphi + 2a_{12}a_{22}(\psi\varphi + \phi\chi) + 2a_{12}a_{21}\psi\chi \\
&= \int_{\Omega} (-2a_{22}d (\Delta \varphi) \phi + 2a_{11}a_{22} \varphi \phi + 2a_{12}a_{22}\chi \phi ) \\
&\quad + (-2a_{12}d (\Delta \chi) \psi  + 2a_{12}a_{21} \chi \psi + 2a_{12}a_{22}\varphi \psi ) \\
&= \int_{\Omega} 
\begin{pmatrix}
-2a_{22}d_2 \Delta \phi + 2a_{11}a_{22}\phi + 2a_{12}a_{22}\psi \\
-2a_{12}d_3 \Delta \psi + 2a_{12}a_{21}\psi + 2a_{12}a_{22}\phi
\end{pmatrix}
\cdot
\begin{pmatrix}
\varphi \\ \chi
\end{pmatrix} \\
&= 2 \mathcal{L}
\begin{pmatrix}
\varphi \\ \chi
\end{pmatrix}
\cdot
\begin{pmatrix}
\phi \\ \psi
\end{pmatrix}
= 0
\end{aligned}
\end{equation*}
where
\begin{equation*}
	\mathcal{L} \coloneqq \begin{pmatrix}
	a_{22}(d\Delta \phi +a_{11}) & a_{12}a_{22} \\
		a_{12}a_{22} & a_{12}(d\Delta\psi+ a_{21})
\end{pmatrix}.
\end{equation*}
\end{proof}

By using the energy functional and linear operator, we will solve the optimization and find the solution of the elliptic eigenvalue problem. Then the eigenfunction and eigenvalue would be the key to the speed of the convergence.
\begin{lemma}
	The energy functional $\mathcal{E}$\cref{energy functional} is coercive.
\end{lemma}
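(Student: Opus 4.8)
The plan is to split $\mathcal{E}$ into its gradient part and its zeroth-order part, observe that the gradient coefficients are strictly positive, and absorb the (generally indefinite) zeroth-order part into the gradient part by means of the Dirichlet Poincaré inequality on the ball $\Omega_{h_\infty}=B_{h_\infty}$. Concretely, the target is the G\aa rding-type estimate
\begin{equation*}
	\mathcal{E}[\phi,\psi] \ge c_0\bigl(\lVert \nabla\phi\rVert_{L^2(\Omega_{h_\infty})}^2+\lVert\nabla\psi\rVert_{L^2(\Omega_{h_\infty})}^2\bigr) - C\bigl(\lVert\phi\rVert_{L^2(\Omega_{h_\infty})}^2+\lVert\psi\rVert_{L^2(\Omega_{h_\infty})}^2\bigr),\qquad c_0>0,\ C\ge 0,
\end{equation*}
which is precisely the form of coercivity that makes the constrained minimization $\min\{\mathcal{E}[\phi,\psi]:\lVert(\phi,\psi)\rVert_{L^2(\Omega_{h_\infty})}=1\}$ — whose minimizer is the principal eigenpair used to read off the decay rate — amenable to the direct method.

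First I would record the signs of the coefficients: $a_{22}=\beta_1>0$ and $a_{12}=(1-p)\alpha A/\mu_1>0$ for the biologically natural range $0\le p<1$, so both gradient coefficients $d a_{22}$ and $d a_{12}$ are strictly positive and one may take $c_0=d\min\{a_{22},a_{12}\}$. The zeroth-order part of $\mathcal{E}$ is the integral over $\Omega_{h_\infty}$ of the pointwise quadratic form $a_{11}a_{22}\phi^2+2a_{12}a_{22}\phi\psi+a_{12}a_{21}\psi^2$, whose diagonal coefficient $a_{11}a_{22}=-(\beta_1+r_1+\mu_2)\beta_1$ is negative, so the form is in general indefinite; nonetheless, by $2|\phi\psi|\le\phi^2+\psi^2$ it is bounded below by $-C(\phi^2+\psi^2)$ with $C=|a_{11}|a_{22}+a_{12}a_{22}+a_{12}|a_{21}|$, a constant depending only on the model parameters. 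Combining the two gives the displayed G\aa rding inequality, and with it the constrained minimization is well posed: on a minimizing sequence the subtracted term equals the fixed constant $C$, so the sequence is bounded in $H_0^1(\Omega_{h_\infty})\times H_0^1(\Omega_{h_\infty})$, which together with weak lower semicontinuity of the gradient term and the compact embedding $H_0^1\hookrightarrow L^2$ yields a minimizer.

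If genuine coercivity of $\mathcal{E}$ on $H_0^1(\Omega_{h_\infty})\times H_0^1(\Omega_{h_\infty})$ (that is $\mathcal{E}[\phi,\psi]\ge c\lVert(\phi,\psi)\rVert_{H_0^1}^2$) is wanted, I would further invoke $\lVert\nabla u\rVert_{L^2}^2\ge\lambda_1(h_\infty)\lVert u\rVert_{L^2}^2$ on the ball, with $\lambda_1(h_\infty)=\lambda_1(B_1)h_\infty^{-2}$, and substitute into the G\aa rding estimate to obtain $\mathcal{E}[\phi,\psi]\ge\bigl(c_0-C/\lambda_1(h_\infty)\bigr)(\lVert\nabla\phi\rVert^2+\lVert\nabla\psi\rVert^2)$, which is coercive once $h_\infty^2<\lambda_1(B_1)c_0/C$. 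Since the variational argument only requires the parameter-free G\aa rding form, I would read the lemma in that sense and note the sharper $H_0^1$-coercivity separately under this smallness condition on $h_\infty$.

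The main obstacle is that the zeroth-order quadratic form is genuinely indefinite — both $a_{11}a_{22}$ and, in the regime $\mathcal{R}_0<1$ of interest, $a_{12}a_{21}$ are negative — so there is no pointwise lower bound and all the coercivity has to be mined from the principal part; hence the argument hinges entirely on the structural facts $a_{22}>0$ and $a_{12}>0$ together with Poincaré. The borderline case is $p=1$, where $a_{12}=0$ removes the $\lVert\nabla\psi\rVert$ term from $\mathcal{E}$ and one would instead have to recover $\lVert\nabla\psi\rVert$ from the second equation by elliptic estimates; I would avoid this by assuming $p<1$.
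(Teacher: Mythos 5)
Your proposal follows essentially the same route as the paper's proof: isolate the principal part with coefficient $c_0=d\min\{a_{22},a_{12}\}>0$, bound the indefinite zeroth-order form below by $-C(\phi^2+\psi^2)$ via Young's inequality on the cross term, and then invoke the Poincar\'e inequality on the bounded domain. The one substantive difference is that you are more honest about the endgame: the paper's final display produces a constant of the form $\min\{a_{22}d,a_{12}d\}-\bigl(\max\{|a_{11}|a_{22},a_{12}|a_{21}|\}-\tfrac{a_{12}a_{22}}{2}\bigr)C$ and simply asserts that the resulting bound is coercive, without checking that this combined constant is positive; in general it need not be, since $a_{11}a_{22}<0$ and (for $\mathcal{R}_0<1$) $a_{12}a_{21}<0$. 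You correctly observe that what one gets unconditionally is only a G\aa rding-type inequality, that this already suffices to run the direct method on the constrained problem $\min\{\mathcal{E}:\lVert(\phi,\psi)\rVert_{L^2}=1\}$ (which is what the subsequent eigenvalue argument actually needs), and that genuine $H_0^1$-coercivity requires the smallness condition $h_\infty^2<\lambda_1(B_1)c_0/C$ so that the Poincar\'e constant absorbs the negative lower-order terms. This reading repairs a real gap in the paper's own argument rather than introducing one; your remark on the degenerate case $p=1$ (where $a_{12}=0$ and the $\lVert\nabla\psi\rVert^2$ term disappears) is likewise a legitimate caveat that the paper does not address.
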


\begin{proof}
	Since $a_{12},a_{22},d_2,d_3>0$,
	\begin{equation*}
		\int_{\Omega_h}a_{22}d_2\lvert \nabla \phi \rvert^2 + a_{12}d_3\lvert \nabla \psi \rvert^2 \geq \min \{ a_{22}d_2, a_{12}d_3 \} \int_{\Omega_h}\vert \nabla \phi \rvert^2 + \lvert \nabla \psi \rvert^2.
	\end{equation*}
	Moreover,
	\begin{equation*}
		\int_{\Omega_h}a_{11}a_{22}\phi^2+a_{12}a_{21}\psi^2 \geq -\max \{ \lvert a_{11} \rvert a_{22}, a_{12}\lvert a_{21} \rvert \} \int_{\Omega_h}\phi^2+\psi^2.
	\end{equation*}
	Since $a_{12}, a_{22}>0$,
	\begin{equation*}
		a_{12}a_{22}\lvert \phi \psi \rvert (x) \leq \frac{a_{12}a_{22}}{2}(\phi^2+\psi^2)(x)
	\end{equation*}
	for each $x\in \Omega_h$ by Young's inequality, so
	\begin{equation*}
		\int_{\Omega_h}a_{12}a_{22}\phi \psi \geq - \frac{a_{12}a_{22}}{2}(\phi^2+\psi^2).
	\end{equation*}
	Therefore,
	\begin{equation*}
	\begin{aligned}
		I[\phi,\psi] \geq &\min \{ a_{22}d_2,a_{12}d_3\}(\lVert \nabla \phi \rVert _{L^2}^2+\lVert \nabla \psi \rVert _{L^2}^2)\\
		&+\left(\max \{ \lvert a_{11} \rvert a_{22}, a_{12}\lvert a_{21} \rvert\} - \frac{a_{12}a_{22}}{2}\right)(\lVert \phi \rVert _{L^2}^2+\lVert \psi \rVert _{L^2}^2).
	\end{aligned}
	\end{equation*}
	Thus,
	\begin{equation*}
		I[\phi,\psi] \geq (\min \{ a_{22}d_2, a_{12}d_3 \} -\left(\max \{ \lvert a_{11} \rvert a_{22}, a_{12} \lvert a_{21} \rvert \} - \frac{a_{12}a_{22}}{2}\right)C(\lVert \phi \rVert_{L^2}^2+\lVert \psi \rVert _{L^2}^2),
	\end{equation*}
	by Poincar\'e inequality
	\begin{equation*}
		\lVert \phi \rVert _{L^2}^2 + \lVert \psi \rVert _{L^2}^2 \leq C (\lVert \nabla \phi \rVert _{L^2}^2 + \lVert \nabla \psi \rVert _{L^2}^2).
	\end{equation*}
	\begin{equation*}
		I[\phi, \psi ]\geq \alpha \lVert (\phi, \psi ) \rVert _{H_0^1(\Omega_h)^2}^2,
	\end{equation*}
	i.e., the energy is coercive.
\end{proof}

\begin{lemma}
	The energy functional $\mathcal{E}$\cref{energy functional} is lower semi-continuous.
\end{lemma}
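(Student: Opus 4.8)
The plan is to establish sequential weak lower semi-continuity of $\mathcal{E}$ on $H_0^1(\Omega_{h_\infty})^2$ by splitting it into its principal (gradient) part and its zeroth-order part, and treating each with a standard tool: weak lower semi-continuity of a nonnegative quadratic form for the principal part, and strong $L^2$-compactness (Rellich--Kondrachov) for the zeroth-order part. Recall from the previous lemma that $\mathcal{E}$ is coercive on $H_0^1(\Omega_{h_\infty})^2$, so any minimizing sequence is bounded there; hence it suffices to prove that $(\phi_k,\psi_k)\rightharpoonup(\phi,\psi)$ weakly in $H_0^1(\Omega_{h_\infty})^2$ implies $\mathcal{E}[\phi,\psi]\le\liminf_{k\to\infty}\mathcal{E}[\phi_k,\psi_k]$.

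First I would isolate the principal part
\[
\mathcal{E}_0[\phi,\psi]:=\int_{\Omega_{h_\infty}} d a_{22}\lvert\nabla\phi\rvert^2+d a_{12}\lvert\nabla\psi\rvert^2 .
\]
Since the $a_{ij}$ are constants with $a_{12}=(1-p)\alpha\frac{A}{\mu_1}>0$, $a_{22}=\beta_1>0$, and $d>0$, the form $\mathcal{E}_0$ is nonnegative, hence a convex and strongly continuous functional on $H_0^1(\Omega_{h_\infty})^2$; consequently it is weakly lower semi-continuous, giving $\mathcal{E}_0[\phi,\psi]\le\liminf_{k\to\infty}\mathcal{E}_0[\phi_k,\psi_k]$. (Equivalently, this is the weak lower semi-continuity of the $H_0^1$-seminorm.) Next I would treat the zeroth-order part
\[
\mathcal{E}_1[\phi,\psi]:=\int_{\Omega_{h_\infty}} a_{11}a_{22}\phi^2+2a_{12}a_{22}\phi\psi+a_{12}a_{21}\psi^2 .
\]
Because $\Omega_{h_\infty}=B_{h_\infty}$ is bounded with smooth boundary (here the finiteness $h_\infty<\infty$ established earlier is essential), the Rellich--Kondrachov theorem yields, after passing to a subsequence realizing the $\liminf$, $\phi_k\to\phi$ and $\psi_k\to\psi$ strongly in $L^2(\Omega_{h_\infty})$. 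Then $\phi_k^2\to\phi^2$ and $\psi_k^2\to\psi^2$ in $L^1$, and $\phi_k\psi_k\to\phi\psi$ in $L^1$ by Cauchy--Schwarz (the product of two $L^2$-convergent sequences converges in $L^1$), so $\mathcal{E}_1[\phi_k,\psi_k]\to\mathcal{E}_1[\phi,\psi]$; that is, $\mathcal{E}_1$ is weakly continuous.

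Finally I would combine the two pieces: writing $\mathcal{E}=\mathcal{E}_0+\mathcal{E}_1$ and using the chosen subsequence,
\[
\mathcal{E}[\phi,\psi]=\mathcal{E}_0[\phi,\psi]+\mathcal{E}_1[\phi,\psi]\le\liminf_{k\to\infty}\mathcal{E}_0[\phi_k,\psi_k]+\lim_{k\to\infty}\mathcal{E}_1[\phi_k,\psi_k]\le\liminf_{k\to\infty}\mathcal{E}[\phi_k,\psi_k],
\]
and a routine subsequence-of-subsequences argument upgrades this to the full sequence, proving lower semi-continuity. I do not expect a serious obstacle; the only point needing care is the cross term $\int\phi\psi$, which is not controlled by convexity of the principal form and must instead be absorbed into the compact zeroth-order part, which is exactly why boundedness of $\Omega_{h_\infty}$ (hence Rellich--Kondrachov) is invoked. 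Together with coercivity and this lower semi-continuity, the direct method then produces a minimizer, which by the preceding proposition solves the elliptic eigenvalue system $\mathcal{L}(\phi,\psi)^{\mathsf T}=0$ and governs the convergence speed of $(E,I)$.
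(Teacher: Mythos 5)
Your proposal is correct and follows essentially the same route as the paper: the same decomposition of $\mathcal{E}$ into the gradient quadratic form (handled by convexity/weak lower semi-continuity of a nonnegative quadratic functional) and the zeroth-order terms (handled by Rellich--Kondrachov compactness giving strong $L^2$ convergence, hence weak continuity). Your version is in fact slightly more careful than the paper's, in spelling out the $L^1$ convergence of the cross term $\phi_k\psi_k$ and the subsequence bookkeeping.
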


\begin{proof}
	Define
	\begin{equation*}
		\mathcal{E}_1[\phi,\psi]=\int_{\Omega_h}a_{22}d_2\lvert \nabla \phi \rvert ^2 + a_{12}d_3\lvert \nabla \psi \rvert ^2.
	\end{equation*}
	Then
	\begin{equation*}
		\mathcal{E}_1[\phi, \psi] = \lVert (\phi, \psi ) \rVert_{H_0^1(\Omega_h)^2}^2
	\end{equation*}
	and is a convex functional. Since a convex functional in the Sobolev space is lower semi-continuous, $\mathcal{E}_1$ is lower semi-continuous. Next, define
	\begin{equation*}
		\mathcal{E}_2[\phi, \psi]=\int_{\Omega_h}a_{11}a_{22}\phi^2+a_{12}a_{21}\psi^2+2a_{12}a_{22}\phi \psi.
	\end{equation*}
	By Rellich-Kondrachov compact embedding,
	\begin{equation*}
		(\phi_n, \psi_n) \rightharpoonup (\phi, \psi) \text{ in }H_0^1(\Omega_h) \Rightarrow(\phi_n,\psi_n)\to (\phi,\psi) \text{ strongly in }L^2.
	\end{equation*}
	Then
	\begin{equation*}
		\mathcal{E}_2[\phi_n,\psi_n] \to \mathcal{E}_2[\phi,\psi],
	\end{equation*}
	i.e., $\mathcal{E}_2$ is lower semi-continuous. Therefore, $\mathcal{E}=\mathcal{E}_1+\mathcal{E}_2$ is also lower semi-continuous.
\end{proof}

\begin{proposition} \label{theorem 4.13}
	Assume the admissible set $\mathcal{A}$ is nonempty. Then there exists $u\in\mathcal{A}$ satisfying
	\begin{equation*}
		\mathcal{E}[u, v]=\min_{(w,x)\in \mathcal{A}} \mathcal{E}[w,x].
	\end{equation*}
\end{proposition}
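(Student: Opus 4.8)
The plan is to run the direct method of the calculus of variations, using the two structural facts already proved: that $\mathcal{E}$ is coercive on $H_0^1(\Omega_h)^2$ and that $\mathcal{E}$ is (weakly) lower semicontinuous. Set $m := \inf_{(w,x)\in\mathcal{A}}\mathcal{E}[w,x]$. Because $\mathcal{A}\neq\emptyset$ and coercivity gives $\mathcal{E}[\phi,\psi]\geq \alpha\lVert(\phi,\psi)\rVert_{H_0^1(\Omega_h)^2}^2\geq 0$, the infimum $m$ is finite, so we may choose a minimizing sequence $(\phi_n,\psi_n)\in\mathcal{A}$ with $\mathcal{E}[\phi_n,\psi_n]\to m$.

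Next I would extract a weak limit. From $\mathcal{E}[\phi_n,\psi_n]\to m$ together with coercivity, the sequence $(\phi_n,\psi_n)$ is bounded in the Hilbert space $H_0^1(\Omega_h)^2$; by reflexivity a subsequence, still denoted $(\phi_n,\psi_n)$, converges weakly to some $(\phi,\psi)\in H_0^1(\Omega_h)^2$. By the Rellich--Kondrachov theorem (the embedding $H_0^1(\Omega_h)\hookrightarrow L^2(\Omega_h)$ is compact since $\Omega_h$ is bounded), we also have $(\phi_n,\psi_n)\to(\phi,\psi)$ strongly in $L^2$.

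The crucial point is that the limit $(\phi,\psi)$ still lies in $\mathcal{A}$, i.e.\ that $\mathcal{A}$ is sequentially weakly closed. The homogeneous Dirichlet condition is automatic because $H_0^1(\Omega_h)^2$ is a closed, hence weakly closed, subspace; the normalization constraint built into $\mathcal{A}$ (e.g.\ $\lVert\phi\rVert_{L^2}^2+\lVert\psi\rVert_{L^2}^2=1$, or $L^2$-orthogonality to previously obtained eigenfunctions) is preserved precisely because the strong $L^2$ convergence above makes these $L^2$-expressions continuous along the sequence. Hence $(\phi,\psi)\in\mathcal{A}$.

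Finally, weak lower semicontinuity of $\mathcal{E}$ (the preceding lemma: convexity of the gradient part $\mathcal{E}_1$ gives weak lower semicontinuity, and the lower-order part $\mathcal{E}_2$ is weakly continuous by the same compact embedding) yields $\mathcal{E}[\phi,\psi]\leq\liminf_{n\to\infty}\mathcal{E}[\phi_n,\psi_n]=m$; combined with $(\phi,\psi)\in\mathcal{A}$, which forces $\mathcal{E}[\phi,\psi]\geq m$, we obtain $\mathcal{E}[\phi,\psi]=m$, so $(\phi,\psi)$ is the sought minimizer. I expect the main obstacle to be the weak closedness of $\mathcal{A}$: a norm constraint is not weakly closed in general, and its survival under the weak limit rests entirely on the compactness of $H_0^1(\Omega_h)\hookrightarrow L^2(\Omega_h)$; making this step airtight really requires pinning down the exact definition of the admissible set $\mathcal{A}$.
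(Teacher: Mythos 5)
Your proposal is correct and follows essentially the same route as the paper's proof: the direct method with a minimizing sequence, coercivity giving $H_0^1$-boundedness and a weak limit, the compact embedding $H_0^1(\Omega_h)\hookrightarrow L^2(\Omega_h)$ preserving the admissibility constraint, and the previously established weak lower semicontinuity closing the argument. If anything, your write-up is more explicit than the paper's (which leaves the lower semicontinuity step implicit and only verifies that the constraint functional passes to the limit), and your closing caveat about needing the precise definition of $\mathcal{A}$ is well taken, since the paper never states it.
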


\begin{proof}
	Set
	\begin{equation*}
		I^*=\min_{(\phi,\psi)\in K}I[\phi,\psi] \swarrow I[\phi_k,\psi_k] \quad \text{for } w_k\in K.
	\end{equation*}
	Since the energy is coercive, $\lVert \phi_k, \psi_k \rVert_{H_0^1(\Omega_h)^2}\leq C$ uniformly and there exists $(\phi_0,\psi_0)\in H^1(\Omega_h)^2$ such that $(\phi_n,\psi_n) \rightharpoonup(\phi_0,\psi_0)$ in $H_0^1(\Omega_h)$.
	Furthermore, 
	\begin{equation*}
		H_1\hookrightarrow L^q \quad \text{for }q<2^*=\frac{2n}{n-2}
	\end{equation*}
	by the $L^p$ theory.
	\begin{equation*}
	\begin{aligned}
		\lvert J[\phi_0,\psi_0] \rvert &= \lvert J[\phi_0,\psi_0]-J[\phi_k,\psi_k]\rvert \\
		&\leq \int_{\Omega_h}\lvert G(\phi_0,\psi_0)-G(\phi_k,\psi_k)\rvert=\int_{\Omega_h}\lvert(\phi_0^2+\psi_0^2)-(\phi_k^2+\psi_k^2)\rvert\\
		&\leq \int_{\Omega_h}\lvert\phi_0-\phi_k\rvert \lvert \phi_0-\phi_k \rvert + \lvert \psi_0 - \psi_k \rvert \lvert \psi_0 + \psi_k \rvert\\
		&\leq \int_{\Omega _h} \lvert \phi_0 - \phi_k \rvert (\lvert \phi_0 \rvert + \lvert \phi_k \rvert) + \lvert \psi_0-\psi_k \rvert (\lvert \psi_0 \rvert + \lvert \psi_k \rvert) \to 0,
	\end{aligned}
	\end{equation*}
	as $k \to \infty$. Therefore, there exists $(\phi, \psi) \in K$ satisfying $\displaystyle I[\phi, \psi]=\min_{[\varphi, \chi] \in K}I[\varphi, \chi]$
\end{proof}

\begin{proposition}
	The minimizer $(w,z)$ of the energy functional \cref{energy functional} of \cref{theorem 4.13} is the solution of the Euler-Lagrange equation
	\begin{equation*}
		\mathcal{L}\begin{pmatrix}
			\phi\\
			\psi
		\end{pmatrix}=2\lambda \begin{pmatrix}
			\phi\\
			\psi
		\end{pmatrix},
	\end{equation*}
	where
	\begin{equation*} \label{euler lagrange}
		\lambda = \begin{pmatrix}
			\lambda_{11}\\
			\lambda_{12}
		\end{pmatrix}.
	\end{equation*}
\end{proposition}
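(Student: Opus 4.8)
The plan is to recover the stated system as the Euler--Lagrange equation of the constrained minimization problem solved in \cref{theorem 4.13}, via a Lagrange multiplier argument. Denote by $G_1[\phi]:=\int_{\Omega_h}\phi^2$ and $G_2[\psi]:=\int_{\Omega_h}\psi^2$ the normalization functionals fixed by membership in the admissible set $\mathcal{A}$; the two numbers $\lambda_{11},\lambda_{12}$ appearing in the statement will be the multipliers attached to $G_1$ and $G_2$. Both $\mathcal{E}$ in \cref{energy functional} and the $G_i$ are $C^1$ on $H_0^1(\Omega_h)^2$, and, granting that the constraints force the minimizer $(w,z)$ to be nontrivial, the derivatives $G_1'[w]$ and $G_2'[z]$ are linearly independent there. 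The Lagrange multiplier rule in Hilbert space then yields $\lambda_{11},\lambda_{12}\in\mathbb{R}$ with
\begin{equation*}
\mathcal{E}'[w,z]=2\lambda_{11}\,G_1'[w]+2\lambda_{12}\,G_2'[z]
\end{equation*}
in the dual of $H_0^1(\Omega_h)^2$.

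The next step is to pair both sides with an arbitrary $(\varphi,\chi)\in C_c^\infty(\Omega_h)^2$. On the right, $G_1'[w](\varphi,\chi)=2\int_{\Omega_h}w\varphi$ and $G_2'[z](\varphi,\chi)=2\int_{\Omega_h}z\chi$. On the left I would reuse the first-variation identity
\begin{equation*}
\left.\frac{d}{d\epsilon}\mathcal{E}[\phi+\epsilon\varphi,\psi+\epsilon\chi]\right|_{\epsilon=0}=2\int_{\Omega_h}\mathcal{L}\begin{pmatrix}\phi\\\psi\end{pmatrix}\cdot\begin{pmatrix}\varphi\\\chi\end{pmatrix}
\end{equation*}
established just before the coercivity lemma, using that the operator $\mathcal{L}$ introduced above is symmetric and is exactly the one associated with the quadratic form $\mathcal{E}$. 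Setting $(\phi,\psi)=(w,z)$, combining the three computations, and writing $\lambda=\begin{pmatrix}\lambda_{11}\\\lambda_{12}\end{pmatrix}$ acting componentwise gives
\begin{equation*}
\int_{\Omega_h}\left(\mathcal{L}\begin{pmatrix}w\\z\end{pmatrix}-2\lambda\begin{pmatrix}w\\z\end{pmatrix}\right)\cdot\begin{pmatrix}\varphi\\\chi\end{pmatrix}=0
\end{equation*}
for every $(\varphi,\chi)\in C_c^\infty(\Omega_h)^2$.

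By the fundamental lemma of the calculus of variations the bracketed integrand vanishes almost everywhere, so $(w,z)$ solves $\mathcal{L}\begin{pmatrix}\phi\\\psi\end{pmatrix}=2\lambda\begin{pmatrix}\phi\\\psi\end{pmatrix}$ weakly, with the Dirichlet boundary condition inherited from $H_0^1(\Omega_h)^2$. To finish I would upgrade regularity: since $\mathcal{L}$ is, up to the positive constants $a_{22}$ and $a_{12}$, a weakly coupled pair of Dirichlet Laplacians on the ball $\Omega_h$, componentwise $L^p$ and Schauder estimates bootstrapped through the coupling terms give $(w,z)\in C^2(\overline{\Omega_h})^2$, so the system holds classically.

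The main obstacle I anticipate is the rigorous justification of the Lagrange rule: one must check that the constraint map $(\phi,\psi)\mapsto(G_1[\phi],G_2[\psi])$ has surjective derivative at $(w,z)$, which reduces to $w\not\equiv 0$ and $z\not\equiv 0$ and hence to a careful reading of how $\mathcal{A}$ is defined; equivalently, admissible variations must be renormalized to remain in $\mathcal{A}$ and one must verify this renormalization contributes nothing at first order. Tracking the constant factors so that the multiplier appears as $2\lambda$ rather than $\lambda$ is likewise tied to the precise normalization chosen in $\mathcal{A}$.
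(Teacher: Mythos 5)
Your proposal is correct in spirit and lands on the same underlying mechanism as the paper -- a Lagrange multiplier argument for the constrained minimizer of $\mathcal{E}$ -- but the two arguments are packaged differently, and the difference is worth recording. The paper runs the classical hands-on version: it fixes a direction $(u,v)$, builds a two-parameter perturbation $(\phi,\psi)+\tau(u,v)+\sigma(w,z)$, uses the implicit function theorem on the single scalar constraint functional $j(\tau,\sigma)=\int G$ (with $G(\phi,\psi)=\phi^2+\psi^2$) to produce $\sigma=\varphi(\tau)$ keeping the perturbation admissible, and then differentiates $i(\tau)=\mathcal{E}[\cdot]$ at $\tau=0$. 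This yields a \emph{single scalar} multiplier, which the paper then identifies explicitly as the Rayleigh quotient $\lambda=\mathcal{E}[\phi,\psi]\big/\bigl(2\int(\phi^2+\psi^2)\bigr)$. You instead impose \emph{two} separate normalization constraints $\int\phi^2$ and $\int\psi^2$ and invoke the abstract Hilbert-space Lagrange rule, producing two multipliers $\lambda_{11},\lambda_{12}$ acting componentwise. Interestingly, your version matches the \emph{statement} of the proposition (which declares $\lambda$ to be the vector $(\lambda_{11},\lambda_{12})^{T}$) better than the paper's own proof does, since the paper's construction only ever produces one scalar; the discrepancy traces back to the fact that the admissible set $\mathcal{A}$ is never defined in the paper, so neither reading can be checked against it -- a gap you correctly flag as the main obstacle. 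Two further points in your favor: your explicit verification that the constraint derivatives are surjective (i.e., $w\not\equiv0$, $z\not\equiv0$) is a hypothesis the paper glosses over with the ad hoc assumption $g(\phi,\psi)\neq0$ a.e., and your concluding regularity bootstrap from weak to classical solution is absent from the paper entirely. What the paper's route buys in exchange is the explicit Rayleigh-quotient formula for $\lambda$, which is what feeds the later convergence-rate theorem; if you adopt the two-multiplier formulation you would need to supply the analogous variational characterization of the first eigenvalue separately.
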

\begin{proof}
	Fix $(u,v)\in H_0^1(\Omega_h)^2$. Then, assume
	\begin{equation*}
		g(\phi, \psi)=\begin{pmatrix} 2\phi \\
		2\psi \end{pmatrix} \neq 0
	\end{equation*}
	almost everywhere within $\Omega_h$.
	Choose any function $(w,z)\in H_0^1(\Omega_h)^2$ such that 
	\begin{equation*}
		\int_{\Omega_h}g(\phi, \psi)(w,z)\neq 0.
	\end{equation*}
	Let
	\begin{equation*}
		j(\tau, \sigma)=\int_{\Omega_h}G((\phi,\psi)+\tau(u,v)+\sigma(w,z)).
	\end{equation*}
	Then, by substituting $\tau=0$ and $\sigma=0$, we can obtain
	\begin{equation*}
		j(0,0)=\int_{\Omega_h}G((\phi,\psi))=0.
	\end{equation*}
	For the case of the derivative with respect to $\sigma$,
	\begin{equation*}
		\frac{\partial j}{\partial \sigma}(\tau, \sigma)=\int_{\Omega_h}g((\phi,\psi)+\tau(u,v)+\sigma(w,z))(w,z),
	\end{equation*}
	and substituting $\tau=0$ and $\sigma=0$,
	\begin{equation*}
		\frac{\partial j}{\partial \sigma}(0,0)=\int_{\Omega_h}g(\phi,\psi)(w,z)\neq 0.
	\end{equation*}
	By Implicit Function Theorem, there exists $C^1$ function $\varphi : \mathbb{R} \to \mathbb{R}$ such that
	\begin{equation*}
		\varphi(0)=0 \text{ and } j(\tau,\varphi(\tau))=0,
	\end{equation*}
	for any sufficiently small $\tau$, i.e., $\lvert \tau \rvert \leq \tau_0$.
	Let
	\begin{equation*}
	\begin{aligned}
		i(\tau) \coloneqq \mathcal{E}[(\phi&, \psi)+\tau(u,v)+\varphi(\tau)(w,z)]\\
		=\int_{\Omega_h}&a_{22}d_2\lvert \nabla (\phi+\tau u+\varphi(\tau)w)\rvert^2+a_{12}d_3\lvert \nabla (\phi+\tau v+\varphi(\tau)z)\rvert^2\\
		&+a_{11}a_{22}(\phi+\tau u+\varphi(\tau)w)^2+2a_{12}a_{22}(\psi+\tau v+\varphi(\tau)w)(\phi+\tau u+\varphi(\tau)z)\\
		&+a_{12}a_{21}(\psi+\tau v+\varphi(\tau)z)^2.
	\end{aligned}
	\end{equation*}
	Then, $i(\tau)$ has the minimum at $\tau=0$, and for
	\begin{equation*}
		\begin{aligned}
			&w(\tau)=(\phi,\psi)+\tau(u,v)+\varphi(\tau)(w,z),\\
			&w'(\tau)=(u,v)+\varphi'(\tau)(w,z),
		\end{aligned}
	\end{equation*}
	\begin{equation*}
	\begin{aligned}
		\frac{\partial i}{\partial w}=&2\int_{\Omega_h}a_{22}d_2\nabla \phi_\tau \cdot \nabla h_1 +a_{12}d_3 \nabla \psi \cdot \nabla h_2 + a_{11}a_{22}\phi h_1\\
		&+a_{12}a_{22}(\phi h_2+\psi h_1)+a_{12}a_{21}\psi h_2,
	\end{aligned}
	\end{equation*}
	and
	\begin{equation*}
	\begin{aligned}
		i'(\tau)=\frac{d}{d\tau}\mathcal{E}[w(\tau)]=2\int_{\Omega_h}&a_{22}d_2\nabla \phi_\tau \cdot u_\tau + a_{12}d_3\nabla \psi_\tau \cdot v_\tau +a_{11}a_{22}\phi_\tau u_\tau \\
		&+a_{12}a_{22}(\phi_\tau v_\tau+\psi_\tau u_\tau)+a_{12}a_{21}\psi _\tau v_\tau,
	\end{aligned}
	\end{equation*}
	where $u_\tau=u+\varphi'(\tau)w$ and $v_\tau=v+\varphi'(\tau)z$.
	Thus,
	\begin{equation*}
	\begin{aligned}
		i'(0)=2\int_{\Omega_h}&a_{22}d_2\nabla \phi \cdot \nabla u+a_{12}d_3 \nabla \psi \cdot \nabla v+a_{11}a_{22}\phi u\\
		&+a_{12}a_{22}(\phi v+\psi u)a_{12}a_{21}\psi v =0,
	\end{aligned}
	\end{equation*}
	for any $(u,v)$, and it is equal to
	\begin{equation*}
		\left \{ \begin{aligned}
			-a_{22}d_2\Delta \phi +a_{11}a_{22}\phi +a_{12}a_{22}\psi =0,\\
			-a_{12}d_3\Delta \psi +a_{12}a_{21}\psi +a_{12}a_{22}\phi =0.
		\end{aligned} \right.
	\end{equation*}
	We can also write
	\begin{equation*}
		\mathcal{L}\begin{pmatrix}
			\phi\\
			\psi
		\end{pmatrix}=0,
	\end{equation*}
	with linear operator $\mathcal{L}$. Let
	\begin{equation*}
	\begin{aligned}
		\lambda \coloneqq &\frac{\int Du\cdot Dw dx}{\int g(u)w dx}\\
		=&\frac{\int a_{22}d_2\lvert \nabla \phi \rvert ^2+a_{12}d_3\lvert \psi \rvert^2 +a_{12}a_{22}\phi^2+2a_{12}a_{22}\phi\psi+a_{12}a_{21}\psi^2}{\int (2\phi,2\psi)\cdot(\phi,\psi)}\\
		=&\frac{\int a_{22}d_2\lvert \nabla \phi \rvert^2+a_{12}d_3\lvert \nabla \psi \rvert^2+a_{12}a_{22}\phi^2+2a_{12}a_{22}\phi\psi+a_{12}a_{21}\psi^2}{2\int \psi^2+\psi^2}.
	\end{aligned}
	\end{equation*}
	Then, the minimizer is a weak solution of
	\begin{equation*}
	\begin{aligned}
		&\int a_{22}d_2 \nabla \phi \cdot \nabla u +a_{12}d_3\nabla \psi \cdot \nabla v +a_{11}a_{22}\phi u+a_{12}a_{22}(\phi v+\psi u)+a_{12}a_{21}\psi v\\
		&=\lambda \int (2\phi,2\psi) \cdot (u,v),
	\end{aligned}
	\end{equation*}
	for any $(u,v)\in H_0^1(\Omega_h)^2$.
	By integration by parts and boundary conditions,
	\begin{equation*}
		\left \{ \begin{aligned}
			-a_{22}d_2\Delta \phi +a_{11}a_{22}\phi +a_{12}a_{22}\psi =2\lambda \phi,\\
			-a_{12}d_3\Delta \psi +a_{12}a_{21}\psi +a_{12}a_{22}\phi =2\lambda \psi.
		\end{aligned} \right.
	\end{equation*}
	It means that the minimizer $(\phi,\psi)$ satisfies
	\begin{equation*}
		\mathcal{L}\begin{pmatrix}
			\phi\\
			\psi
		\end{pmatrix}=2\lambda \begin{pmatrix}
			\phi\\
			\psi
		\end{pmatrix}.
	\end{equation*}
\end{proof}

\begin{lemma}
	The solution of the Euler-Lagrange equation \cref{euler lagrange} is the super solution of the equations \cref{DFE ODE}. Furthermore, the solution of the Euler-Lagrange equation \cref{euler lagrange} is the super solution of the equations \cref{modified free boundary SEIS4} with \cref{modified free boundary SEIS4 cond}.
\end{lemma}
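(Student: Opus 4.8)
The plan is to start from the Euler--Lagrange system already established for the energy minimizer $(\phi,\psi)$, namely
\begin{equation*}
 \mathcal L\begin{pmatrix}\phi\\ \psi\end{pmatrix}=2\lambda\begin{pmatrix}\phi\\ \psi\end{pmatrix},
\end{equation*}
and to rescale its two scalar components so that their principal parts coincide with those of \cref{DFE ODE}. Reading off the rows of $\mathcal L$, the first equation is $a_{22}\bigl(d\Delta\phi+a_{11}\phi\bigr)+a_{12}a_{22}\psi=2\lambda\phi$ and the second is $a_{12}a_{22}\phi+a_{12}\bigl(d\Delta\psi+a_{21}\psi\bigr)=2\lambda\psi$. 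Since $a_{22}=\beta_1>0$ and $a_{12}=(1-p)\alpha A/\mu_1>0$, I would divide the first by $a_{22}$ and the second by $a_{12}$, obtaining
\begin{equation*}
 d\Delta\phi+a_{11}\phi+a_{12}\psi=\frac{2\lambda}{a_{22}}\,\phi,\qquad
 d\Delta\psi+a_{21}\psi+a_{22}\phi=\frac{2\lambda}{a_{12}}\,\psi .
\end{equation*}
The left-hand sides are exactly the two operators appearing in \cref{DFE ODE}, so the statement reduces to showing that the residual $\bigl(\tfrac{2\lambda}{a_{22}}\phi,\ \tfrac{2\lambda}{a_{12}}\psi\bigr)$ is pointwise $\le 0$, since a supersolution of \cref{DFE ODE} is a pair making both left-hand sides nonpositive.

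To control this residual I would invoke that \cref{DFE ODE} is a \emph{cooperative} system: the off-diagonal couplings $a_{12}$ and $a_{22}=\beta_1$ are positive. Hence the lowest mode produced by the preceding minimization --- the minimizer of $\mathcal E$ on the constraint sphere $\int_{\Omega_{h_\infty}}(\phi^2+\psi^2)=\mathrm{const}$ --- can be taken with $\phi\ge 0$ and $\psi\ge 0$, strictly positive in the interior by the strong maximum principle, and the Lagrange multiplier $\lambda=\mathcal E[\phi,\psi]\big/\bigl(2\!\int(\phi^2+\psi^2)\bigr)$ is nonpositive in the regime in which this eigenvalue construction is used (the stable side of the dichotomy, where the minimal energy is $\le 0$). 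These two facts give $\tfrac{2\lambda}{a_{22}}\phi\le 0$ and $\tfrac{2\lambda}{a_{12}}\psi\le 0$ a.e., hence $d\Delta\phi+a_{11}\phi+a_{12}\psi\le 0$ and $d\Delta\psi+a_{21}\psi+a_{22}\phi\le 0$; together with $\phi=\psi=0$ on $\partial\Omega_{h_\infty}$, inherited from the admissible class, this says $(\phi,\psi)$ is a supersolution of \cref{DFE ODE}.

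For the second assertion I would transfer this to the full transformed system \cref{modified free boundary SEIS4}. Two ingredients do the work: first $0<S\le A/\mu_1$ (shown in the proofs of \cref{longtime} and \cref{theorem 4.2}), so that for nonnegative pairs the genuine reaction terms are dominated by their disease-free linearizations, $(1-p)\alpha S\psi\le a_{12}\psi$ and $\beta_1\phi+p\alpha S\psi\le a_{22}\phi+(p\alpha A/\mu_1)\psi$; and second, as $t\to\infty$ the transformed coefficients satisfy $X\to1$, $Y\to0$, $W\to0$, $Z\to\xi$ and $h'(t)\to0$, so every transport and variable-coefficient term in \cref{modified free boundary SEIS4} is $o(1)$. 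Inserting the time-independent pair $(\phi,\psi)$ into the $\phi$-line of \cref{modified free boundary SEIS4} and using $\phi_t=0$, these facts give, for $t$ large,
\begin{equation*}
 -Xd\Delta_s\phi-(Yd+Zh'+Wd)\phi_s-\bigl[(1-p)\alpha S\psi-(\beta_1+r_1+\mu_2)\phi\bigr]\ \ge\ -\bigl(d\Delta\phi+a_{11}\phi+a_{12}\psi\bigr)-o(1)\ =\ -\frac{2\lambda}{a_{22}}\phi-o(1),
\end{equation*}
and likewise for $\psi$; since $-\tfrac{2\lambda}{a_{22}}\phi\ge 0$ with strictly positive slack in the interior, the right-hand side is $\ge 0$ once the $o(1)$ error is absorbed, and the conditions $\phi=\psi=0$ at $s=h_\infty$ together with the free-boundary relation (automatically met, the domain being pinned at $h_\infty$) finish the verification. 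I expect the main obstacle to be exactly this absorption step: the interior slack $-\tfrac{2\lambda}{a_{22}}\phi$ degenerates like $\mathrm{dist}(\cdot,\partial\Omega_{h_\infty})$ near the boundary while the coefficient errors are only uniformly $o(1)$, so one must either exploit that those errors also vanish on the boundary (they carry factors $\phi_s$, $\phi_{ss}$, $h'$) or, more robustly, replace $(\phi,\psi)$ by $(1+\varepsilon)(\phi,\psi)$ and restrict to $t\ge T^\ast$; a secondary point needing care is justifying the nonpositivity of $\lambda$, which depends on the exact definition of the admissible set $\mathcal A$ and on the normalization used in the minimization.
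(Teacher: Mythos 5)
Your proposal follows the same skeleton as the paper's own proof: substitute the minimizer into the Euler--Lagrange system, treat the eigenvalue term as the residual against \cref{DFE ODE}, and then transfer the conclusion to \cref{modified free boundary SEIS4} by letting the coefficients tend to their $t\to\infty$ limits. The difference is that you actually carry out the steps the paper only asserts. You perform the row rescaling by $a_{22}$ and $a_{12}$ needed to pass from $\mathcal L$ back to the operators of \cref{DFE ODE}, and you use the sign convention appropriate for an upper barrier, namely residual $\le 0$, which forces you to argue $\lambda\le 0$ and $\phi,\psi\ge 0$. The paper instead writes $\mathcal L(\phi,\psi)=2\lambda(\phi,\psi)\ge 0$ and calls this a supersolution; testing the Euler--Lagrange relation (with $\mathcal L$ as defined, i.e.\ with $+d\Delta$ on the diagonal) against $(\phi,\psi)$ gives $2\lambda\int(\phi^{2}+\psi^{2})=-\mathcal E[\phi,\psi]\le 0$ by coercivity, so your sign is the internally consistent one. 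For the second assertion the paper's entire argument is one sentence --- \cref{DFE ODE} is the limit of \cref{modified free boundary SEIS4}, which is ``continuous in $t$'' --- whereas you supply the content that sentence would need: domination of the reaction terms via $S\le A/\mu_{1}$ and absorption of the $o(1)$ coefficient errors.

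Two points remain open in both arguments, and you are right to flag the first: the interior slack $-2\lambda\phi/a_{22}$ degenerates at $s=h_{\infty}$ while the coefficient errors are only uniformly small, so the absorption needs either the boundary decay of the error terms (they carry factors of $h'$, $\phi_{s}$, $\phi_{ss}$) or the $(1+\varepsilon)$ dilation restricted to $t\ge T^{*}$, as you suggest. The second is the nonnegativity of the first eigenpair, which you attribute to the cooperative structure but do not prove; since every sign in the argument passes through $\phi,\psi\ge 0$, this should be established explicitly (for instance by replacing a minimizer with $(\lvert\phi\rvert,\lvert\psi\rvert)$, which does not increase $\mathcal E$ precisely because the off-diagonal coefficient $a_{12}a_{22}$ is positive). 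With those two items supplied your proof closes, and it is strictly more rigorous than the two-line verification given in the paper.
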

\begin{proof}
	If we substitue $(\phi, \psi)$ to \cref{euler lagrange},
	\begin{equation*}
		\left \{ \begin{aligned}
			&\mathcal{L}\begin{pmatrix}
				\phi\\
				\psi
			\end{pmatrix}=2\lambda\begin{pmatrix}
				\phi\\
				\psi
			\end{pmatrix}\geq 0, \qquad &&\text{in } \Omega_\infty,\\
			&\begin{pmatrix}
				\phi\\
				\psi
			\end{pmatrix}\geq 0, &&\text{on } \partial \Omega_\infty.
		\end{aligned} \right.
	\end{equation*}
	Thus, $(\phi, \psi)$ is the super solution of the equations \cref{DFE ODE}. Since the equations \cref{modified free boundary SEIS4} with \cref{modified free boundary SEIS4 cond} are continuous with respect to $t$ and the equations \cref{DFE ODE} with \cref{modified free boundary SEIS4 cond} are the limit of the equations \cref{modified free boundary SEIS4} with \cref{modified free boundary SEIS4 cond}, $(\phi, \psi)$ is also the super solution of the equations \cref{modified free boundary SEIS4} with \cref{modified free boundary SEIS4 cond}.
\end{proof}

\begin{lemma}
	If $\mathcal{R}_0^{\max}(=\frac{\alpha A}{\min_{i=1,2,3}\{\mu_i\}}\min\{r_1+\mu_2,r_2+\mu_3\})<1$, the linear operator $\mathcal{L}$ is negative semi--definite.
\end{lemma}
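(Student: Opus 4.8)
The plan is to reduce negative semi-definiteness of $\mathcal{L}$ to an elementary fact about a $2\times 2$ symmetric matrix. Since $\mathcal{L}$ is formally self-adjoint on $H_0^1(\Omega_{h_\infty})^2$ — its two off-diagonal entries are the same constant $a_{12}a_{22}$, and each diagonal block is a real multiple of the self-adjoint operator $d\Delta+(\,\cdot\,)$ under the zero Dirichlet condition on $\partial\Omega_{h_\infty}$ — being negative semi-definite is equivalent to $\langle\mathcal{L}(\phi,\psi)^{T},(\phi,\psi)^{T}\rangle_{L^2}\le 0$ for every $(\phi,\psi)\in H_0^1(\Omega_{h_\infty})^2$. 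First I would integrate by parts, using $\phi=\psi=0$ on $\partial\Omega_{h_\infty}$, to obtain
\[
	\big\langle \mathcal{L}(\phi,\psi)^{T},(\phi,\psi)^{T}\big\rangle_{L^2}
	= -\int_{\Omega_{h_\infty}}\!\big(d\,a_{22}\lvert\nabla\phi\rvert^{2}+d\,a_{12}\lvert\nabla\psi\rvert^{2}\big)
	+ \int_{\Omega_{h_\infty}}\!\big(a_{11}a_{22}\phi^{2}+2a_{12}a_{22}\phi\psi+a_{12}a_{21}\psi^{2}\big).
\]
Because $d>0$, $a_{22}=\beta_1>0$ and $a_{12}=(1-p)\alpha A/\mu_1\ge 0$, the gradient integral is nonnegative, so it suffices to show that the pointwise quadratic form $Q(\phi,\psi)=a_{11}a_{22}\phi^{2}+2a_{12}a_{22}\phi\psi+a_{12}a_{21}\psi^{2}$ is negative semi-definite, i.e.\ that
\[
	M=\begin{pmatrix} a_{11}a_{22} & a_{12}a_{22}\\ a_{12}a_{22} & a_{12}a_{21}\end{pmatrix}
\]
satisfies $M\preceq 0$.

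I would verify $M\preceq 0$ via the two scalar conditions $-a_{11}a_{22}\ge 0$ and $\det M\ge 0$. The first is immediate, since $a_{11}=-(\beta_1+r_1+\mu_2)<0$ while $a_{22}=\beta_1>0$. For the determinant, write $\det M=a_{12}a_{22}\,(a_{11}a_{21}-a_{12}a_{22})$; if $p=1$ then $a_{12}=0$ and $\det M=0$, so assume $0\le p<1$, whence $a_{12}a_{22}>0$ and it is enough to prove $a_{11}a_{21}\ge a_{12}a_{22}$. This is the one place the hypothesis enters: $\mathcal{R}_0^{\max}<1$ means $\alpha A<\min_{i}\{\mu_i\}\min\{r_1+\mu_2,r_2+\mu_3\}\le\mu_1(r_2+\mu_3)$, hence $\alpha A/\mu_1<r_2+\mu_3$. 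This already forces $a_{21}=p\alpha A/\mu_1-(r_2+\mu_3)<0$, so $a_{11}a_{21}=(\beta_1+r_1+\mu_2)\big(r_2+\mu_3-p\alpha A/\mu_1\big)$; and writing $r_2+\mu_3-p\alpha A/\mu_1=(r_2+\mu_3-\alpha A/\mu_1)+(1-p)\alpha A/\mu_1>(1-p)\alpha A/\mu_1$, we get $a_{11}a_{21}>(\beta_1+r_1+\mu_2)(1-p)\alpha A/\mu_1\ge\beta_1(1-p)\alpha A/\mu_1=a_{12}a_{22}$. Thus $\det M\ge 0$ in all cases, so $M\preceq 0$, $Q\le 0$ pointwise, and therefore the displayed pairing is $\le 0$, which is the claim.

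The routine ingredients are the integration by parts and the Sylvester-type criterion for a $2\times 2$ quadratic form; the single model-specific step is extracting $\alpha A/\mu_1<r_2+\mu_3$ from $\mathcal{R}_0^{\max}<1$ and then establishing the bilinear inequality $a_{11}a_{21}\ge a_{12}a_{22}$, so that is where I expect to have to be careful — in particular, treating the degenerate case $p=1$ (where $a_{12}=0$ and $M$ is diagonal with a single nonpositive entry) on its own. I do not anticipate any further obstacle; note that the coercivity and lower-semicontinuity lemmas are not needed here, since once the quadratic form is written out the statement is purely algebraic.
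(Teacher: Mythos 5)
Your proof is correct and follows essentially the same route as the paper's: both reduce negative semi-definiteness to the obvious sign conditions on the diagonal together with the determinant inequality $a_{11}a_{21}\ge a_{12}a_{22}$, and both derive that inequality from $\alpha A/\mu_1 < r_2+\mu_3$ via the same regrouping $(\beta_1+r_1+\mu_2)(r_2+\mu_3)-(\beta_1+p(r_1+\mu_2))\,\alpha A/\mu_1 \ge (\beta_1+r_1+\mu_2)\bigl(r_2+\mu_3-\alpha A/\mu_1\bigr)\ge 0$. Your write-up is in fact slightly more careful than the paper's (it makes the integration by parts and the degenerate case $p=1$ explicit, and avoids the paper's unnecessary and incorrectly stated side inequality $\frac{r_1+\mu_2}{p}<r_1+\mu_2$), but the substance is identical.
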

\begin{proof}
	It is obvious that
	\begin{equation*}
		\beta_1+r_1+\mu_2\geq 0,
	\end{equation*}
	and 
	\begin{equation*}
		p \alpha \frac{A}{\mu_1}-(r_2+\mu_3)\leq 0.
	\end{equation*}
	If $\mathcal{R}_0^{\max}<1$,
	\begin{equation*}
		\frac{\alpha A}{\mu_1}<r_1+\mu_2, \text{ and } \frac{\alpha A}{\mu_1}<r_2+\mu_3.
	\end{equation*}
	Since $p<1$, 
	\begin{equation*}
		\frac{\alpha A}{p \mu_1}<\frac{r_1+\mu_2}{p}<r_1+\mu_2,
	\end{equation*}
	so
	\begin{equation*}
		\frac{\alpha A}{\mu_1}<(r_1+\mu_2)p.
	\end{equation*}
	To show that $\mathcal{L}$ is negative definite, we have to show that
	\begin{equation*}
		-(\beta_1+r_1+\mu_2)(p\alpha \frac{A}{\mu_1}-(r_2+\mu_3))-\beta_1(1-p)\alpha\frac{A}{\mu_1}\geq 0.
	\end{equation*}
	Here,
	\begin{equation*}
	\begin{aligned}
		&-(\beta_1+r_1+\mu_2)(p\alpha \frac{A}{\mu_1}-(r_2+\mu_3))-\beta_1(1-p)\alpha\frac{A}{\mu_1}\\
		&=\beta_1(r_2+\mu_3)-(r_1+\mu_2)p\frac{\alpha A}{\mu_1}+(r_1+\mu_2)(r_2+\mu_3)-\beta_1(1-p)\frac{\alpha A}{\mu_1}\\
		&=(\beta_1+r_1+\mu_2)(r_2+\mu_3)-((r_1+\mu_2)p+\beta_1)\frac{\alpha A}{\mu_1}\\
		&\geq (\beta_1+r_1+\mu_2)(r_2+\mu_3-\frac{\alpha A}{\mu_1})\geq \beta_1+r_1+\mu_2\geq 0.
	\end{aligned}
	\end{equation*}
\end{proof}

The below theorem shows the convergent speed of the solution of \cref{DFE ODE}.
\begin{theorem}
	If $\mathcal{R}_0<1$, then for every $(u_0,v_0)\in L^2(\Omega_\infty)^2$, we have 
	\begin{equation*}
		\left| e^{\lambda_1t} \begin{pmatrix}
			u(s,t)\\
			v(s,t)
		\end{pmatrix} -a_1 \begin{pmatrix}
			\phi(s)\\
			\psi(s)
		\end{pmatrix} \right|  \leq C \cdot e^{-(\lambda_2-\lambda_1) t}
	\end{equation*}
	where $\lambda_1$ and $\lambda_2$ are eigenvalues obtained in the previous therorems, $a_1$ is the coefficient from eigenfunction expansion for initial, and $(\phi(s),
		\psi(s))$
	is the corresponding eigenfunction associated with $\lambda_1$.

\end{theorem}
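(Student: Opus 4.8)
The plan is to reduce the long-time behaviour of $(u,v)$ on the fixed domain $\Omega_{h_\infty}$ to the spectral decomposition of the elliptic operator $\mathcal L$ attached to the energy $\mathcal E$, and then to upgrade the resulting $L^2$ decay to the claimed pointwise bound.

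First I would pass to the linearized flow. By \cref{straightening lem2} and \cref{equation changing} the pair $(u,v)$ solves \eqref{modified free boundary SEIS4}--\eqref{modified free boundary SEIS4 cond} on $\Omega_{h_\infty}$. Since $\mathcal R_0<1$, \cref{theorem 4.2} gives $\lVert E(\cdot,t)\rVert_C\to0$, $\lVert I(\cdot,t)\rVert_C\to0$, $h_\infty<\infty$, $h'(t)\to0$ and $S(r,t)\to A/\mu_1$, hence $X\to1$, $Y\to0$, $W\to0$, $Z\to\xi$ as $t\to\infty$. Writing the right-hand side of \eqref{modified free boundary SEIS4} as $\mathcal L'\binom{u}{v}$ plus a remainder that collects the quadratic terms $(1-p)\alpha vw$ and $p\alpha vw$, the drift caused by $S-A/\mu_1$, and the coefficient errors $(X-1)\,d\Delta_s$ and $Yd+Zh'+Wd$ acting on the (uniformly $C^{1+\alpha}$-bounded, by \cref{lemma 4.5}) solution, each remainder is $o(1)$ relative to $\binom{u}{v}$. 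Thus, up to this perturbation, $(u,v)$ follows the linear semigroup generated by $\mathcal L'$, which through the symmetrization built into $\mathcal E$ (multiplying the two rows by $a_{22}>0$ and $a_{12}>0$) is conjugate to the self-adjoint operator $\mathcal L$ of \eqref{operator L}.

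Next I would invoke the spectral theory for $\mathcal L$. By the coercivity lemma and the lower-semicontinuity lemma, together with \cref{theorem 4.13} and its Euler--Lagrange characterization, $-\mathcal L$ has a discrete real spectrum $0<\lambda_1<\lambda_2\le\lambda_3\le\cdots\to\infty$; positivity of $\lambda_1$ is exactly where $\mathcal R_0^{\max}<1$ enters, via the negative-semidefiniteness lemma combined with coercivity, which forces the associated quadratic form to be strictly positive on $H^1_0(\Omega_{h_\infty})^2$. The normalized eigenfunctions $\{(\phi_k,\psi_k)\}$ form an orthonormal basis of the (weighted) Hilbert space in which $\mathcal L$ is self-adjoint, and $(\phi,\psi)=(\phi_1,\psi_1)$. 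Expanding $(u_0,v_0)=\sum_k a_k(\phi_k,\psi_k)$ with $\sum_k a_k^2<\infty$ and $a_1$ the stated leading coefficient, the solution of the limiting linear flow is $\sum_k a_k e^{-\lambda_k t}(\phi_k,\psi_k)$, so that
\[
e^{\lambda_1 t}\binom{u}{v}-a_1\binom{\phi}{\psi}=\sum_{k\ge 2}a_k\,e^{-(\lambda_k-\lambda_1)t}(\phi_k,\psi_k),
\]
whose $L^2$-norm is at most $\big(\sum_{k\ge2}a_k^2\big)^{1/2}e^{-(\lambda_2-\lambda_1)t}\le\lVert(u_0,v_0)\rVert_{L^2}e^{-(\lambda_2-\lambda_1)t}$. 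Finally, interior-in-time parabolic $L^p$ and Schauder estimates on the unit time slab $[t,t+1]$ convert this $L^2$ bound into the pointwise estimate $\big|e^{\lambda_1 t}(u,v)-a_1(\phi,\psi)\big|\le C\,e^{-(\lambda_2-\lambda_1)t}$, with $C$ depending only on the fixed data.

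The main obstacle is making the perturbation step quantitative: one must show that the genuine (nonlinear, non-autonomous) solution and the linear one differ by a term that itself decays at least like $e^{-(\lambda_2-\lambda_1)t}$. This calls for a Duhamel representation for the difference together with a Gr\"onwall argument absorbing the quadratic terms (of size $O(\lVert(u,v)\rVert^2)$, hence of order $e^{-2\lambda_1 t}$ once the leading rate is known) and the linear perturbations $(S-A/\mu_1)\binom{u}{v}$, $(X-1)d\Delta_s\binom{u}{v}$, and the transport correction. In particular one must first upgrade \cref{theorem 4.2}, which only yields convergence without a rate, to genuine exponential rates $e^{-\lambda_1 t}$ for $\lVert(u,v)\rVert$, for $S-A/\mu_1$, and for $h-h_\infty$ and $h'$ --- for instance by a spectral-gap energy estimate for the linearized problem followed by a bootstrap. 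Once these rates are available the remainder is of strictly higher order, the spectral gap $\lambda_2-\lambda_1$ dictates the asymptotics, and the proof is complete.
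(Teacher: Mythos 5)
Your proposal follows the same basic route as the paper --- an eigenfunction expansion for the limiting linear elliptic system, with the spectral gap $\lambda_2-\lambda_1$ dictating the rate --- but you spell out far more of the structure than the paper does (the paper's proof simply writes the solution as $a_1 e^{-\lambda_1 t}(\phi,\psi)$ plus an $e^{-\lambda_2 t}$ correction and stops). The decisive step, however, is exactly the one you label ``the main obstacle'' and then defer: the pair $(u,v)$ solves the nonlinear, non-autonomous system \cref{modified free boundary SEIS4}, not the autonomous linear system generated by $\mathcal{L}'$, so the clean expansion $\sum_k a_k e^{-\lambda_k t}(\phi_k,\psi_k)$ does not apply to it directly. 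To close the argument you would need (i) an a priori exponential decay rate for $\lVert (u,v)\rVert$, for $S-A/\mu_1$, and for $h-h_\infty$ and $h'$ (the paper's \cref{theorem 4.2} gives only qualitative convergence with no rate), and (ii) the Duhamel--Gr\"onwall estimate showing that the resulting remainder decays strictly faster than $e^{-\lambda_2 t}$. Neither is carried out in your proposal, nor in the paper, so as written the estimate is established only for the limiting linear flow rather than for $(u,v)$ itself.

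A secondary point worth flagging: the symmetrization that produces $\mathcal{L}$ from $\mathcal{L}'$ multiplies the two rows by the different constants $a_{22}$ and $a_{12}$, i.e.\ $\mathcal{L}=D\mathcal{L}'$ with $D=\operatorname{diag}(a_{22},a_{12})$, which is not a similarity transformation. The Euler--Lagrange eigenvalues solving $\mathcal{L}(\phi,\psi)^{T}=2\lambda(\phi,\psi)^{T}$ are therefore not the decay exponents of the semigroup generated by $\mathcal{L}'$; one needs the generalized problem $\mathcal{L}u=\lambda Du$, equivalently self-adjointness of $\mathcal{L}'$ in the $D$-weighted inner product, and the expansion must be orthonormal in that weighted space. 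You gesture at this (``the weighted Hilbert space in which $\mathcal{L}$ is self-adjoint''), but the identification of the $\lambda_1,\lambda_2$ appearing in the statement with the minimizer's Lagrange multipliers requires this correction to be made explicit for the rate $\lambda_2-\lambda_1$ to be the right one.
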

\begin{proof}
	Using the previous results on the eigenvalue problems, put
	\begin{equation*}
		e^{\lambda_1t}\begin{pmatrix}
u(s,t)\\
v(s,t)	
\end{pmatrix}
=a_1\begin{pmatrix}
			\phi(s)\\
			\psi(s)
		\end{pmatrix}+e^{-(\lambda_2-\lambda_1)t}\begin{pmatrix}
			\varphi(s) \\
			\chi(s)
		\end{pmatrix},
	\end{equation*}
	where $\lambda_1>\lambda_2$ are the biggest eigenvalues and $\begin{pmatrix}
			\phi(s)\\
			\psi(s)
		\end{pmatrix}$ and $\begin{pmatrix}
			\varphi(s) \\
			\chi(s)
		\end{pmatrix}$ are corresponding eigenfunctions respectively.
	Then, the solution of \cref{modified free boundary SEIS4} converges at the rate $O(e^{-(\lambda_2-\lambda_1)})$.
\end{proof}

\nocite{Lin_2007}
\nocite{Kim_2013}
\nocite{Kim_2008}
\nocite{MENG_2013}
\nocite{Yihong_2010}
\nocite{Alaa_2013}
\nocite{Chen_2000}
\nocite{Arino_2008}
\nocite{Mimura_1985}
\nocite{Mimura_1987}
\nocite{Mimura_1986}
\nocite{Friedman_1964}
\nocite{Friedman_1982}

%
%

\end{document}